\newcommand{\cV}{\mathcal{V}}
\newcommand{\cA}{\mathcal{A}}
\newcommand{\cK}{\mathcal{K}}
\newcommand{\cU}{\mathcal{U}}
\newtheorem{lemma}{Lemma}
\newenvironment{proof}{\par\noindent\textbf{Proof:}}{\hfill\ensuremath{\Box}}
\crefname{lemma}{Lemma}{Lemmas}
\newtheorem{theorem}{Theorem}
\newtheorem{corollary}[theorem]{Corollary}
\title{An Efficient Approach to Distributionally Robust Network Capacity Planning}
\author[1]{Trivikram Dokka}
\author[1]{Francis Garuba}
\author[2]{Marc Goerigk}
\author[1]{Peter Jacko}
\affil[1]{Department of Management Science, Lancaster University, United Kingdom}
\affil[2]{Network and Data Science Management, University of Siegen, Germany}
\date{}
\begin{document}

\maketitle

\begin{abstract}
In this paper, we consider a network capacity expansion problem in the context of telecommunication networks, where there is uncertainty associated with the expected traffic demand. We employ a distributionally robust stochastic optimization (DRSO) framework where the ambiguity set of the uncertain demand distribution is constructed using the moments information, the mean and variance. The resulting DRSO problem is formulated as a bilevel optimization problem. We develop an efficient solution algorithm for this problem by characterizing the resulting worst-case two-point distribution, which allows us to reformulate the original problem as a convex optimization problem.

In computational experiments the performance of this approach is compared to that of the robust optimization approach with a discrete uncertainty set. The results show that solutions from the DRSO model outperform the robust optimization approach on highly risk-averse performance metrics, whereas the robust solution is better on the less risk-averse metric.
\end{abstract}

\noindent\textbf{Keywords:} network design; robust optimization; optimization in telecommunications; distributionally robust stochastic optimization

\section{Introduction}
\label{sec:literature}
Uncertainty has been recognized as a reality of our day-to-day living where choices are often made under partial or unknown information. Hence mitigating against uncertainty in decision making has always been a key business driver. In operations research, frameworks have been developed that help to address decision making under uncertainty in two broad areas, namely the stochastic and the robust optimization approaches.
%\st{Handling uncertainty in decision problems has been broadly developed under two areas of methodology, namely the approaches of robust optimization and stochastic optimization.} 

In the stochastic approach, we assign probabilities to the random variables by assuming that the probability distribution of these variables or uncertain data is known or can be accurately estimated from historic data \cite{Chen2008}. A drawback of this approach is that  in real life, the probabilities are often not available or correctly estimated.
%\st{A drawback of the stochastic programming approach is that is usually requires knowledge of a probability distribution describing the uncertainty}.
Robust optimization on the other hand addresses the problem of data uncertainty by assuming that the data lie within a closed set \cite{Ben_Tal_1999}. It provides an uncertainty immune solution for the worst case of the uncertain data set. Whereas in the robust optimization approach, we optimize the worst-case objective, in the stochastic optimization approach we optimize relevant statistical measures, e.g. expectation, median, CVaR etc.
Ignoring the probability information has been a main criticism of the robust approach which may produce an overly conservative solution \cite{Chen2007}. Despite the limitations of these two approaches, network design problems under demand uncertainty using these approaches have been frequently considered. Network design has a strategic role within the planning function of most organizations. The task is to ensure the highest quality of design while efficiently balancing the requirement of just enough capacity with the capacity investment cost. \cite{Magnanti1984,Minoux1989} provide a survey of the network design models as well as a unifying framework for many of such models. Network design has found application in many areas, such as transportation, supply chain, communications, and social networks.

% \st{In this paper, we investigate a distributionally robust stochastic stochastic (DRSO) approach, to a multi-commodity network design problem, where the probability distribution itself is affected by ambiguity. This combination of robust and stochastic optimization for a network capacity planning problem is then analyzed with respect to time of execution, resource usage and applicability in a real world setting.}

\cite{Thapalia2012} showed that stochastic just like robust optimization models are often NP-hard, and even the deterministic network design model itself may be difficult to solve for problems of industrial size. \cite{Nemirovski2009} investigated the heuristic methods based on Monte Carlo sampling techniques, stochastic approximation (SA) and sample average approximation (SAA), in their attempt to find a robust stochastic solution. 
On the other hand, \cite{Bai2014} compared the result of the deterministic model to the stochastic model using a standard commercial solver while they proposed the use of heuristics or relaxation methods to solve large-scale problems. 
\cite{Sun2017} determined the quality of the deterministic solutions for a stochastic multi-commodity network design problem and conclude that this solution can be used to find a good heuristic solution to the stochastic multi-commodity network design model. The deterministic solution is hence contained in the stochastic solution and using it as a skeleton improves results with as much as 97\% of the initial loss recovered. The framework consists of solving a deterministic network design problem, extracting the discrete variables, fixing them in the stochastic model and then solving a stochastic linear problem. 

\cite{Santoso2005} solved a supply chain network design problem using sample average approximation and combined this with an accelerated Benders decomposition algorithm to solve a  problem with a large number of scenarios. \cite{Bidhandi2011}  also study a supply chain network, which was solved using SAA combined with a Benders decomposition algorithm. \cite{Kilic2014} formulate a distribution network as a two-stage design problem and solve this using a MIP methodology called the TSMIP.

Literature abounds on the application of robust optimization to network planning and design following the seminal work of \cite{Soyster_1973} and \cite{Ben-Tal1998,Laguna_1998,Kouvelis_1997,El_Ghaoui_1998,Ben_Tal_1999,Bertsimas_2003,Bertsimas_2004} among many others. Of particular importance is the work of \cite{Ben_Tal_2004} which introduced the adjustable robust framework that addresses two-stage decision problem where the network planning and design problem is situated. \cite{Atamturk2007} applied this to network flow design problem under demand uncertainty, a two stage problem which allows the control of the conservatism of the solution via a parameterized budget uncertainty set, \cite{Ord_ez_2007} studied the network capacity problem under both demand and travel time uncertainty for a multi-commodity flow problem with single source and sink per commodity while \cite{Ouorou2007} introduced affine routing concept in their robust capacity planning model under demand uncertainty using path-based formulation and of recent, \cite{Garuba2019b} addressed non-linearity of cost function in robust network capacity expansion problem. Also, there have been work towards exact solution with \cite{Mattia_2012} being the forerunner while others are \cite{Zeng2013,Bertsimas_2015,Pessoa_2015,Ayoub_2016} using different types of decomposition algorithms.

In this paper, to address the drawback of the two approaches, we leverage on their respective strengths  and investigate a distributionally robust stochastic stochastic (DRSO) approach to a multi-commodity network design problem, where the probability distribution itself is affected by ambiguity. This combination of robust and stochastic optimization for a network capacity planning problem is then analyzed with respect to time of execution, resource usage and applicability in a real world setting

This approach has found increasing application in diverse areas/field since its introduction by \cite{Scarf} in his min-max solution of inventory problem. 
\cite{Popescu2007} used this approach in the mean-covariance of the uncertain data distribution to derive a robust solution approach for a max-min and min-max stochastic problem without recourse.  \cite{Delage2010} in their work on distributionally robust stochastic programming develop a model that combines the distribution and moment of the uncertain data. They show that their model outperforms the naive approximated stochastic model proposed by \cite{Popescu2007}. However, a polynomial-time algorithm for the larger range of utility functions considered in \cite{Popescu2007} was beyond the scope of their work. \cite{Goh2010} on the other hand develop a tractable approximation to a distributionally robust optimization problem. Unlike most min-max stochastic programs, the expectation of recourse variable was included in their model.

\cite{Mak2013} applied the distributionally robust framework to solve an electric vehicle battery swapping station location problem. They also validate the accuracy of this solution against that of the SAA and were able to show that it provides a good approximation to the SAA. \cite{Cheng2016} proposed a new reformulation and approximation for solving the distribution robust shortest path problem building on their earlier work \citep{Cheng2013}.
A unifying framework was proposed by \cite{Wiesemann2014} for modeling and solving distributionally robust optimization problems based on standardized ambiguity sets which encompasses many sets from the literature. They identify conditions under which this framework is tractable and develop a tractable conservative approximation for problems that violate these conditions. \cite{Bertsimas2018} developed a framework for solving adaptive distributionally robust linear optimization problem. 

%\todo{this is not a good literature review. it just goes through a list of papers that have something to do with DRSO. there is no system visible. what has been done for network design?}

This paper presents the following contributions: We consider an uncertain network capacity expansion problem and develop an efficient algorithm {to its distributionally robust counterpart by characterizing the resulting two point distribution using Richter-Rogonski's theorem \cite{Richter_1957,Rogosinski1958}}. We compare the quality of solutions from this algorithm to the solutions obtained from a discrete robust approach. It is observed from our numerical experiments that solutions found by the DRSO algorithm outperform solutions from the robust optimization approach on highly risk-averse performance metrics.

The model from the literature that is the most similar to ours, but with a flow cost, can be found in \cite{Nakao2017}. While their approach resulted in a computationally challenging solution approach (including the discretization of continuous values), the model presented here is considerably easier to solve.

The rest of this paper is organized as follows. In \autoref{sec:2}, we describe the general distributionally robust stochastic optimization concept. \autoref{sec:3} explains the robust network design problem we consider. In \autoref{sec:Algo_DRSO}, our efficient formulation of the distributionally robust problem is developed. We first focus on the single-commodity case and then extend results to the multi-commodity case. In \autoref{sec:Exp}, the experimental setup and computational result are discussed.  Finally, \autoref{sec:conclude} concludes our work and points out future research directions.

\section{Distributionally Robust Stochastic Optimization}
\label{sec:2}

Distributionally robust stochastic optimization is a data-driven modeling methodology for optimization under uncertainty. 
It encompasses aspects from both robust and stochastic optimization, frameworks that are
complementary to each other though differing in their approaches to addressing the uncertainty \cite{Ben-Tal2009}.

Robust optimization provides a framework to immunize against uncertainty that is believed to lie within a closed and bounded set known as the uncertainty set, while the stochastic optimization framework assumes that the probability distribution of these parameter uncertainty is known. However, in real world application, "true" distribution knowledge is never completely known but at most can only be estimated from available data \cite{Shapiro2002}. On the other hand, one of the major attractiveness which has resulted in the explosion of research into the robust framework is its tractability to a wide range of challenging problems. Nevertheless, this methodology also faced criticism for its inability to factor in the distribution knowledge of the underlying uncertain data, leading to overly conservative solutions \cite{Wiesemann2014,Chen2007}.

Uncertainty can be viewed as \textit{risk} when the probability distribution is known or as an \textit{ambiguity} otherwise \cite{Bertsimas2018}. Neither of these two approaches is suitable to deal with ambiguity from the perspective of decision theory. However, combining the uncertainty set of the robust approach with the probability distribution from the stochastic approach produces a more potent methodology that is able to handle both risk and ambiguity. Hence, under the distributionally robust stochastic framework, the probability distribution is also subjected to uncertainty. The aim is to find a decision such that for any possible probability distribution from the ambiguity set, the stochastic constraints of the model are satisfied.

In the era of growing data-driven applications, moments that constitute the ambiguity are estimated in the face of limited historical data. \cite{Scarf} was the first to apply this methodology in his min-max proposal to the newsvendor problem where the "true" distribution, though not known completely, is only characterized by its mean and standard deviation and belongs to a class of probability distributions with the same mean and standard deviation. The distributionally robust stochastic approach hence seeks to maximize the expectation by considering the worst case distribution in this probability distributions class also known as the ambiguity set.

\section{Problem Description}
\label{sec:3}

Planning for capacities to be added in networks is a major strategic problem in most telecommunication organizations. Usually, this decision is made under uncertainty of the future traffic demand. As with most strategic roles, this often involves large capital expenditure investment. Hence, in this paper, we consider a multi-commodity network demand flow problem, where additional capacities are added to accommodate uncertain traffic demands while minimizing the total cost involved subject to design constraints.

\subsection{Basic Network Expansion Problem}

The problem can be represented by a directed graph $G=(\cV, \cA)$ which denotes the network of interest. Each of the arcs $a \in \cA$ has an original capacity $u_a$ which can be upgraded at a cost $c_a$ per incremental unit of capacity. A set of commodities $\cK=\{1,\ldots,K\}$ need to be routed across the network with each commodity $k \in \cK$ consisting of a demand $d^k \ge 0$, a source node $s^k\in\cV$, and a sink node $t^k\in\cV$. Additionally, let $\phi$ be the cost of not satisfying one unit of demand over the planning horizon (i.e., by outsourcing it). Under complete demand certainty, the nominal network capacity expansion problem can then be formulated as:
\begin{align}
\min\ &\sum_{a\in \cA} c_a x_a + \phi \sum_{k\in\cK} \left[ d^k - \sum_{a\in\delta^-(t^k)} f^k_a + \sum_{a\in\delta^+(t^k)} f^k_a \right]_+ \label{con1a}\\
\text{s.t. } &  \sum_{a\in \delta^-(v)} f^k_a - \sum_{a\in \delta^+(v)} f^k_a \ge
0
& \forall k\in \cK, d \in \cU, v\in \cV\setminus\{s^k,t^k\} \label{con2a} \\
& \sum_{k\in \cK} f^k_a \leq u_a + x_a & \forall   a\in \cA  \label{con3a}\\
& f^k_a \ge 0 & \forall k\in\cK,d\in\cU,a\in\cA \label{con4a} \\
& x_a \geq 0 & \forall a\in\cA \label{con5a}
\end{align}
Here, $[y]_+$ denotes the positive part $\max\{0,y\}$, while $\delta^+(v)$ and $\delta^-(v)$ are the sets of the outgoing and incoming arc at node $v \in \cV$, respectively. Variables $f^k_a$ denote the flow of commodity $k\in\cK$ along edge $a\in\cA$, while $x_a$ models the amount of capacity being added to arc $a$. The objective function~\eqref{con1a} is to minimize the sum of capacity expansion cost and outsourcing costs. Constraints~\eqref{con2a} are a variant of flow constraints, where we allow an arbitrary amount of flow to leave the source node $s^k$. Through the objective, only the flow arriving in $t^k$ is counted. Finally, Constraints~\eqref{con3a} which model the capacity on each edge ensure that amount of flow does not exceed the sum of initial and added capacity.

 \subsection{Robust Problem Formulation}
 \label{sec:prob_form}
 
 Since the actual demand values $\pmb{d}=(d^1,\ldots,d^K)\in\mathbb{R}^K_+$ are uncertain, we assume here that they can take any value in a predetermined uncertainty set $\cU$, which can be represented as  $\cU = \{\pmb{d}^1,\ldots,\pmb{d}^N\}$. The network capacity expansion problem using the robust optimization framework is to minimize the cost of capacity investment and the worst-case costs of outsourced (unsatisfied) demand while satisfying the network constraints.
 The robust network design for uncertainty set $\cU$ is hence:
 \begin{align}
 \min\ &\sum_{a\in \cA} c_a x_a + \max_{\pmb{d}\in\cU} \phi \sum_{k\in\cK} \left[ d^k - \sum_{a\in\delta^-(t^k)} f^k_a(\pmb{d}) + \sum_{a\in\delta^+(t^k)} f^k_a(\pmb{d}) \right]_+ \label{con1}\hspace*{-15mm}\\
 \text{s.t. } &  \sum_{a\in \delta^-(v)} f^k_a(\pmb{d}) - \sum_{a\in \delta^+(v)} f^k_a(\pmb{d}) \ge
 0
 & \forall k\in \cK, \pmb{d} \in \cU, v\in \cV\setminus\{s^k,t^k\} \label{con2} \\
 & \sum_{k\in \cK} f^k_a(\pmb{d}) \leq u_a + x_a & \forall  \pmb{d} \in \cU, a\in \cA  \label{con3}\\
 & f^k_a(\pmb{d}) \ge 0 & \forall k\in\cK,\pmb{d}\in\cU,a\in\cA \label{con4} \\
 & x_a \geq 0 & \forall a\in\cA \label{con5}
 \end{align}
 with a scenario $\pmb{d}$ being the demand vector over all commodities.
 As before, $\phi$ is a penalty parameter for uncovered demand (e.g., outsourcing costs). We can send as much flow as we like, but flow cannot appear outside the source, and sending insufficient flow creates a penalty. The constraints~\eqref{con1a}-\eqref{con5a} have been updated to take uncertainty into account, hence the worst case is considered in constraints~\eqref{con1}.
 The positive part $[\cdot]_+$ in the objective can be easily linearized using additional variables $\tau^k \ge 0$ and constraints $\tau^k \ge d^k - \sum_{a\in\delta^-(t^k)} f^k_a(\pmb{d}) + \sum_{a\in\delta^+(t^k)} f^k_a(\pmb{d})$ for all $k\in\cK$. The inner maximum can be linearized in an analogous way.

\section{Distributionally Robust Stochastic Problem Formulation}
\label{sec:Algo_DRSO}

\subsection{Single-Commodity Case}

The single commodity network design problem with outsourcing can be modeled as below. Notice that we drop the subscript $k$ from $d^k$, $s^k$ and $t^k$, because we assume a single commodity problem, i.e., $K=1$; 
\begin{align}
\min\ &\sum_{a\in \cA} c_a x_a + \phi \max_{\mathbb{P} \in \mathcal{P}} \mathbb{E}_{\mathbb{P}}\big{[} d-\tilde{d}\big{]}_+\\
\text{s.t. } &  \sum_{a\in \delta^-(v)} f_a - \sum_{a\in \delta^+(v)} f_a = 0
& \forall  v\in \cV\backslash \{s,t\} \\
 &  \sum_{a\in \delta^-(t)} f_a - \sum_{a\in \delta^+(t)} f_a = \tilde{d}
& \\
 &  \sum_{a\in \delta^-(s)} f_a - \sum_{a\in \delta^+(s)} f_a = -\tilde{d}
& \\
& f_a \leq u_a + x_a & \forall  a\in \cA \\
& f_a \ge 0 & \forall a\in\cA \\
& \tilde{d} \ge 0 \\
& x_a \geq 0 & \forall a\in\cA 
\end{align}
In this formulation, $\tilde{d}$ is the amount of demand that we intend to satisfy.
We consider the problem as a bilevel optimization problem, where first the network owner makes his decision of the amount of demand he wishes to satisfy, and then nature chooses a probability distribution for the demand which maximizes the expected unsatisfied demand. So the second level problem (nature's problem) is 
\begin{align}
 \max_\mathbb{P} &   \quad \mathbb{E}_{\mathbb{P}}\big{[} d-\tilde{d} \big{]}_+ \label{nature_prob} \\
 \text{s.t. } & \quad \mathbb{E}_{\mathbb{P}}\big{[} d\big{]} = \mu \label{mean_const}\\
 & \quad \mathbb{E}_{\mathbb{P}}\big{[} d - \mu \big{]^2} = \sigma^2 \label{variance_const} 
\end{align}
This means we consider all probability distributions $\mathbb{P}$ over $d$ that have the same mean $\mu$ and variance $\sigma^2$. We denote this set as $\mathcal{P}$.
We can thus rewrite the DRSO problem as
\[ \min_{(\pmb{x},\pmb{f},\tilde{d})\in X} \sum_{a \in \cA} c_ax_a + \phi N(\tilde{d}) \]
where $N(\tilde{d})$ denotes the value of the inner nature's problem, and $X$ the set of feasible solutions with respect to $\pmb{x}$, $\pmb{f}$ and $\tilde{d}$.

\subsection{Model Reformulation}

\begin{lemma}
\label{Nature1}
Let some first-stage solution $(\pmb{x},\pmb{f},\tilde{d})$ be fixed.
The optimal objective value of nature's problem can then be written as
\begin{equation}
N(\tilde{d})=
\begin{cases}
1/2 \left(\mu-\tilde{d}+\sqrt{(\tilde{d}-\mu)^2 +\sigma^2}\right) \quad &\mbox{if} \hspace{0.3pc} \tilde{d} > \frac{\mu^2 + \sigma^2}{2\mu} \label{fln1_3}\\
\mu - \tilde{d}\frac{\mu^2}{\mu^2 + \sigma^2} \quad &\mbox{if} \hspace{0.3pc} \tilde{d} \le \frac{\mu^2 + \sigma^2}{2\mu} 
\end{cases} 
\end{equation}
\end{lemma}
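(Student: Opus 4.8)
The plan is to pin down the formula by sandwiching $N(\tilde d)$: on one side, exhibit an explicit \emph{feasible} two-point distribution attaining the claimed right-hand side; on the other side, produce a quadratic function dominating $[d-\tilde d]_+$ whose expectation, evaluated using only the mean and variance, equals the same value. That a two-point distribution is optimal is exactly the content of the Richter--Rogosinski theorem cited in the introduction; this motivates the candidate distributions, but the argument below does not logically require that reduction, since the upper bound will be proved pointwise for \emph{every} distribution in $\mathcal P$.

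First I would establish the upper bound. For $\tilde d > (\mu^2+\sigma^2)/(2\mu)$ I would use the classical Scarf-type estimate: from $[y]_+=\tfrac12(|y|+y)$ and $|y|\le (y^2+c^2)/(2c)$ (valid for any $c>0$), one gets pointwise $[d-\tilde d]_+\le \tfrac12\big(\tfrac{(d-\tilde d)^2+c^2}{2c}+(d-\tilde d)\big)$; taking $\mathbb{E}_{\mathbb{P}}$, using $\mathbb{E}_{\mathbb{P}}[(d-\tilde d)^2]=\sigma^2+(\mu-\tilde d)^2$, and minimising over $c$ (the minimiser being $c=\sqrt{(\tilde d-\mu)^2+\sigma^2}$) yields $N(\tilde d)\le \tfrac12\big(\mu-\tilde d+\sqrt{(\tilde d-\mu)^2+\sigma^2}\big)$ — and this needs no use of $d\ge 0$. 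For $\tilde d\le (\mu^2+\sigma^2)/(2\mu)$ this bound is not attainable by a nonnegative distribution, and I would instead verify the majorant $[d-\tilde d]_+\le Q(d):=(1-2\tilde d/q)\,d+(\tilde d/q^2)\,d^2$ for all $d\ge 0$, where $q:=(\mu^2+\sigma^2)/\mu$: on $0\le d\le\tilde d$ one has $Q(d)\ge 0$ because the linear coefficient $1-2\tilde d/q$ is nonnegative exactly in this regime, while on $d\ge\tilde d$ a short computation gives $Q(d)-(d-\tilde d)=\tilde d\,(d/q-1)^2\ge 0$. Taking $\mathbb{E}_{\mathbb{P}}$ and substituting $\mathbb{E}_{\mathbb{P}}[d]=\mu$, $\mathbb{E}_{\mathbb{P}}[d^2]=\mu^2+\sigma^2$ gives $N(\tilde d)\le \mu-\tilde d\,\mu^2/(\mu^2+\sigma^2)$.

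For the lower bound I would exhibit the distributions making these inequalities tight. In the first regime, take the two-point distribution symmetric about $\tilde d$, with atoms $\tilde d\pm c$ for $c=\sqrt{(\tilde d-\mu)^2+\sigma^2}$ and mass $w=\tfrac12\big(1-(\tilde d-\mu)/c\big)$ on the larger atom; a direct check shows its mean is $\mu$, its variance $w(1-w)(2c)^2=\sigma^2$, both atoms are nonnegative precisely when $\tilde d\ge(\mu^2+\sigma^2)/(2\mu)$, and its objective value is $wc=\tfrac12(\mu-\tilde d+c)$. In the second regime, take the two-point distribution on $\{0,q\}$, $q=(\mu^2+\sigma^2)/\mu$, with mass $\mu^2/(\mu^2+\sigma^2)$ on $q$; its mean is $\mu$, its variance $\sigma^2$, $q>\tilde d$ holds since $q\ge 2\tilde d$ in this regime, and its objective value is $\mu-\tilde d\,\mu^2/(\mu^2+\sigma^2)$. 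Finally I would note that the two branches agree at $\tilde d=(\mu^2+\sigma^2)/(2\mu)$ (both equal $\mu/2$), so the piecewise expression is consistent.

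The main obstacle I anticipate is guessing the quadratic majorant $Q$ in the second case: this is where the nonnegativity constraint $d\ge 0$ becomes active and where the case split originates, and the coefficients of $Q$ are forced only by requiring that $Q$ touch $[d-\tilde d]_+$ at the two atoms $0$ and $q$ of the conjectured worst-case distribution (complementary slackness for the underlying moment LP). Everything else is routine algebra. A more systematic but equivalent route would be to write nature's problem and its semi-infinite LP dual explicitly, invoke strong duality (Isii / Richter--Rogosinski), and solve the dual optimality conditions; this reproduces the same two regimes and the same computations.
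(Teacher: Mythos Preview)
Your proposal is correct and takes a genuinely different route from the paper's proof. The paper invokes Richter--Rogosinski to reduce nature's problem to two-point distributions, and then (i) in the regime $\tilde d\le(\mu^2+\sigma^2)/(2\mu)$ simply asserts that the worst case is the one-sided Chebyshev distribution on $\{0,(\mu^2+\sigma^2)/\mu\}$ and evaluates the objective there, and (ii) in the regime $\tilde d>(\mu^2+\sigma^2)/(2\mu)$ parametrises the two-point family by the upper atom $\chi_2$, writes $N$ as a function of $\chi_2$, differentiates, and solves for the optimal $\chi_2$. Your sandwich argument is more self-contained: the explicit quadratic majorants give the upper bound for \emph{every} distribution in $\mathcal P$ without appealing to the two-point reduction, and the explicit two-point distributions certify tightness. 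This makes the logical dependence on Richter--Rogosinski disappear and also handles the nonnegativity constraint $d\ge 0$ cleanly through the design of $Q$ (whereas the paper's case~(i) leaves implicit why no other two-point distribution does better). The price you pay is having to guess the dual majorant $Q$ in the second regime, which you correctly identify as the only nontrivial step; the paper's direct optimisation over $\chi_2$ avoids that guesswork but in exchange relies on the external reduction theorem.
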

\begin{proof}
A proof of the result can be found in ~\cite{lo1987semi} . Recall that $N(\tilde{d}) = \max_{\mathbb{P} \in \mathcal{P}} \mathbb{E}_{\mathbb{P}}[\max (d-\tilde{d},0)]$. 
It can be shown that there is a worst-case distribution that is a 
two-point distribution, which follows from the Richter-Rogonski theorem 
\cite{Richter_1957,Rogosinski1958}. For the sake of completeness, we present a proof in Appendix \ref{Nat_proof}.
%Recall that $N(\tilde{d}) = \max_{\mathbb{P} \in \mathcal{P}} \mathbb{E}_{\mathbb{P}}[\max (d-\tilde{d},0)]$. It can be shown that there is a worst-case distribution that is a two-point distribution, which follows from the Richter-Rogonski theorem \cite{Richter_1957,Rogosinski1958}. \fran{The result then follows the proof detailed in  Appendix \autoref{Nat_proof}.It's interesting to note that ~\cite{lo1987semi} had a proof of the above equation from the context of option prices leveraging on a proof in \cite{Scarf}. }
\end{proof}\\
An example for the shape of function $N(\tilde{d})$ is presented in~\autoref{Natur}.

\begin{figure}[htbp]
\begin{center}
\includegraphics[width=.5\textwidth]{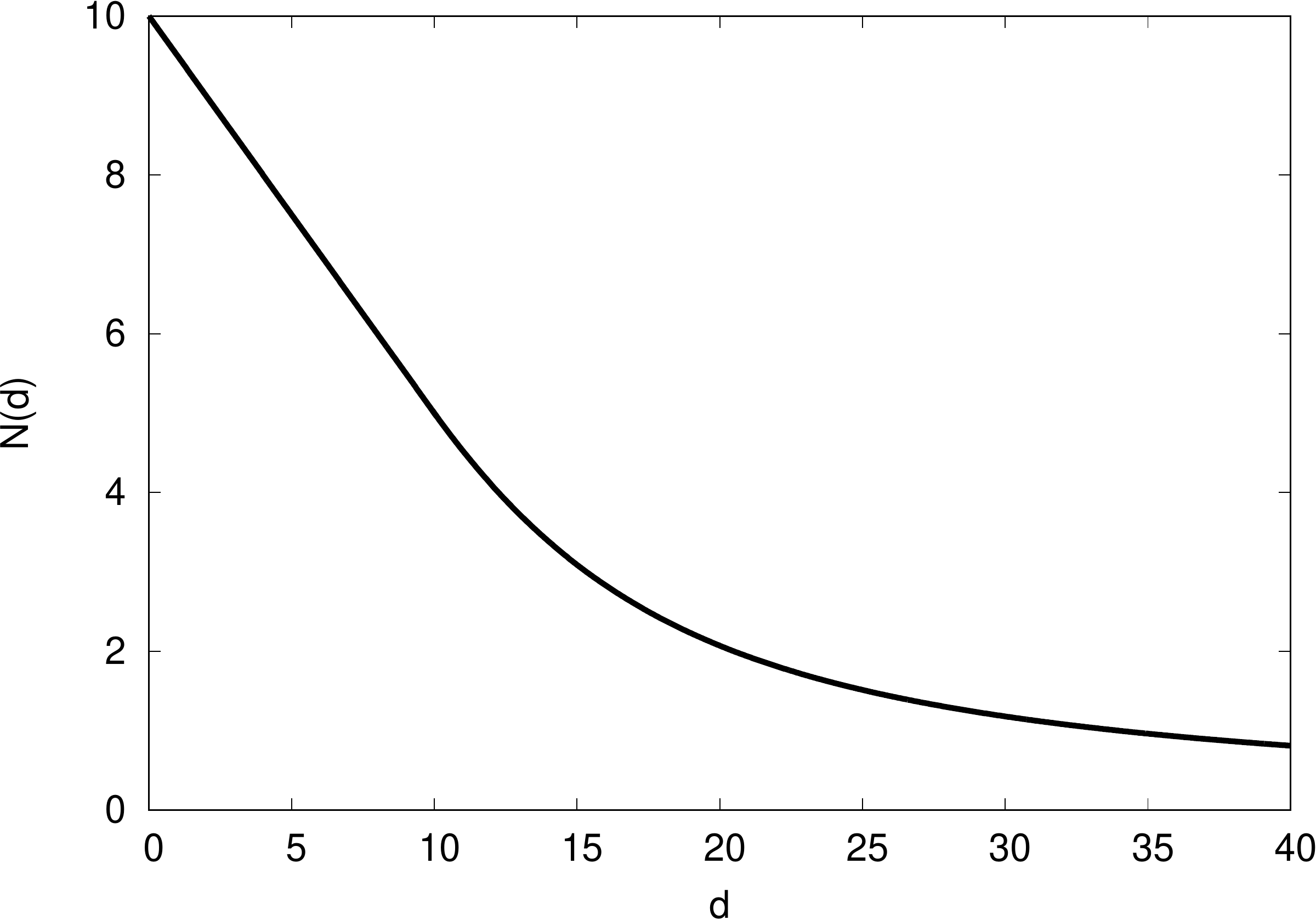}
\caption{Example shape of $N(\tilde{d})$ with $\mu=10$ and $\sigma^2=100$. In this case, $(\mu^2+\sigma^2)/2\mu = 10$.}\label{Natur}
\end{center}
\end{figure}

\begin{lemma}\label{Nlemma}
The function $N(\tilde{d})$ is convex. 
\end{lemma}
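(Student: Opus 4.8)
The plan is to avoid the closed form from \autoref{Nature1} altogether and argue directly from the defining expression $N(\tilde{d})=\max_{\mathbb{P}\in\mathcal{P}}\mathbb{E}_{\mathbb{P}}[d-\tilde{d}]_+$. For each fixed value of the random variable $d$, the map $\tilde{d}\mapsto[d-\tilde{d}]_+=\max\{0,\,d-\tilde{d}\}$ is the maximum of two affine functions of $\tilde{d}$, hence convex; taking expectations preserves convexity, so for every fixed $\mathbb{P}\in\mathcal{P}$ the function $g_{\mathbb{P}}(\tilde{d}):=\mathbb{E}_{\mathbb{P}}[d-\tilde{d}]_+$ is convex in $\tilde{d}$ (and finite, since $\mathbb{P}$ has finite mean). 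Consequently $N(\tilde{d})=\sup_{\mathbb{P}\in\mathcal{P}}g_{\mathbb{P}}(\tilde{d})$ is a pointwise supremum of convex functions, and such a supremum is convex; by \autoref{Nature1} this supremum is finite for every $\tilde{d}\ge 0$, so $N$ is a genuine finite-valued convex function on its domain.

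As an alternative, more computational route one can argue from the explicit formula. On the branch $\tilde{d}\le(\mu^2+\sigma^2)/(2\mu)$ the function $N$ is affine, hence convex. On the branch $\tilde{d}>(\mu^2+\sigma^2)/(2\mu)$ we have $N(\tilde{d})=\tfrac12(\mu-\tilde{d})+\tfrac12\sqrt{(\tilde{d}-\mu)^2+\sigma^2}$, where the first term is affine and the second is the composition of the convex function $t\mapsto\sqrt{t^2+\sigma^2}$ with an affine map, so this branch is convex as well. It then remains to check that the pieces are glued consistently at the breakpoint $\tilde{d}^{*}=(\mu^2+\sigma^2)/(2\mu)$: using $\tilde{d}^{*}-\mu=(\sigma^2-\mu^2)/(2\mu)$ and $\sqrt{(\tilde{d}^{*}-\mu)^2+\sigma^2}=(\sigma^2+\mu^2)/(2\mu)$, one verifies that both pieces take the value $\mu/2$ at $\tilde{d}^{*}$ and both have derivative $-\mu^2/(\mu^2+\sigma^2)$ there, so $N$ is in fact $C^1$ at $\tilde{d}^{*}$; a function that is convex on two adjacent intervals, continuous at the common endpoint, and has matching one-sided derivatives there is convex on the union.

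There is essentially no hard step in either route. In the first, the only point needing a word of care is that the supremum be finite, so that convexity is not vacuous, which is exactly what \autoref{Nature1} supplies; in the second, the only real work is the short breakpoint computation, which is routine. I would present the first route as the proof and, if desired, mention the closed-form check as a remark.
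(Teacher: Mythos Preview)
Your proof is correct. Your primary route is genuinely different from the paper's argument: you bypass the closed form entirely and observe that $N(\tilde{d})=\sup_{\mathbb{P}\in\mathcal{P}}\mathbb{E}_{\mathbb{P}}[d-\tilde{d}]_+$ is a pointwise supremum of convex functions (each $g_{\mathbb{P}}$ being an expectation of integrands that are convex in $\tilde{d}$), invoking \autoref{Nature1} only to certify finiteness. The paper instead works directly from the explicit two-piece formula, computing the first and second derivatives on each branch and verifying that the first derivative is continuous at the breakpoint $\tilde{d}^{*}=(\mu^2+\sigma^2)/(2\mu)$ --- this is essentially your alternative route, except that the paper shows convexity of each piece via the sign of the second derivative rather than by your composition-with-affine argument. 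Your structural approach is shorter, requires no calculation, and makes clear that convexity would hold even without the closed form; the paper's computation (and your Route~2) has the minor side benefit of exhibiting the derivative explicitly and confirming that $N$ is in fact $C^1$ at the breakpoint, though that extra smoothness is not needed for the lemma itself.
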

\begin{proof}
The first derivative of $N$ with respect to $\tilde{d}$ is
\begin{equation*}
\frac{\partial N}{\partial \tilde{d}} = 
\begin{cases}
\frac{1}{2}\left[ \frac{\tilde{d}-\mu}{\sqrt{(\tilde{d}-\mu)^2 + \sigma^2}}-1\right] & \text{ if }  \tilde{d} > \frac{\mu^2 + \sigma^2}{2\mu} \\
-\frac{\mu^2}{\mu^2 + \sigma^2} & \text{ if } \tilde{d} \le \frac{\mu^2 + \sigma^2}{2\mu} 
\end{cases}
\end{equation*}
and the second derivative is
\begin{equation*}
\frac{\partial^2 N}{\partial \tilde{d}^2} = 
\begin{cases}
\frac{\sigma^2}{2((\tilde{d}-\mu)^2 + \sigma^2)^{\frac{3}{2}}} & \text{ if } \tilde{d} > \frac{\mu^2 + \sigma^2}{2\mu} \\
0 & \text{ if } \tilde{d} \le \frac{\mu^2 + \sigma^2}{2\mu} \end{cases}
\end{equation*}
Note that the first derivative is continuous, as
\begin{align*}
& \frac{1}{2}\left[ \frac{\frac{\mu^2 + \sigma^2}{2\mu} -\mu}{\sqrt{(\frac{\mu^2 + \sigma^2}{2\mu} -\mu)^2 + \sigma^2}}-1\right] =
\frac{1}{2}\left[ \frac{\frac{\sigma^2-\mu^2}{2\mu}}{\sqrt{(\frac{ \mu^2+\sigma^2}{2\mu})^2 -(\mu^2 +\sigma^2)+ \mu^2+ \sigma^2}}-1\right] \\
&= \frac{1}{2}\left[ \frac{\frac{\sigma^2-\mu^2}{2\mu}}{\sqrt{(\frac{\mu^2+\sigma^2}{2\mu})^2}}-1\right]
= \frac{1}{2}\left[ \frac{\frac{\sigma^2-\mu^2}{2\mu}}{\frac{\mu^2+\sigma^2}{2\mu}}-1\right] 
= \frac{1}{2}\left[ \frac{\sigma^2-\mu^2}{\mu^2+\sigma^2}-1\right] 
= \frac{1}{2}\left[ -\frac{2\mu^2}{\mu^2+\sigma^2} \right] = -\frac{\mu^2}{\mu^2+\sigma^2},
\end{align*}
and it is non-decreasing. Hence, the function is convex.
\end{proof}

We now introduce some additional notation. Let $F(\tilde{d})$ be the objective value of the DRSO problem for fixed value of $\tilde{d}$. Then,
\[ F(\tilde{d}) = \min_{(\pmb{x},\pmb{f})\in X(\tilde{d})} \sum_{a\in\cA} c_ax_a + \phi N(\tilde{d}) \]
where as before, $N(\tilde{d})$ is the objective of the adversary problem, and $X(\tilde{d})$ is the set of vectors $\pmb{x}$ and $\pmb{f}$ that give a flow value $\tilde{d}$. We rewrite this to
\[ F(\tilde{d}) = \phi N(\tilde{d}) + G(\tilde{d}) \]
where $G(\tilde{d}) = \min_{(\pmb{x},\pmb{f})\in X(\tilde{d})} \sum_{a\in\cA} c_ax_a$.

\begin{theorem}\label{Fconv}
The function $F(\tilde{d})$ is convex.
\end{theorem}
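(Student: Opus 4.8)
The plan is to use the decomposition already established just before the statement, namely $F(\tilde d) = \phi N(\tilde d) + G(\tilde d)$ with $G(\tilde d) = \min_{(\pmb{x},\pmb{f})\in X(\tilde d)} \sum_{a\in\cA} c_a x_a$, and to argue that each of the two summands is convex; since a nonnegative-weighted sum of convex functions is convex and $\phi\ge 0$, this suffices. The term $\phi N(\tilde d)$ is convex immediately from \autoref{Nlemma}, so the whole content of the proof lies in showing that $G$ is convex in $\tilde d$.

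To prove $G$ is convex, I would observe that $G$ is the optimal value of a linear program in the variables $(\pmb{x},\pmb{f})$ in which the parameter $\tilde d$ enters \emph{only} through the right-hand sides of the flow-balance equalities at the source and sink, and that this dependence is affine; every remaining constraint (the capacity inequalities $f_a \le u_a + x_a$ and the nonnegativity constraints $f_a\ge 0$, $x_a\ge 0$) is independent of $\tilde d$. Concretely, fix $\tilde d_1,\tilde d_2\ge 0$ and $\lambda\in[0,1]$, put $\tilde d_\lambda=\lambda\tilde d_1+(1-\lambda)\tilde d_2$, and take optimal $(\pmb{x}_1,\pmb{f}_1)\in X(\tilde d_1)$ and $(\pmb{x}_2,\pmb{f}_2)\in X(\tilde d_2)$. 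Their convex combination $(\pmb{x}_\lambda,\pmb{f}_\lambda):=\lambda(\pmb{x}_1,\pmb{f}_1)+(1-\lambda)(\pmb{x}_2,\pmb{f}_2)$ then satisfies each flow-balance equation with right-hand side $\lambda\tilde d_1+(1-\lambda)\tilde d_2=\tilde d_\lambda$ (by linearity of the divergence operator and affinity of the right-hand side), satisfies each capacity inequality (a convex combination of points obeying the same linear inequality), and is nonnegative; hence $(\pmb{x}_\lambda,\pmb{f}_\lambda)\in X(\tilde d_\lambda)$. Since the objective is linear, $G(\tilde d_\lambda)\le \sum_{a\in\cA} c_a(\lambda x_{1,a}+(1-\lambda)x_{2,a}) = \lambda G(\tilde d_1)+(1-\lambda)G(\tilde d_2)$, which is exactly convexity of $G$.

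I would also record the minor point that $X(\tilde d)\neq\emptyset$ for every $\tilde d\ge 0$ (one can always route $\tilde d$ units along any single $s$–$t$ path and add enough capacity $\pmb{x}$ on that path), so that $G$ is finite-valued on $[0,\infty)$ and the averaging argument above is never vacuous; its effective domain is the convex set $[0,\infty)$. Combining, $F=\phi N + G$ is a sum of two convex functions and therefore convex.

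I do not expect a genuine obstacle here: the argument is the textbook fact that the optimal value of a linear program is a convex function of right-hand-side perturbations. The only things that need care are (i) noting explicitly that $\tilde d$ enters the constraints of the inner LP only as an affine right-hand side of equality constraints — this is precisely what makes ``average the two solutions'' produce a feasible point for the averaged parameter — and (ii) checking feasibility so that $G$ stays finite; everything else is routine.
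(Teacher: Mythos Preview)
Your proposal is correct. Both you and the paper use the same decomposition $F(\tilde d)=\phi N(\tilde d)+G(\tilde d)$ and reduce the task to showing that $G$ is convex, but the arguments for $G$ differ. You argue directly: since $\tilde d$ enters the inner LP only as an affine right-hand side, the convex combination of two optimal solutions is feasible for the averaged parameter, and linearity of the objective gives the convexity inequality. The paper instead introduces an auxiliary problem $G'(\tilde d)$ in which the equality $\sum_{a\in\delta^-(t)}f_a-\sum_{a\in\delta^+(t)}f_a=\tilde d$ is replaced by a sufficiently large penalty $\psi\,\lvert \tilde d-\sum_{a\in\delta^-(t)}f_a+\sum_{a\in\delta^+(t)}f_a\rvert$ in the objective; it shows $G'=G$ for $\psi\ge\sum_{a}c_a$, and then invokes the general fact that $\inf_{y\in C} f_1(x,y)$ is convex when $f_1$ is jointly convex and $C$ is convex. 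Your route is the textbook ``LP value function is convex in the right-hand side'' argument and is shorter and more elementary; the paper's route buys a clean appeal to a standard partial-minimization lemma at the cost of constructing and justifying the auxiliary $G'$. Your explicit check that $X(\tilde d)\neq\emptyset$ for all $\tilde d\ge 0$ is a point the paper handles only implicitly.
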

\begin{proof}
We show that $G(\tilde{d})$ is a convex function in $\tilde{d}$. This, together with the fact that $N(\tilde{d})$ is convex due to Lemma~\ref{Nlemma}, implies that their sum, $F(\tilde{d})$ is also convex.

Consider the function $G'(\tilde{d})$, where
\begin{align}
G'(\tilde{d}) = \min\ &\sum_{a\in \cA} c_a x_a + \psi \cdot \ \left \lvert \tilde{d} - \sum_{a\in \delta^-(t)} f_a + \sum_{a\in \delta^+(t)} f_a  \right \rvert  \\
\text{s.t. } &  \sum_{a\in \delta^-(v)} f_a - \sum_{a\in \delta^+(v)} f_a = 0 \label{Gs}
& \forall  v\in \cV\backslash \{s,t\} \\
& f_a \leq u_a + x_a & \forall   a\in \cA \\
&f_a \ge 0 & \forall a\in\cA \\
& x_a \geq 0 & \forall a\in\cA \label{Ge}
\end{align}
for a large value $\psi \ge \sum_{a\in\cA} c_a$. Let $(\pmb{x}',\pmb{f}')$ be an optimal solution to $G'(\tilde{d})$ and assume that $\Delta := \lvert \tilde{d} - \sum_{a\in \delta^-(t)} f'_a + \sum_{a\in \delta^+(t)} f'_a  \rvert > 0 $. Then we can increase each $x'_a$ by $\Delta$ to find a new solution where there is sufficient capacity to outsource no demand at all. As increasing the capacity this way increases the costs by $\Delta\sum_{a\in\cA} c_a$ and $\psi > \sum_{a\in\cA} c_a$, we have constructed a new solution that is feasible and has no higher objective value that $(\pmb{x}',\pmb{f}')$. Hence there is an optimal solution to $G'(\tilde{d})$ that meet exactly a demand of $\tilde{d}$. Therefore, $G'(\tilde{d})=G(\tilde{d})$.

Recall that  if a function $f_1(x,y)$ is convex in $(x,y)$ and $C$ is a convex set, then
\begin{equation*}
f_2(x)=\inf_{y\in C}f_1(x,y)
\end{equation*} 
is convex as well \cite{Boyd2004}. Therefore, $G(\tilde{d}) = G'(\tilde{d}) = \min_{(\pmb{x},\pmb{f},\tilde{d})\in X} \sum_{a\in \cA} c_a x_a + \psi \cdot \  \lvert \tilde{d} - \sum_{a\in \delta^-(t)} f_a + \sum_{a\in \delta^+(t)} f_a  \rvert $ with $X$ represented by constraints~(\ref{Gs}-\ref{Ge}) is convex.

\end{proof}

We can use \autoref{Fconv} to solve the single-commodity DRSO problem efficiently. For a fixed value $\tilde{d}$, we evaluate $F(\tilde{d})$ by solving $G(\tilde{d})$ as a linear program and $N(\tilde{d})$ using the formula provided in \autoref{fln1_3}.
We can now apply standard convex optimization methods (in our experiments we use the Nelder-Mead method) to solve $\min_{\tilde{d}} F(\tilde{d})$ to optimality.

\subsection{Extension to the Multi-Commodity Case}
\label{sec:ext_MC}

In this setting, we assume that the demand at each of the origin-destination pairs $(s^k,t^k)$ is affected by a different distribution. In particular, we have  mean $\mu_k$ and variance $\sigma^2_k$ for each $k\in\cK$. The nature problem in this case becomes: 
\begin{align}
\max &   \quad \mathbb{E}_{\mathbb{P}}\left[ \sum_{k\in\cK} [d^k - \tilde{d}^k]_+ \right] \label{nat_prob} \\
s.t. & \quad \mathbb{E}_{\mathbb{P}}\big{[} d^k\big{]} = \mu_k &\forall k \in \cK \label{mean_cnsrt}\\
& \quad \mathbb{E}_{\mathbb{P}}\big{[} d^k - \mu_k\big{]}^2 = \sigma_k^2 &\forall k \in \cK \label{vari_c} 
\end{align}
The DRSO problem can now be written as
\begin{align}
\min\ &\sum_{a\in \cA} c_a x_a + \phi \cdot N(\tilde{\pmb{d}}) \\
\text{s.t. } &  \sum_{a\in \delta^-(v)} f^k_a - \sum_{a\in \delta^+(v)} f^k_a = 0
& \forall k\in \cK,  v\in \cV\backslash \{s^k,t^k\} \label{M25}\\
 &  \sum_{a\in \delta^-(t^k)} f^k_a - \sum_{a\in \delta^+(t^k)} f^k_a = \tilde{d}^k
& \forall k\in \cK\\
 &  \sum_{a\in \delta^-(s^k)} f^k_a - \sum_{a\in \delta^+(s^k)} f^k_a = -\tilde{d}^k
& \forall k\in \cK\\
& f^k_a \leq u_a + x_a & \forall  k\in \cK, a\in \cA \\
&f^k_a \ge 0 & \forall k\in \cK, a\in\cA \\
& x_a \geq 0 & \forall a\in\cA \label{M30}
%&\tilde{t}^k \geq 0 & \forall k\in \cK
\end{align}
where $N(\tilde{\pmb{d}})$ denotes the multi-dimensional version of nature's problem as defined by equations~\eqref{nat_prob}-\eqref{vari_c}.

\begin{corollary}
The optimal solution for nature's problem defined by equations \eqref{nat_prob}-\eqref{vari_c} can be expressed as
\begin{equation}
N(\tilde{\pmb{d}}) = \sum_{k\in\cK} N(\tilde{d}^k)
\end{equation}
\end{corollary}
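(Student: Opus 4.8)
The plan is to exploit the fact that the objective of nature's problem is additively separable across commodities and that the moment constraints~\eqref{mean_cnsrt}--\eqref{vari_c} restrict only the \emph{marginal} distributions of the coordinates $d^k$, never their joint dependence structure. Concretely, by linearity of expectation $\mathbb{E}_{\mathbb{P}}\left[\sum_{k\in\cK}[d^k-\tilde{d}^k]_+\right]=\sum_{k\in\cK}\mathbb{E}_{\mathbb{P}}\left[[d^k-\tilde{d}^k]_+\right]$, and each summand $\mathbb{E}_{\mathbb{P}}[[d^k-\tilde{d}^k]_+]$ depends on $\mathbb{P}$ only through its $k$-th marginal. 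The claim then follows by matching upper and lower bounds.

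First I would prove $N(\tilde{\pmb{d}})\le\sum_{k\in\cK}N(\tilde{d}^k)$. Let $\mathbb{P}$ be any distribution feasible for~\eqref{nat_prob}--\eqref{vari_c} and let $\mathbb{P}^k$ be its $k$-th marginal. Then $\mathbb{E}_{\mathbb{P}^k}[d^k]=\mu_k$ and $\mathbb{E}_{\mathbb{P}^k}[(d^k-\mu_k)^2]=\sigma_k^2$, so $\mathbb{P}^k$ is feasible for the single-commodity nature's problem~\eqref{nature_prob}--\eqref{variance_const} with parameters $(\mu_k,\sigma_k^2)$ and threshold $\tilde{d}^k$; hence $\mathbb{E}_{\mathbb{P}^k}[[d^k-\tilde{d}^k]_+]\le N(\tilde{d}^k)$. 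Summing over $k\in\cK$ and using the separability above gives $\mathbb{E}_{\mathbb{P}}\left[\sum_{k\in\cK}[d^k-\tilde{d}^k]_+\right]\le\sum_{k\in\cK}N(\tilde{d}^k)$, and taking the supremum over feasible $\mathbb{P}$ yields the bound.

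For the reverse inequality I would exhibit a feasible joint distribution that attains the sum. By Lemma~\ref{Nature1} (and the two-point worst-case characterisation behind it), for each $k\in\cK$ there is a distribution $\mathbb{Q}^k$ on $\mathbb{R}_+$ with mean $\mu_k$, variance $\sigma_k^2$, and $\mathbb{E}_{\mathbb{Q}^k}[[d^k-\tilde{d}^k]_+]=N(\tilde{d}^k)$. Let $\mathbb{Q}:=\bigotimes_{k\in\cK}\mathbb{Q}^k$ be the product measure on $\mathbb{R}_+^K$. Its $k$-th marginal equals $\mathbb{Q}^k$, so $\mathbb{Q}$ satisfies all constraints~\eqref{mean_cnsrt}--\eqref{vari_c}, and $\mathbb{E}_{\mathbb{Q}}\left[\sum_{k\in\cK}[d^k-\tilde{d}^k]_+\right]=\sum_{k\in\cK}N(\tilde{d}^k)$. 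Hence $N(\tilde{\pmb{d}})\ge\sum_{k\in\cK}N(\tilde{d}^k)$, and combining the two inequalities proves the corollary.

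I do not anticipate a serious obstacle: the argument is essentially linearity of expectation plus the observation that the ambiguity set is defined through marginal moments. The only point needing a little care is the attainment of the single-commodity supremum, which is what makes the product construction valid; this is guaranteed by the explicit worst-case two-point distribution underlying Lemma~\ref{Nature1}. It is also worth remarking that the result shows the worst-case joint distribution may be taken with independent components, even though the ambiguity set permits arbitrary correlations between commodities.
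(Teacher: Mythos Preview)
Your proof is correct and follows essentially the same route as the paper's own argument, which also invokes linearity of expectation together with the decomposability of nature's problem across commodities. Your version is simply more explicit, spelling out the two inequalities via marginals and a product-measure construction where the paper just writes the chain $\max_{\mathbb{P}}\sum_k = \sum_k\max_{\mathbb{P}}$ directly.
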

\begin{proof}
This extends \cref{Nature1} using the linearity of the expectation on the one hand, and the decomposability of nature's problem on the other hand. We have
\begin{align*}
N(\tilde{\pmb{d}}) &= \max_{\mathbb{P}\in\mathcal{P}} \mathbb{E}_{\mathbb{P}} \left[ \sum_{k\in\cK} [d^k-\tilde{d}^k]_+ \right] \\
&= \max_{\mathbb{P}\in\mathcal{P}} \sum_{k\in\cK} \mathbb{E}_{\mathbb{P}} \left[[d^k-\tilde{d}^k]_+ \right] \\
&=  \sum_{k\in\cK} \max_{\mathbb{P}\in\mathcal{P}} \mathbb{E}_{\mathbb{P}} \left[[d^k-\tilde{d}^k]_+ \right] = \sum_{k\in\cK} N(\tilde{d}^k)
\end{align*}
which proves the result.
\end{proof}

Moreover, it can be easily verified that $N(\pmb{\tilde{d}})$ is convex, as it is a sum of convex functions.

\section{Experiments}
\label{sec:Exp}

\subsection{Setup}

Our models were implemented using a real-world network from the SNDLib library (see~ \cite{Orlowski2010}), the Nobel-US network with 14 nodes and 42 arcs. A sample of $10,000$ demand scenarios was generated to form our reference set, sampling i.i.d from a gamma distribution with shape $4$ and scale $5$ while discarding negative demands and any demand above $50$. Three instances of $20$ source-sink node pairs were randomly generated for the test network; commodity set A, commodity set B and commodity set C. We then sample training sets for the optimization models. Commodities A and B use the same set of 60 scenarios. For commodity C, we sample a separate set of 60 scenarios. These are used as discrete uncertainty sets for the robust model, and to compute the empirical mean and the variance for the DRSO model.

% Then, two sets of $60$ demands samples are randomly sampled from the reference set for each of these $20$ different source-sink pairs, which make up the $20$ commodities, to compute the empirical mean and the variance used for the DRSO model. The first set consists of $21$ instances of the $60$ demands while the second set set consist of $20$ instances of $60$ demands samples for each of the $20$ source-sink pairs. Commodity A \& B share the same data set, the first data set, while commodity B uses the second data set, see \autoref{TabS}. 
% 
% The same sets of data are used for the robust model. 

The solutions found by the two optimization models are then evaluated with demands from the reference set using 5,000 scenarios. The whole experimental setup is repeated 21 times for commodities A and B, and 20 times for commodity C. We provide an overview on the experimental setup in~\autoref{TabS}.

The cost of capacity allocation to the arc is randomly generated using a normal distribution with mean $40$ and variance $36$, while the penalty of unsatisfied or outsourced demand was set to 130 using $10(N-1)$, where $N$ is the number of arcs.

\begin{table}[htbp]\footnotesize
\centering
\caption{Experimental setup.}
\begin{tabular}{lcccc}
\toprule
\textbf{Experiment}&\textbf{\ Commodity} & \textbf{\# Repetitions} &\textbf{\# Evaluation per Solution} &\textbf{Total \# Evaluation} \\
\toprule %\hline \hline
$\text{DRSO Model}$& A, B& 21 &    5,000&210,000 \\
$\text{Robust Model}$& A, B&21 &   5,000&210,000  \\
$\text{DRSO Model}$& C& 20 &   5,000&100,000 \\
$\text{Robust Model}$& C&20 &   5,000&100,000  \\
\bottomrule
\end{tabular}
\label{TabS}
\end{table}

The results of the two models are recorded as the in-sample results where the first-stage investment cost of the objective value (Cap. Inv) is the cost of deploying capacity and O/S demand is the outsourced (unsatisfied) demand, which when multiplied by the unit penalty cost ($\phi$) gives the second-stage outsourcing costs of the objective value.
The performance of the evaluation model is reported in terms of mean outsourced demand (E[O/S]), expected maximum outsourced demand(E[max O/S]), average outsourced demand over the worst 5\% values (CVaR95), average outsourced demand over the worst 25\% values (CVaR75) and the mean satisfied demand E[$\tilde{d}$]. The first of these metrics, the E[O/S], is a low risk measure while the rest three are high risk measures. The two models were implemented using Julia and Gurobi version 7.5 on a Lenovo desktop machine with 8GB RAM and Intel Core i5-65 CPU with 2.50GHz using Windows 10 (64-bit) OS.

The results of the two models, DRSO and Robust, are evaluated in-sample and out-sample using the below model. This seeks to calculate an optimal flow for a fixed scenario $\pmb{d}$ while minimizing the expected outsourced demand $\tau^k$ in each commodity $k\in\cK$, due to lack of adequate capacity. As the first-stage investment is already fixed, we fix the $\pmb{x}$ solution in this model.
\begin{align*}
\min\ & \sum_{k\in \cK} \tau^k \\
\text{s.t. } &  \sum_{a\in \delta^-(v)} f^k_a - \sum_{a\in \delta^+(v)} f^k_a = 0
& \forall k\in \cK, v\in \cV\backslash \{s^k,t^k\} \\
&  \sum_{a\in \delta^-(t^k)} f^k_a - \sum_{a\in \delta^+(t^k)} f^k_a = \tilde{d}^k
& \forall k\in \cK\\
&  \sum_{a\in \delta^-(s^k)} f^k_a - \sum_{a\in \delta^+(s^k)} f^k_a = -\tilde{d}^k
& \forall k\in \cK\\
& \tau^k \geq d^k - \tilde{d}^k & \forall k\in\cK\\
& f^k_a \leq u_a + x_a & \forall k\in \cK, a\in \cA \\
&f^k_a \ge 0 & \forall k\in \cK, a\in\cA \\
%& x_a \geq 0 & \forall a\in\cA \\
&\tilde{d}^k \geq 0 & \forall k\in \cK \\
&\tau^k \geq 0 & \forall k\in \cK
\end{align*}
For each $\pmb{x}$ solution, $5,000$ evaluations are done and the outsourced demand together with satisfied demand are recorded for each evaluation.

\subsection{Computational Results}

\autoref{Tab4} presents a high level summary of the experimental results. Recall that three instances of $20$ S-T pairs were used, which are denoted as commodities A, B and C in the table. Each row of results under commodities A and B  in the table is the average of $21$ instances while the ones under commodity C is the average of $20$ instances of different $60$ demand samples. The first three columns results are the in-sample optimization result while the next two are the out-of-sample evaluation result. The average (E[O/S]), the maximum (E[Max O/S]), average of the largest $5\%$ (CVaR95) and average of largest $25\%$ (CVaR75) are calculated from the $5,000$ evaluation results for the outsourced demand as the out-of-sample results. While for the satisfied demand, only the average (E[$\tilde{d}$]) is calculated.

\begin{table}[htbp]\footnotesize
\centering
\caption{Comparing the two models under two commodities.}
\begin{tabular}{lrrrrrrr}%ccc}
\toprule

\textbf{Commodity A}	& 	 Tot. Inv 	& Cap. Inv&O/S Dem&E[O/S]&E[$\tilde{d}$]	& Cap Add&  Unit Cost \\
\toprule 
Robust &44,679.05 &	 32,641.54 &92.60&70.91	& 372.02 &	 842.09 	& 38.76	\\ 
DRSO&	 41,830.77 &	18.671.69 &178.15&	159.75& 267.28 & 484.99 &	 38.50 	\\

\midrule
\textbf{Commodity B}&&&&&&&\\	 %\hline
\midrule									
Robust&	 43,548.63 	& 28,685.06&114.33& 86.22&355.32&	 730.25&	 39.29	\\
DRSO&	41,081.28 	& 19,255.14 &167.89& 150.49&280.37 &  489.35 &	 39.35	\\
\midrule
\textbf{Commodity C}&&&&&&&\\	 %\hline
\midrule									
Robust &50,878.38 &	 29,226.48 &166.55&	132.37 &281.68 &705.39 	& 41.43	\\ 
DRSO&	 47,290.23 &	 11,734.21 &273.51&261.79 &141.53 & 283.55 &	 41.38 	\\
\bottomrule
\end{tabular}
\label{Tab4}
\end{table}

In the following, we focus on the evaluation for commodity type A. Results for commodity types B and C can be found in Appendix~\ref{AppendA} and Appendix~\ref{AppendB}, respectively.

From \autoref{Tab4}, it is observed that DRSO solutions build less capacity compared to the robust solutions for all demand instances and hence a lower capital investment, both in terms of total investment and capacity investment. The capacity investment is the cost of adding capacity, the first term of the objective function, while the total investment is the objective value of the optimization problem. The observation seems valid irrespective of the commodity and data set used. For commodity A, the robust solution builds approximately $74\%$ more capacity than the DRSO solution ($50\%$ and $150\%$ more in case of commodity B and C, respectively).
Though this result is the average over all demand instances, it is also true for each single demand instance, see \autoref{fig3d}.
% and \autoref{fig8d}.

\begin{table}[htbp]\footnotesize
	\centering
	%\tbl{Impact of Choice of k on the presorted original data.}
	\caption{Robust model results for commodity type A.}
	\begin{tabular}{rrrrrrrrr}%ccc}
\toprule 
&\multicolumn{2}{c}{In Sample} 
&\multicolumn{5}{c}{Out of Sample}&\\	
\cmidrule (r) {2-3} \cmidrule (l) {4-8}
Inst. &    Cap. Inv.&	O/S Demand&	E[O/S]& E[Max O/S]&	CVaR95&	CVaR75&	E[$\tilde{d}$]&	CapAdd \\
\midrule
 1	 & 	 32,589.64 	 & 	 97.48 	 & 	 72.91 	 & 	 562.76 	 & 	 407.99 	 & 	 231.40 	 & 	 378.76 	 & 	 832.84 	 \\ 
 2	 & 	 34,600.41 	 & 	 77.58 	 & 	 59.55 	 & 	 525.62 	 & 	 369.93 	 & 	 199.51 	 & 	 385.59 	 & 	 894.49 	 \\ 
 3	 & 	 33,855.56 	 & 	 78.89 	 & 	 65.59 	 & 	 558.41 	 & 	 402.72 	 & 	 219.72 	 & 	 375.00 	 & 	 873.30 	 \\ 
 4	 & 	 34,166.13 	 & 	 75.71 	 & 	 60.50 	 & 	 525.91 	 & 	 378.00 	 & 	 204.94 	 & 	 376.94 	 & 	 885.85 	 \\ 
 5	 & 	 34,445.25 	 & 	 88.87 	 & 	 64.67 	 & 	 545.86 	 & 	 391.37 	 & 	 215.37 	 & 	 370.99 	 & 	 888.71 	 \\ 
 6	 & 	 33,818.93 	 & 	 90.58 	 & 	 65.47 	 & 	 542.46 	 & 	 387.26 	 & 	 215.31 	 & 	 387.39 	 & 	 870.78 	 \\ 
 7	 & 	 32,527.15 	 & 	 95.14 	 & 	 75.24 	 & 	 561.43 	 & 	 411.77 	 & 	 239.97 	 & 	 364.20 	 & 	 838.04 	 \\ 
 8	 & 	 31,603.34 	 & 	 106.24 	 & 	 74.96 	 & 	 561.52 	 & 	 407.47 	 & 	 232.43 	 & 	 372.81 	 & 	 809.27 	 \\ 
 9	 & 	 32,173.50 	 & 	 103.65 	 & 	 77.82 	 & 	 561.57 	 & 	 405.88 	 & 	 231.07 	 & 	 370.11 	 & 	 834.15 	 \\ 
 10	 & 	 29,153.00 	 & 	 99.19 	 & 	 91.97 	 & 	 585.70 	 & 	 437.79 	 & 	 264.62 	 & 	 355.20 	 & 	 751.48 	 \\ 
 11	 & 	 29,642.85 	 & 	 118.30 	 & 	 82.57 	 & 	 578.90 	 & 	 423.41 	 & 	 250.96 	 & 	 350.37 	 & 	 768.80 	 \\ 
 12	 & 	 29,848.79 	 & 	 124.09 	 & 	 86.77 	 & 	 579.68 	 & 	 423.98 	 & 	 245.93 	 & 	 354.20 	 & 	 773.85 	 \\ 
 13	 & 	 31,146.65 	 & 	 106.54 	 & 	 75.19 	 & 	 561.82 	 & 	 410.03 	 & 	 239.60 	 & 	 363.83 	 & 	 805.23 	 \\ 
 14	 & 	 32,925.94 	 & 	 80.73 	 & 	 67.86 	 & 	 551.43 	 & 	 396.82 	 & 	 222.27 	 & 	 363.21 	 & 	 853.06 	 \\ 
 15	 & 	 33,431.34 	 & 	 90.17 	 & 	 67.58 	 & 	 555.29 	 & 	 403.51 	 & 	 228.90 	 & 	 374.53 	 & 	 858.36 	 \\ 
 16	 & 	 33,160.00 	 & 	 90.34 	 & 	 68.89 	 & 	 544.42 	 & 	 390.52 	 & 	 216.68 	 & 	 369.76 	 & 	 854.61 	 \\ 
 17	 & 	 30,910.42 	 & 	 93.55 	 & 	 75.41 	 & 	 579.02 	 & 	 424.54 	 & 	 246.26 	 & 	 352.29 	 & 	 799.72 	 \\ 
 18	 & 	 35,969.65 	 & 	 74.46 	 & 	 52.75 	 & 	 524.08 	 & 	 368.39 	 & 	 190.02 	 & 	 395.79 	 & 	 925.42 	 \\ 
 19	 & 	 33,415.01 	 & 	 84.78 	 & 	 67.08 	 & 	 540.52 	 & 	 387.26 	 & 	 212.55 	 & 	 384.07 	 & 	 860.78 	 \\ 
 20	 & 	 30,853.02 	 & 	 93.20 	 & 	 78.69 	 & 	 574.63 	 & 	 425.13 	 & 	 248.80 	 & 	 358.67 	 & 	 795.68 	 \\ 
 21	 & 	 35,235.78 	 & 	 75.03 	 & 	 57.68 	 & 	 515.23 	 & 	 363.10 	 & 	 200.72 	 & 	 408.72 	 & 	 909.52 	 \\ 
	\bottomrule
\end{tabular}
%\caption{Experimental setup for generating 120 problem instances for each network.}
\label{Tab2}
\end{table}

\begin{table}[htbp]\footnotesize
\centering
%\tbl{Impact of Choice of k on the presorted original data.}
\caption{DRSO model results for commodity type A.}
\begin{tabular}{rrrcrcrrrr}%ccc}
\toprule 
&&\multicolumn{2}{l}{In Sample} 
&\multicolumn{5}{c}{Out of Sample}&\\ 
\cmidrule (r) {2-4} \cmidrule (l) {5-9}
Inst. &    Cap. Inv.& Nature& $\tilde{d}$ &E[O/S]& E[ Max O/S]& CVaR95& CVaR75& E[$\tilde{d}$]& CapAdd \\
\midrule
1	&	 19,066.17 	 & 	 176.23 	 & 	 300.70 	 & 	 154.02 	 & 	 698.87 	 & 	 543.18 	 & 	 356.25 	 & 	 278.11 	 & 	 495.16 	\\
2	&	 19,577.97 	 & 	 171.08 	 & 	 297.69 	 & 	 150.88 	 & 	 688.22 	 & 	 532.53 	 & 	 350.41 	 & 	 279.93 	 & 	 506.00 	\\
3	&	 16,785.07 	 & 	 185.68 	 & 	 267.71 	 & 	 173.22 	 & 	 726.25 	 & 	 570.56 	 & 	 383.89 	 & 	 254.52 	 & 	 437.08 	\\
4	&	 20,276.51 	 & 	 162.84 	 & 	 307.05 	 & 	 142.09 	 & 	 677.09 	 & 	 521.40 	 & 	 339.98 	 & 	 286.83 	 & 	 529.02 	\\
5	&	 17,372.72 	 & 	 192.30 	 & 	 271.26 	 & 	 176.06 	 & 	 728.31 	 & 	 572.62 	 & 	 385.63 	 & 	 246.74 	 & 	 452.77 	\\
6	&	 17,882.24 	 & 	 183.66 	 & 	 283.42 	 & 	 168.07 	 & 	 716.15 	 & 	 560.46 	 & 	 373.48 	 & 	 261.94 	 & 	 463.40 	\\
7	&	 19,125.03 	 & 	 170.55 	 & 	 292.95 	 & 	 154.98 	 & 	 704.27 	 & 	 548.58 	 & 	 361.60 	 & 	 271.78 	 & 	 495.25 	\\
8	&	 17,633.05 	 & 	 197.36 	 & 	 275.90 	 & 	 169.89 	 & 	 717.59 	 & 	 561.90 	 & 	 376.29 	 & 	 258.67 	 & 	 459.89 	\\
9	&	 17,282.82 	 & 	 195.63 	 & 	 261.69 	 & 	 180.33 	 & 	 737.88 	 & 	 582.19 	 & 	 395.31 	 & 	 237.93 	 & 	 442.80 	\\
10	&	 19,688.38 	 & 	 161.49 	 & 	 292.87 	 & 	 151.62 	 & 	 700.52 	 & 	 544.83 	 & 	 359.57 	 & 	 274.50 	 & 	 507.90 	\\
11	&	 19,001.74 	 & 	 172.63 	 & 	 298.09 	 & 	 152.80 	 & 	 697.06 	 & 	 541.37 	 & 	 354.93 	 & 	 274.36 	 & 	 495.37 	\\
12	&	 21,012.66 	 & 	 162.36 	 & 	 317.59 	 & 	 136.45 	 & 	 670.18 	 & 	 514.49 	 & 	 332.53 	 & 	 292.79 	 & 	 547.64 	\\
13	&	 16,316.88 	 & 	 202.74 	 & 	 264.19 	 & 	 184.32 	 & 	 735.38 	 & 	 579.69 	 & 	 392.82 	 & 	 246.14 	 & 	 424.59 	\\
14	&	 18,998.62 	 & 	 172.22 	 & 	 291.80 	 & 	 152.49 	 & 	 704.37 	 & 	 548.68 	 & 	 361.77 	 & 	 268.54 	 & 	 497.03 	\\
15	&	 20,174.18 	 & 	 166.03 	 & 	 304.22 	 & 	 144.64 	 & 	 695.35 	 & 	 539.66 	 & 	 352.67 	 & 	 282.83 	 & 	 523.39 	\\
16	&	 22,007.06 	 & 	 144.70 	 & 	 318.79 	 & 	 132.10 	 & 	 680.78 	 & 	 525.08 	 & 	 338.10 	 & 	 298.47 	 & 	 569.06 	\\
17	&	 19,207.57 	 & 	 172.70 	 & 	 296.34 	 & 	 151.61 	 & 	 700.80 	 & 	 545.11 	 & 	 358.26 	 & 	 269.02 	 & 	 503.15 	\\
18	&	 19,184.25 	 & 	 176.21 	 & 	 292.80 	 & 	 156.95 	 & 	 706.77 	 & 	 551.08 	 & 	 364.10 	 & 	 261.23 	 & 	 500.53 	\\
19	&	 19,023.42 	 & 	 173.25 	 & 	 296.49 	 & 	 153.29 	 & 	 688.88 	 & 	 533.19 	 & 	 352.03 	 & 	 276.72 	 & 	 493.69 	\\
20	&	 15,879.57 	 & 	 198.35 	 & 	 257.87 	 & 	 185.17 	 & 	 741.70 	 & 	 586.01 	 & 	 399.03 	 & 	 243.59 	 & 	 412.61 	\\
21	&	 16,609.68 	 & 	 203.07 	 & 	 260.61 	 & 	 183.82 	 & 	 738.96 	 & 	 583.26 	 & 	 396.28 	 & 	 248.14 	 & 	 428.43 	\\
\bottomrule
\end{tabular}
%\caption{Experimental setup for generating 120 problem instances for each network.}
\label{Tab3}
\end{table}

\autoref{Tab2} and \autoref{Tab3} present the results for each demand instance for the robust and DRSO solutions respectively. For the same demand instance, the DRSO solution invests less in capacity, which can be expected since it takes the distribution information of the random variable into consideration unlike the robust model. The DRSO solution is less conservative in this regards, whereas the robust solution plans for the worst observed realization of the random variable. The DRSO solution is therefore cost efficient, building only the needed capacity based on the distribution information which lowers the investment cost, while the robust solution seems to take the more pessimistic route. Comparing each demand instance, the penalty cost of outsourced demand is lower for the robust solution, which is attributed to the fact that it builds for the worst case.

Evaluating the performance of the optimization solution, which is reported by the out-sample performance in \autoref{Tab2} and \autoref{Tab3}, the expected unsatisfied demand and expected satisfied demand are lower compared to the DRSO solution. All other high-risk metrics are higher for the DRSO solution. The first four metrics are derived from the expected unsatisfied demand which explains this observation while the satisfied demand on the other hand is a function of the capacity already installed for which the DRSO solution is less conservative. However, this simple comparison may not fully explain the performance of the models, hence we will rely on the charts presented in \autoref{fig1} to \autoref{fig3}. The capacity investment, on the horizontal axis, is compared with the four expected unsatisfied demand metrics in \autoref{fig1a} to \autoref{fig1d}.

\begin{figure}[htb]
	\begin{center}
		\begin{subfigure}{.45\textwidth}
			\includegraphics[width=1.1\linewidth]{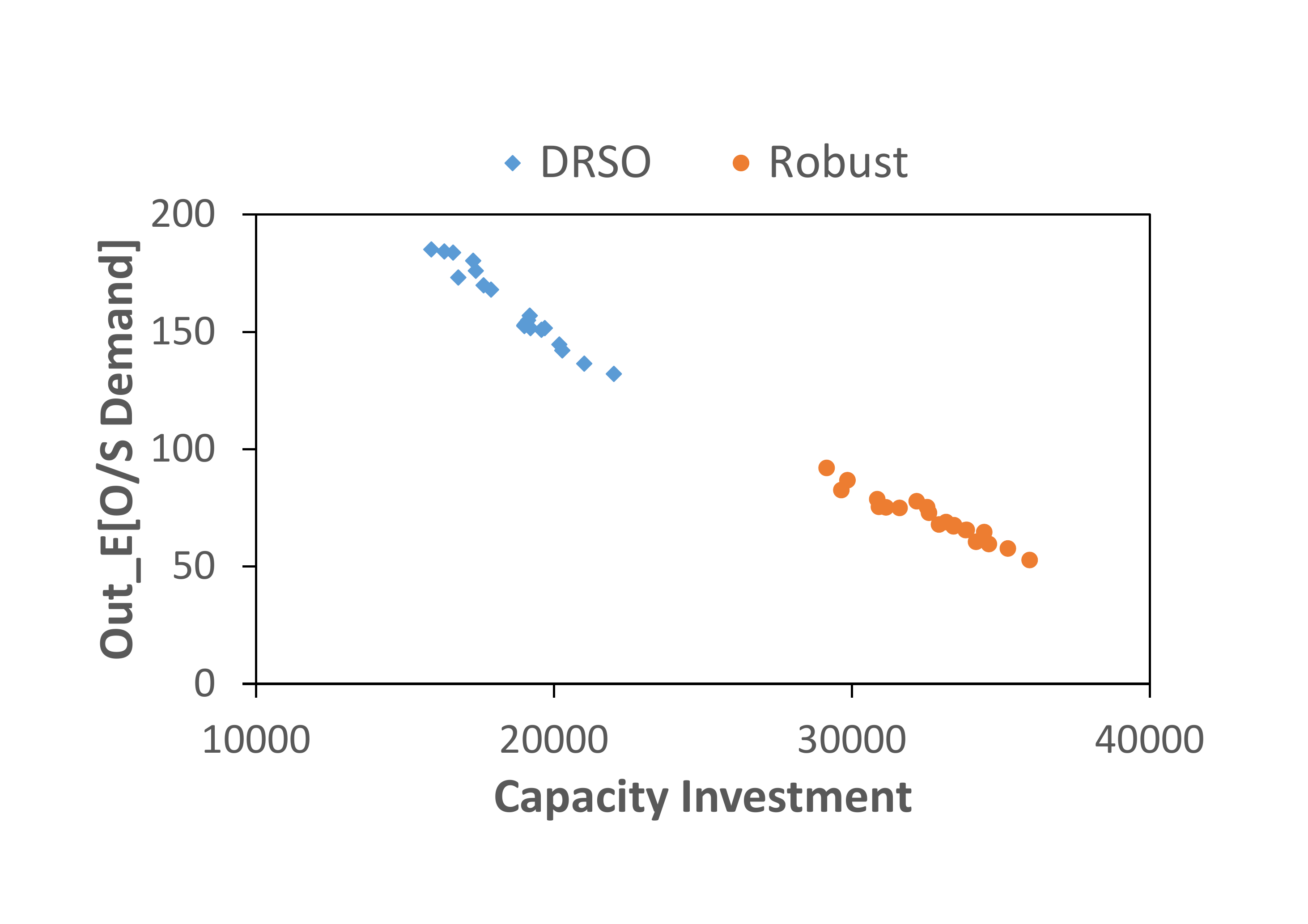}
			\caption{Expected unsatisfied demand.}\label{fig1a}
		\end{subfigure}
		\hspace{1cm}
		\begin{subfigure}{.45\textwidth}
			\includegraphics[width=1.1\linewidth]{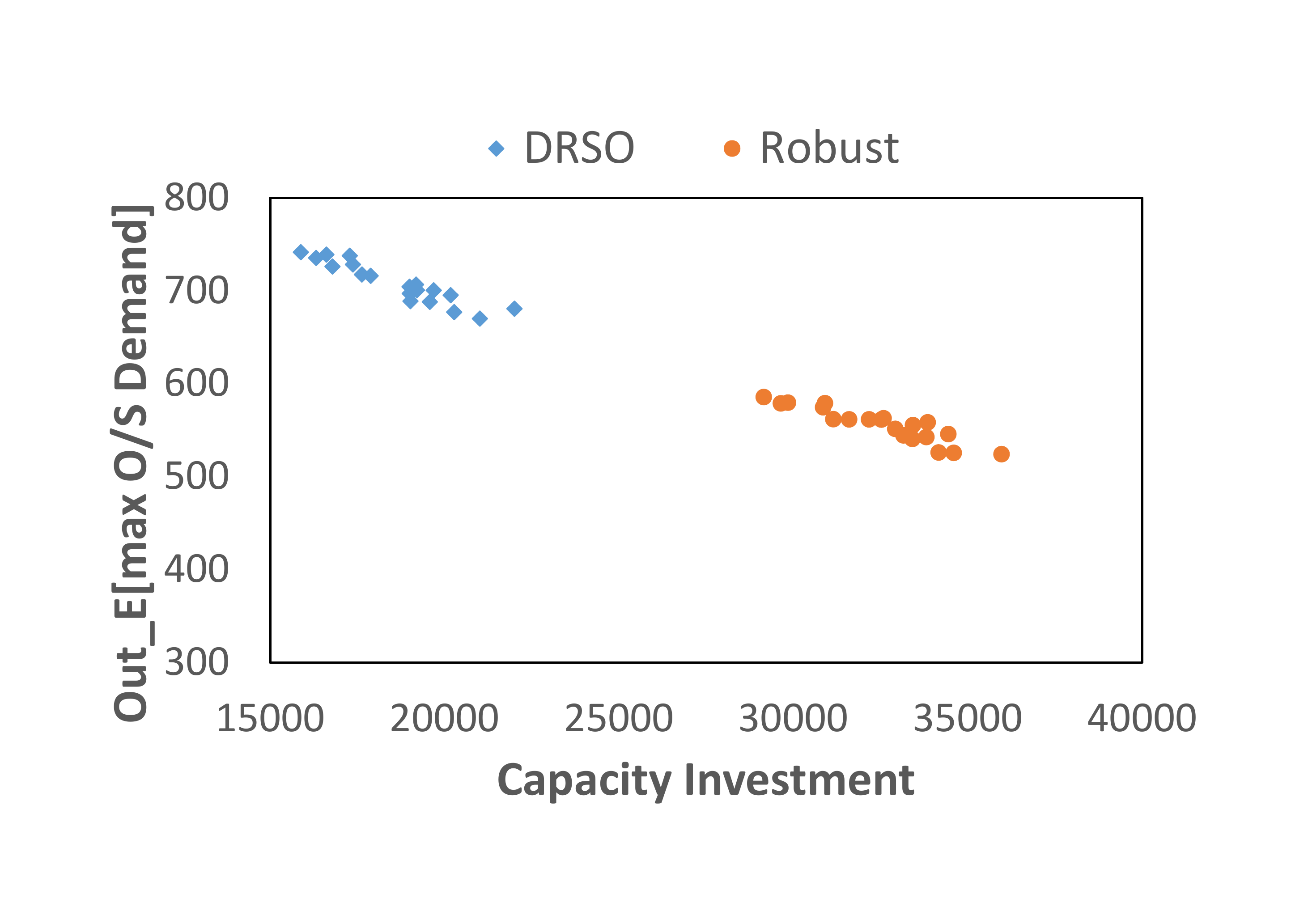}
			\caption{Expected maximum unsatisfied demand.}\label{fig1b}
		\end{subfigure}
		\vspace*{5mm}
		\begin{subfigure}{.45\textwidth}
			\includegraphics[width=1.1\linewidth]{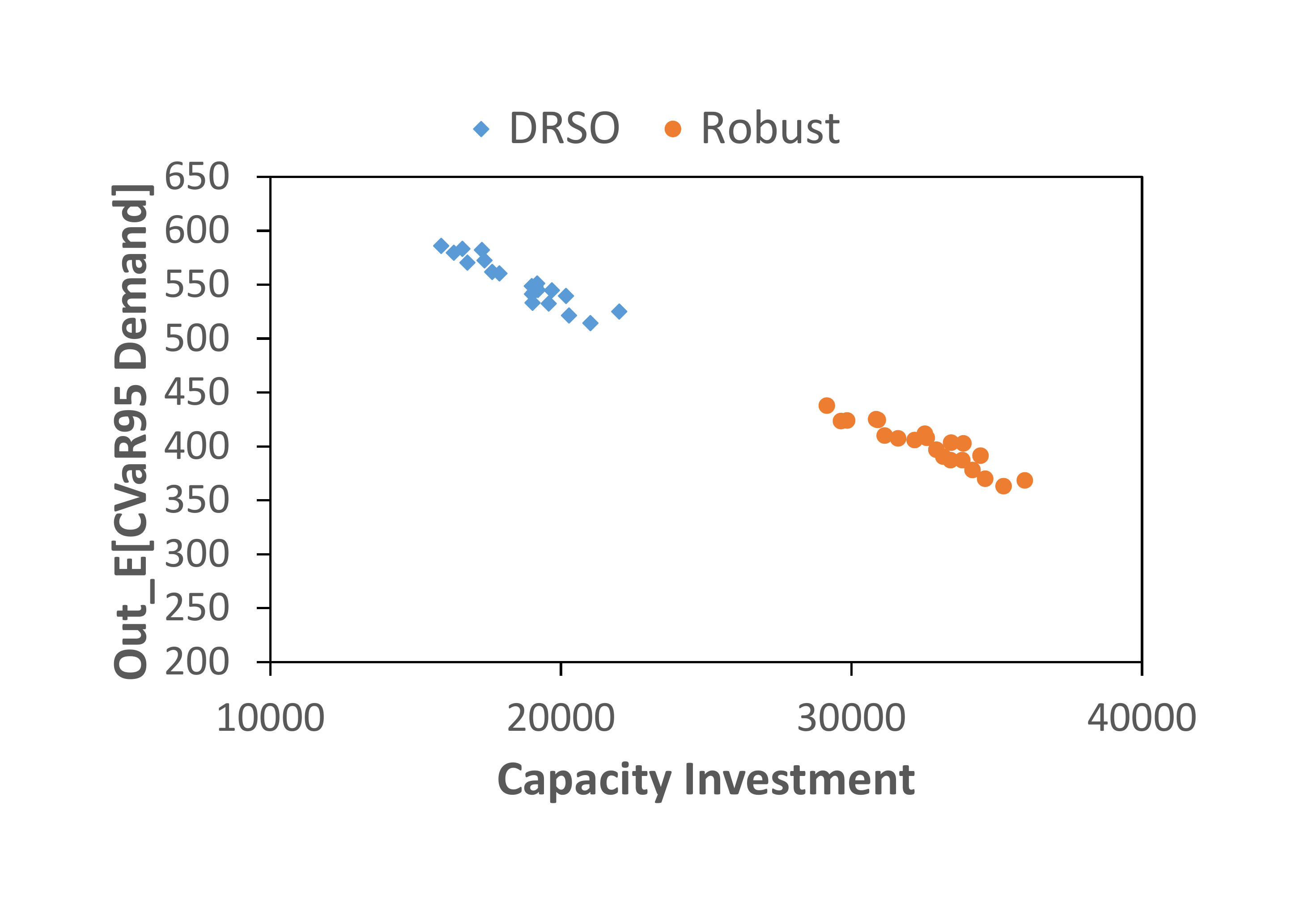}
			\caption{CVaR95 unsatisfied demand.}\label{fig1c}
		\end{subfigure}
		\hspace{1cm}
		\begin{subfigure}{.45\textwidth}
			\includegraphics[width=1.1\linewidth]{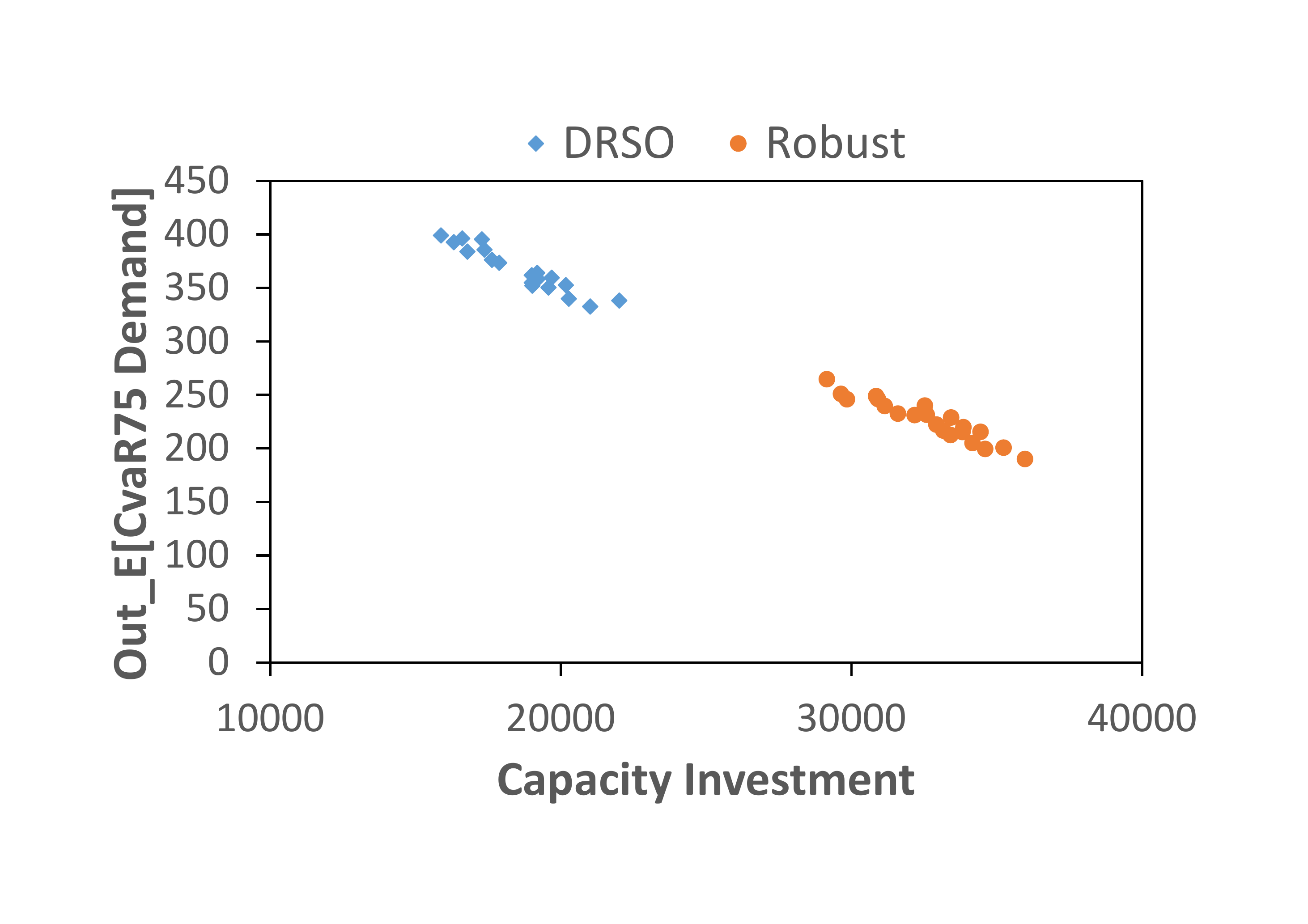}
			\caption{CVaR75 unsatisfied demand.}\label{fig1d}
		\end{subfigure}
		\caption{Expected unsatisfied demand mean and risk measures (commodity A).}\label{fig1}
	\end{center}
\end{figure}

Each of these four charts gives the same observation: Solutions based on the robust model,at higher investment cost, are revealed to be generally more robust than solutions based on DRSO model, which are at lower investment region. For an unexpected surge in traffic, the solution based on robust model will be more able to accommodate this surge compared to the DRSO solution, as its expected outsourced demand is lower. However, in order to allow for a fair comparison of these two models  and to be sure this observation is consistent for both models in all  the investment regions, we scale a robust solution up towards the DRSO solutions area while scaling down a DRSO solution towards the robust solutions area using expected unsatisfied demand that is presented in \autoref{fig1a}. This scaling, which was also used in \cite{Garuba2019a}, is carried out using a representative data point for each model. We scale the $\pmb{x}$ solution in the direction of interest and re-evaluate with the out-of-sample set. In personal experience, this is also common in the industry, where an optimal solution is scaled up or down during planning iteration, thus allowing for a comprehensive comparison of the two models in all the investment regions. The DRSO $\pmb{x}$ solution is multiplied by a factor of $\lambda=1.0$ to $\lambda=1.8$, where $\lambda $ is the scaling factor, with a scale interval of $0.08$, to scale up the solution towards the high investment area while the robust solution is multiplied by a factor of $\lambda=1.0$ to $\lambda=0.5$, with a scale interval of $0.05$, to scale down the solution towards the low investment area. The result of this scaling is as shown in \autoref{fig5} which to the contrary shows that for highly risk-averse metrics (maximum and CVaRs of expected unsatisfied demand), solutions based on the DRSO model are in fact better, having a higher degree of robustness with increasing capacity investment even for high investment. However, for the less risk-averse metric (expected unsatisfied demand), the robust model gives a better solution at higher investment region but with comparable performance for lower investment cost. This observation is consistent for the other two commodities B and C, see \autoref{fig5B} and \autoref{fig9}  in the appendix.

\begin{figure}[htb]
\begin{center}
\begin{subfigure}{.45\textwidth}
\includegraphics[width=1.1\linewidth]{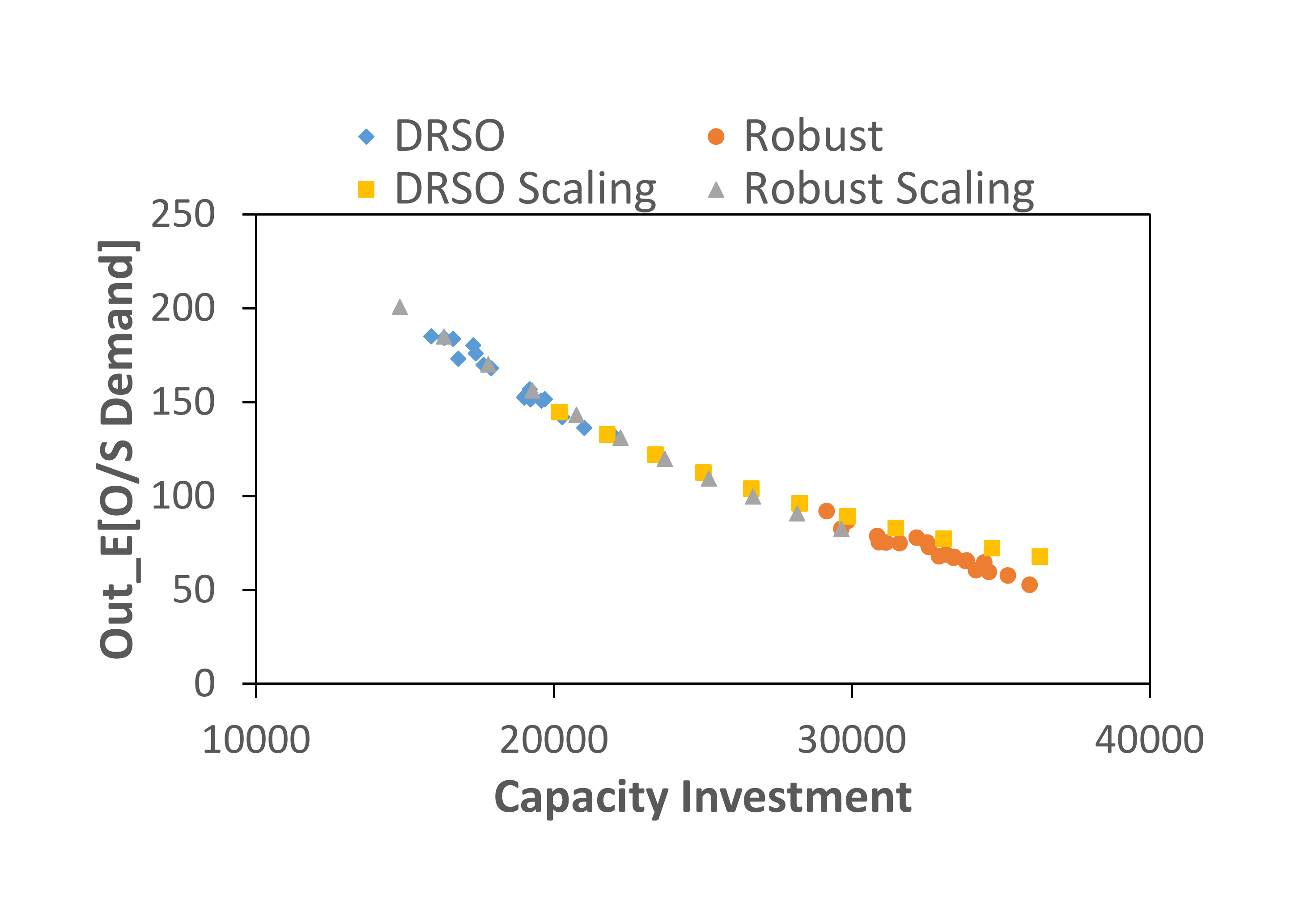}
\caption{Expected unsatisfied demand.}\label{fig5a}

\end{subfigure}
\hspace{1cm}
\begin{subfigure}{.45\textwidth}
\includegraphics[width=1.1\linewidth]{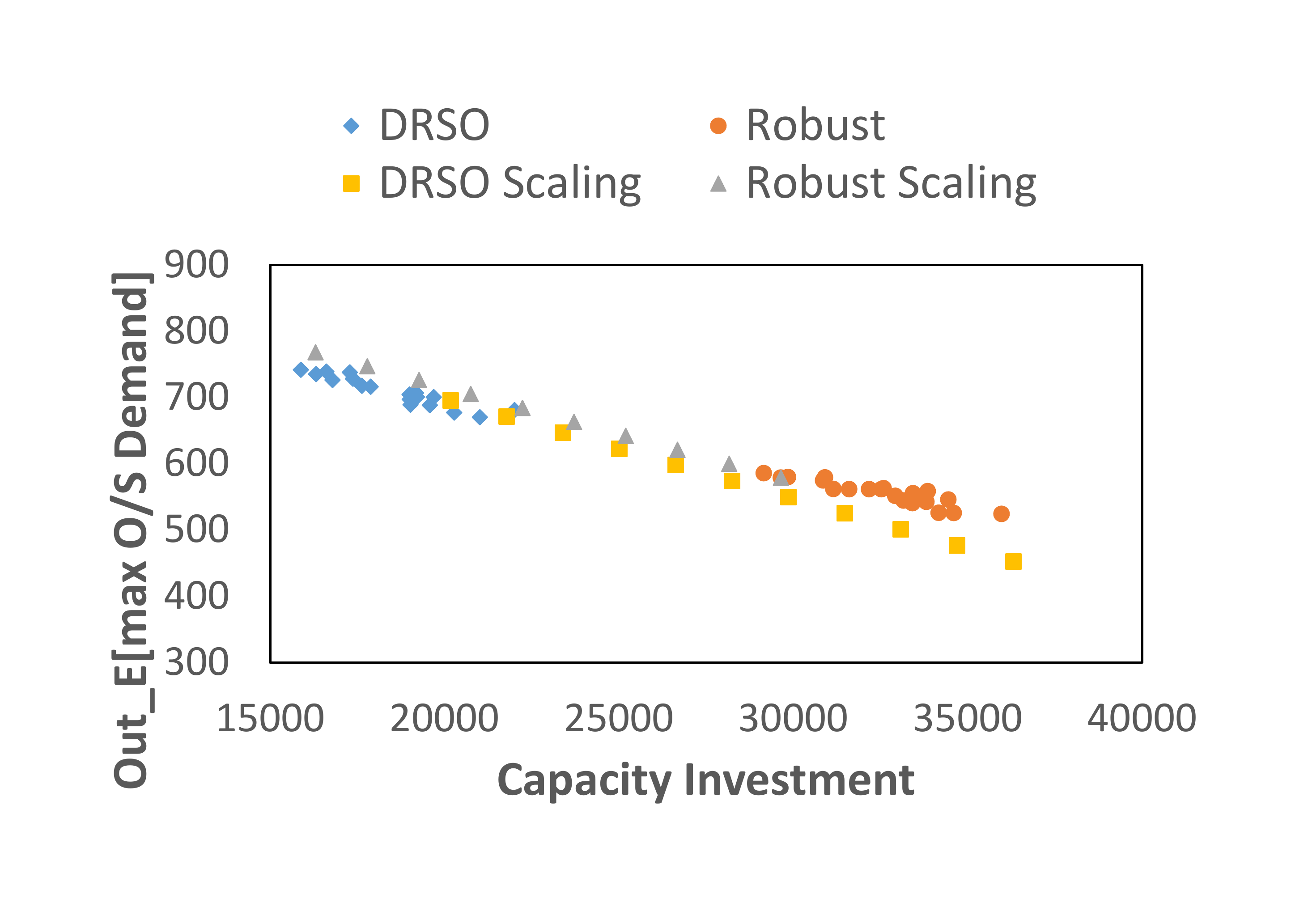}
\caption{Expected maximum unsatisfied demand.}\label{fig5b}
\end{subfigure}
\vspace*{5mm}
\begin{subfigure}{.45\textwidth}
\includegraphics[width=1.1\linewidth]{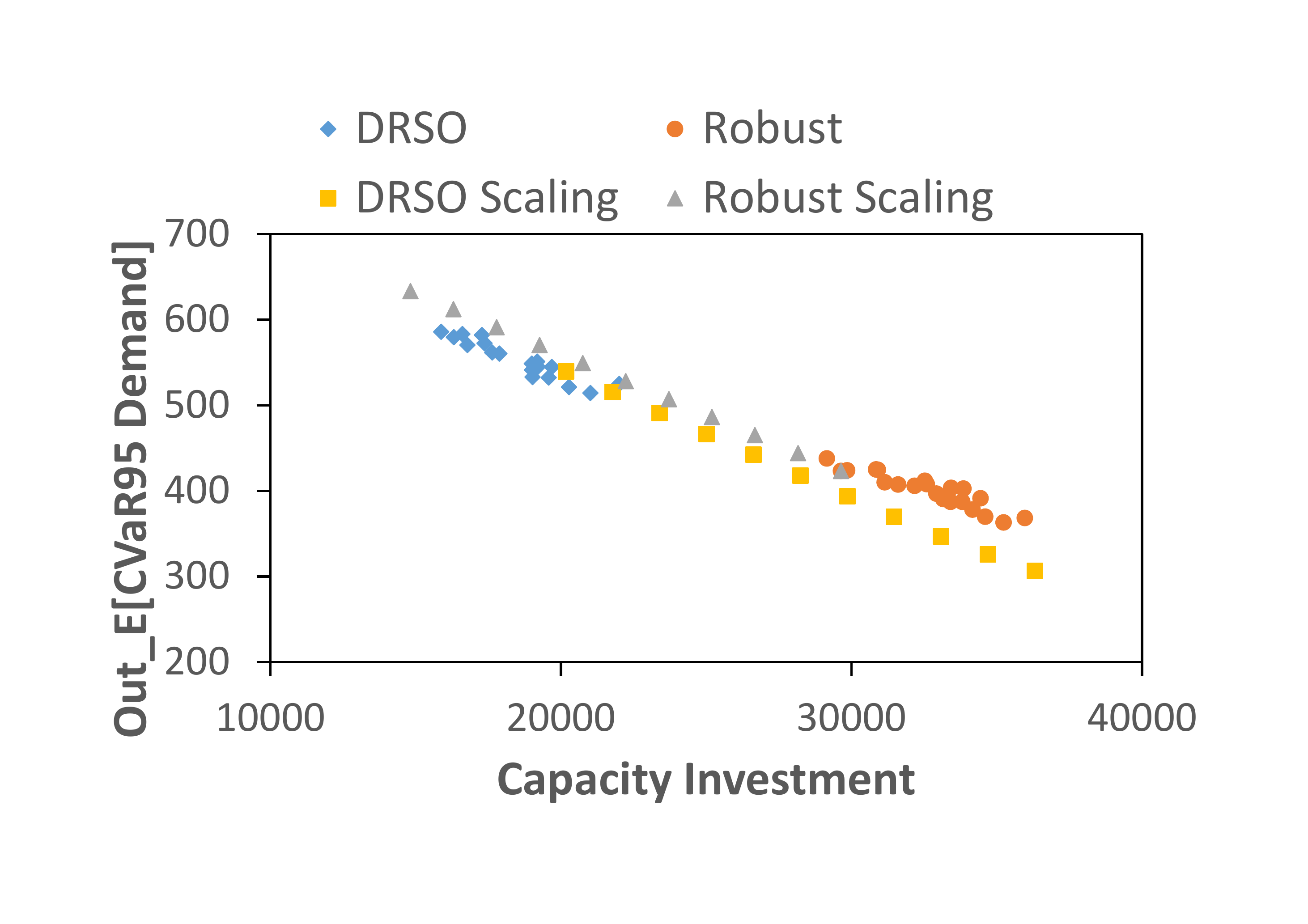}
\caption{CVaR95 unsatisfied demand}\label{fig5c}
\end{subfigure}
\hspace{1cm}
\begin{subfigure}{.45\textwidth}
\includegraphics[width=1.1\linewidth]{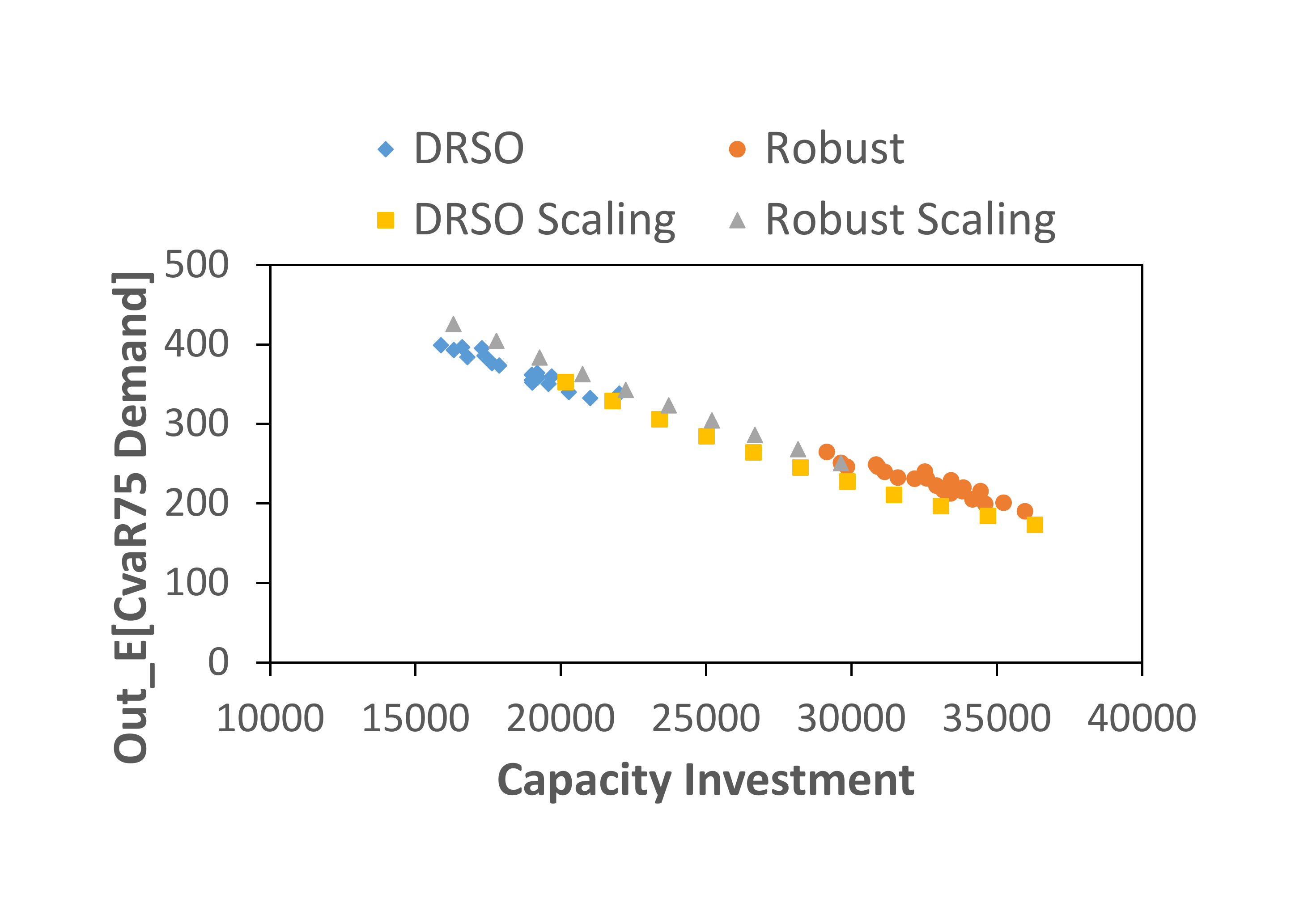}
\caption{CVaR75 unsatisfied demand.}\label{fig5d}
\end{subfigure}
\caption{Performance metric scaling (commodity A).}\label{fig5}
\end{center}
\end{figure}

Next we compare the unsatisfied demand (in-sample) to the expected outsourced demand (out-of-sample) for these two models to see which of these gives a better estimate. The charts in \autoref{fig2a} to \autoref{fig2e} present these results and they show that the DRSO results produce a far better estimate and hence a better predictor of the input variables under data uncertainty. On the average the input/output ratio of the unsatisfied demand (in-sample) to the expected unsatisfied demand (out-of-sample) is around $10.3\%$ (commodities B and C respectively are $10.3\%$ and $4.29\%$) for the DRSO model, while for the robust model, this is as high as $23.43\%$ (commodities B and C respectively are $24.59\%$ and $20.52\%$). Also, regression analysis in \autoref{fig2a} shows that $93\%$ variation in the estimate is explained by the in-sample result for the DRSO model while for the robust model this is approximately $71\%$, see \autoref{fig2b}. The result follows the same pattern for the maximum expected unsatisfied demand with respect to the unsatisfied demand in \autoref{fig2c} for DRSO with $R^2=0.8121$ and \autoref{fig2d} for the robust model with $R^2=0.6048$.

%	\vspace*{5mm}
\begin{figure}[!htb]
\begin{center}
\begin{subfigure}{.45\textwidth}
\includegraphics[width=1.1\linewidth]{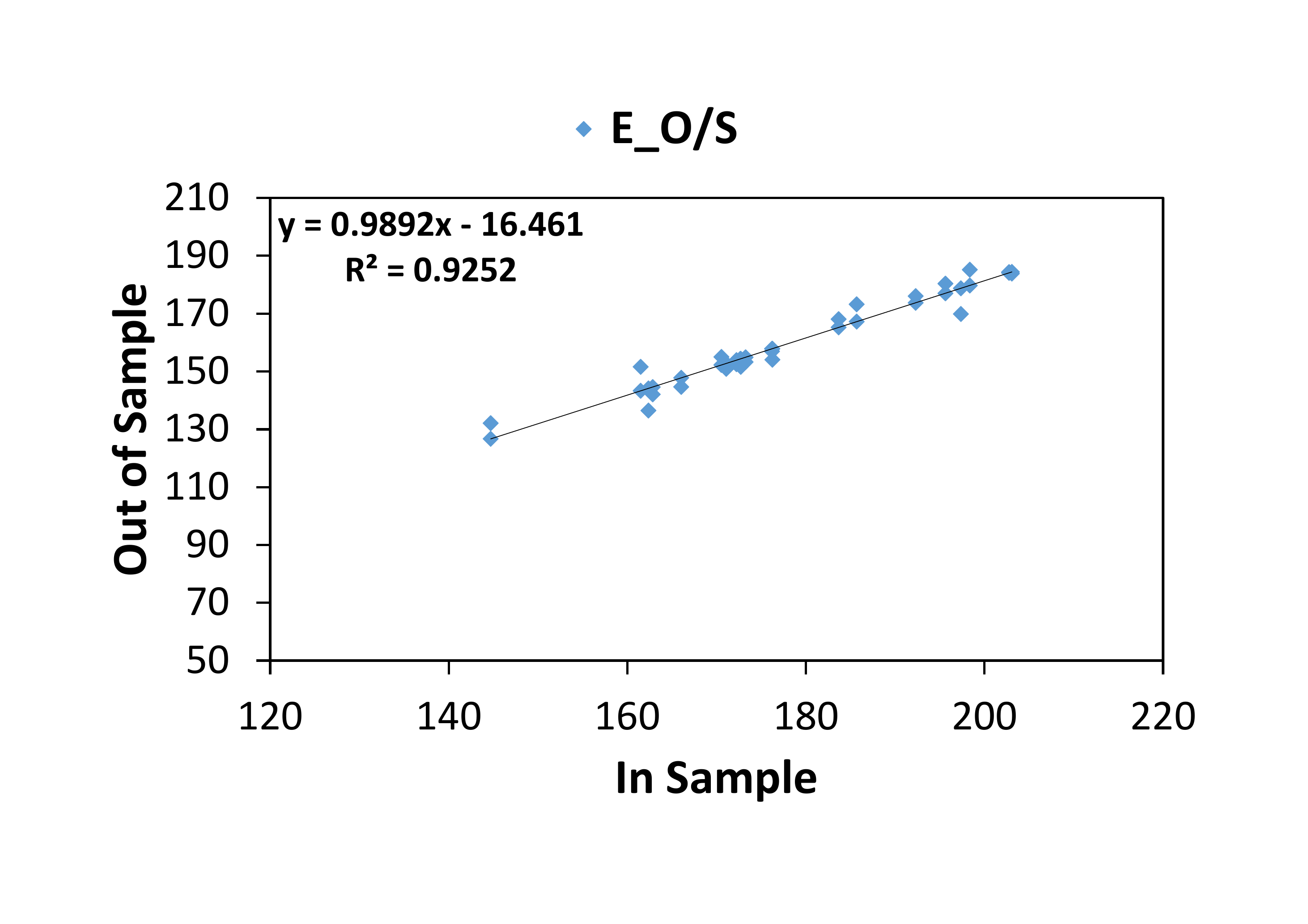}
\caption{DRSO solutions, expected demand gap.}\label{fig2a}
\end{subfigure}
\hspace{1cm}
\begin{subfigure}{.45\textwidth}
\includegraphics[width=1.1\linewidth]{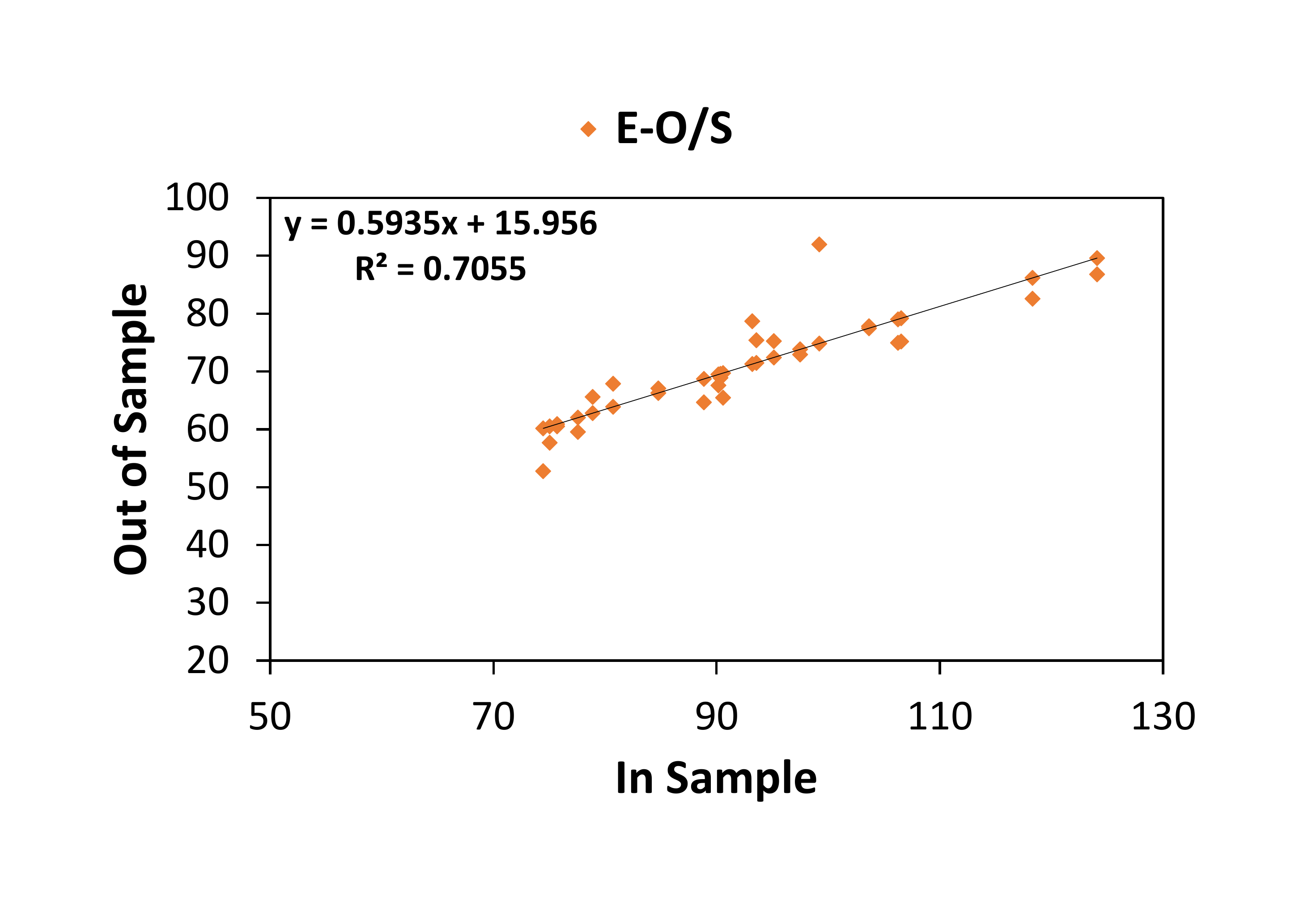}
\caption{Robust solutions, expected demand gap.}\label{fig2b}
\end{subfigure}
\vspace*{5mm}
\begin{subfigure}{.45\textwidth}
\includegraphics[width=1.1\linewidth]{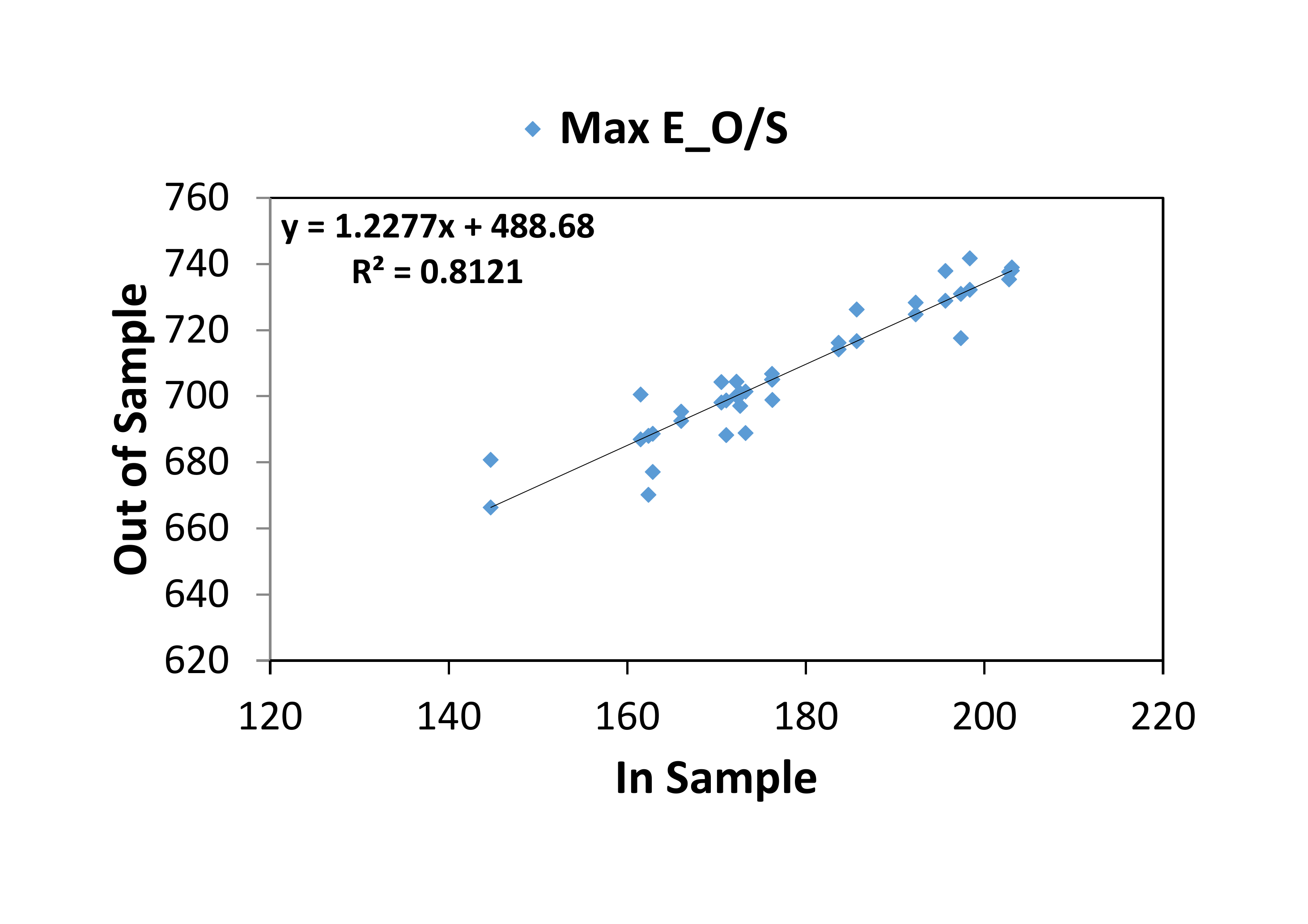}
\caption{DRSO solutions, expected max demand gap.}\label{fig2c}
\end{subfigure}
\hspace{1cm}
\begin{subfigure}{.45\textwidth}
\includegraphics[width=1.1\linewidth]{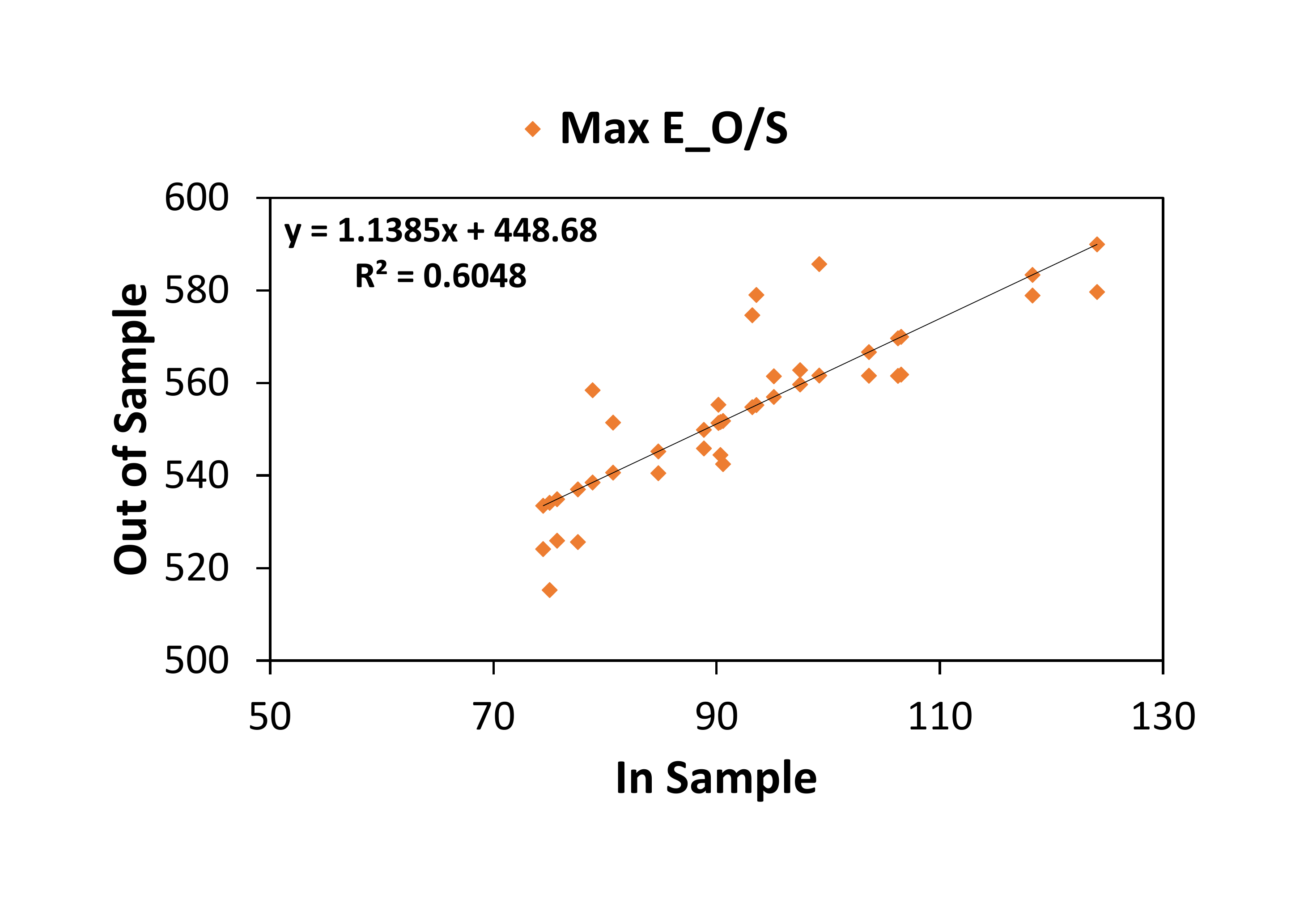}
\caption{Robust solutions, expected max demand gap.}\label{fig2d}
\end{subfigure}
%
% \vspace*{5mm}
%
\begin{subfigure}{.45\textwidth}
\includegraphics[width=1.1\linewidth]{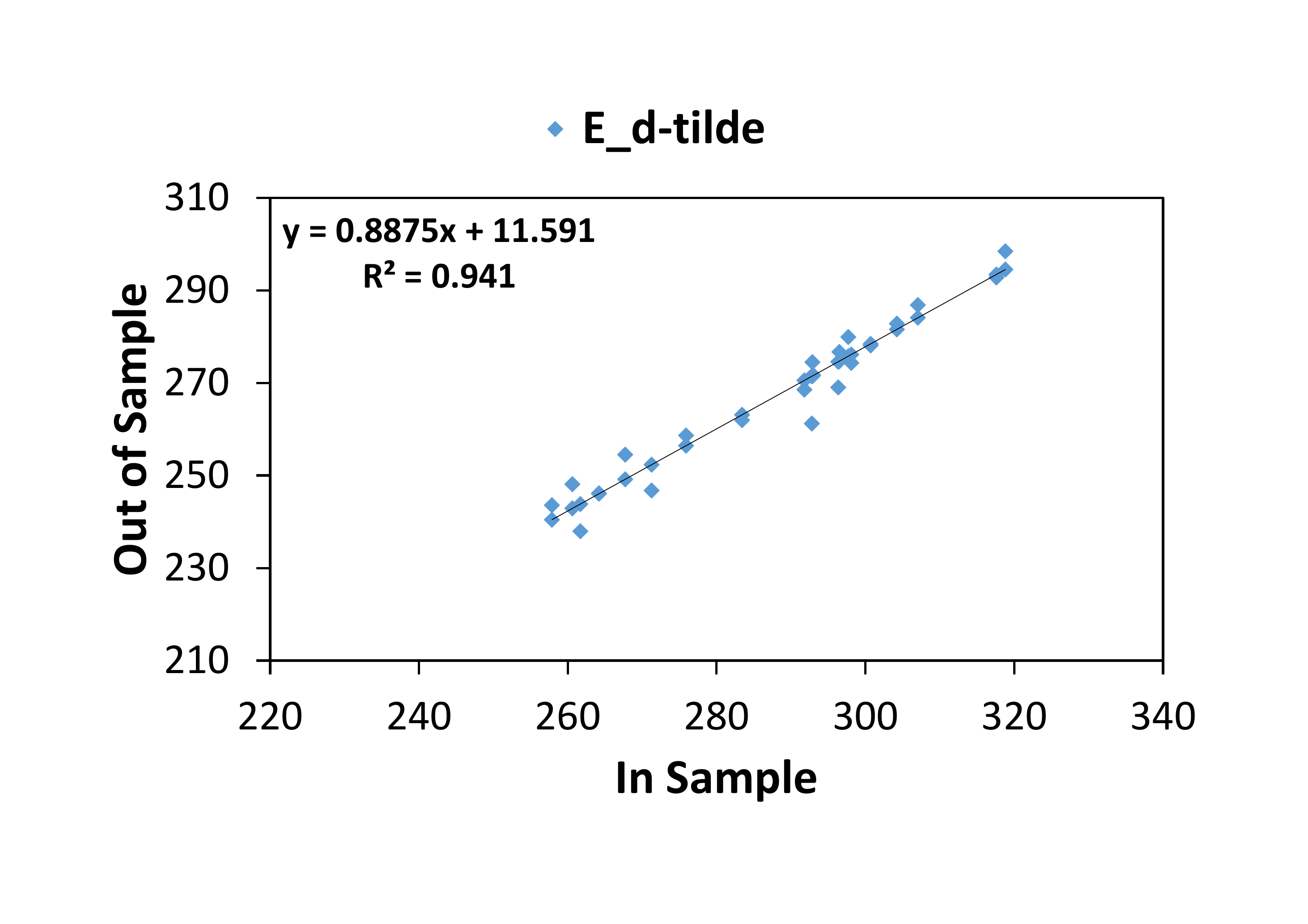}
\caption{DRSO solutions, expected satisfied demand gap.}\label{fig2e}
\end{subfigure}
\caption{Results of out-of-sample prediction (commodity A).}\label{fig2}
\end{center}
\end{figure}

A similar trend is also observed for the satisfied demand ($\tilde{d}$) metric with an I/O gap of $7.35\%$ (commodities B and C respectively are $7.23\%$ and $4.20\%$) while the regression result in \autoref{fig2e} shows that $94.10\%$ (commodities B and C respectively are $94.78\%$ and $98.83\%$) variation in the expected satisfied demand (E[$\tilde{d}$]) is explained by the in-sample result, which means that $5.90\%$ variation in the expected satisfied demand is not due to the in-sample satisfied demand.

Although the robust solutions follow a pessimistic route and build more capacity for the same demand instances, there is no observed significant difference in the average unit cost of capacity for these two models irrespective of commodity type and data set, see \autoref{Tab4}. For commodity A, for instance, with average unit capacity cost of $38.50$ for the DRSO solutions and $38.76$ for the robust solutions.
% , the p-value is $0.00$, hence we cannot reject the null hypothesis. 

Additional insight on this is provided by \autoref{fig3b}, which compares the unit cost per instance and by \autoref{fig3c}, which shows similar linear relationship between capacity and investment for the two models. However, if the capacity installed is compared to the total investment cost, the unit cost of robust solutions becomes cheaper. 
% Results for commodities B and C are detailed in appendices \autoref{AppendA} and \autoref{AppendB} respectively.

%

\begin{figure}[!htb]
\begin{center}
\begin{subfigure}{.45\textwidth}
        \includegraphics[width=1.1\linewidth]{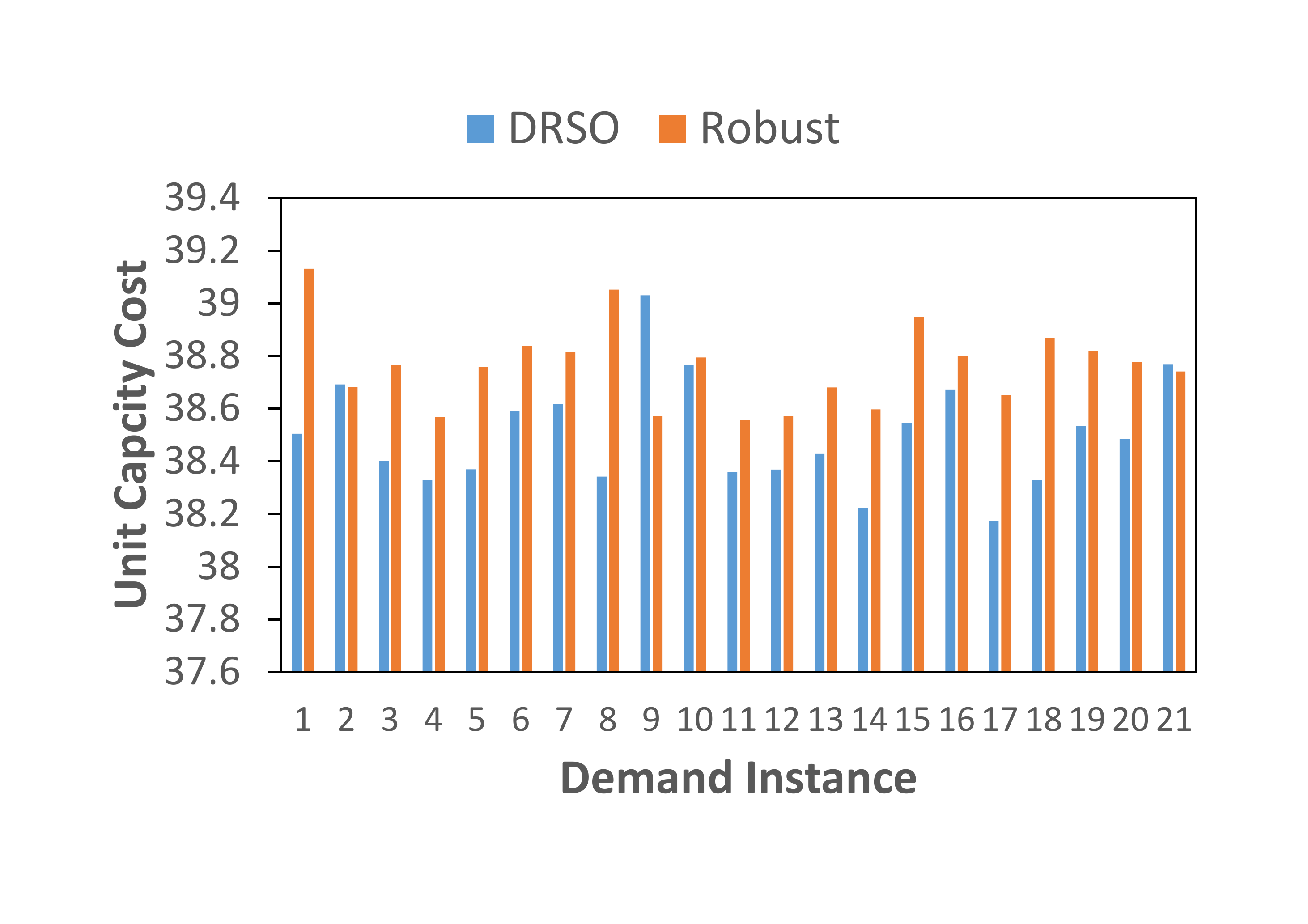}
        \caption{Unit cost of capacity.}\label{fig3b}
\end{subfigure}
\vspace*{5mm}
\begin{subfigure}{.45\textwidth}
\includegraphics[width=1.1\linewidth]{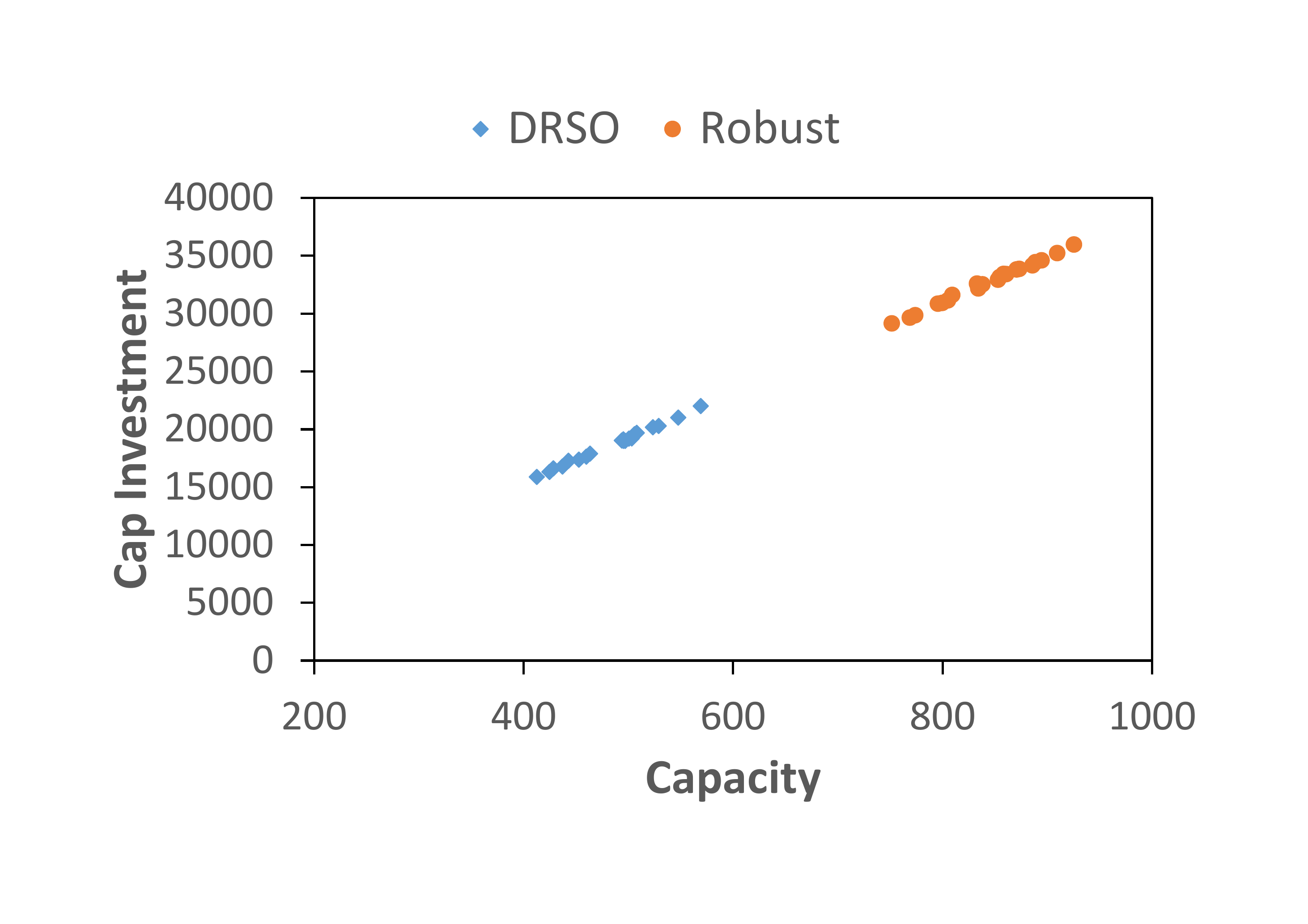}
\caption{Capacity investment.}\label{fig3c}
\end{subfigure}
\hspace{1cm}
\begin{subfigure}{.45\textwidth}
\includegraphics[width=1.1\linewidth]{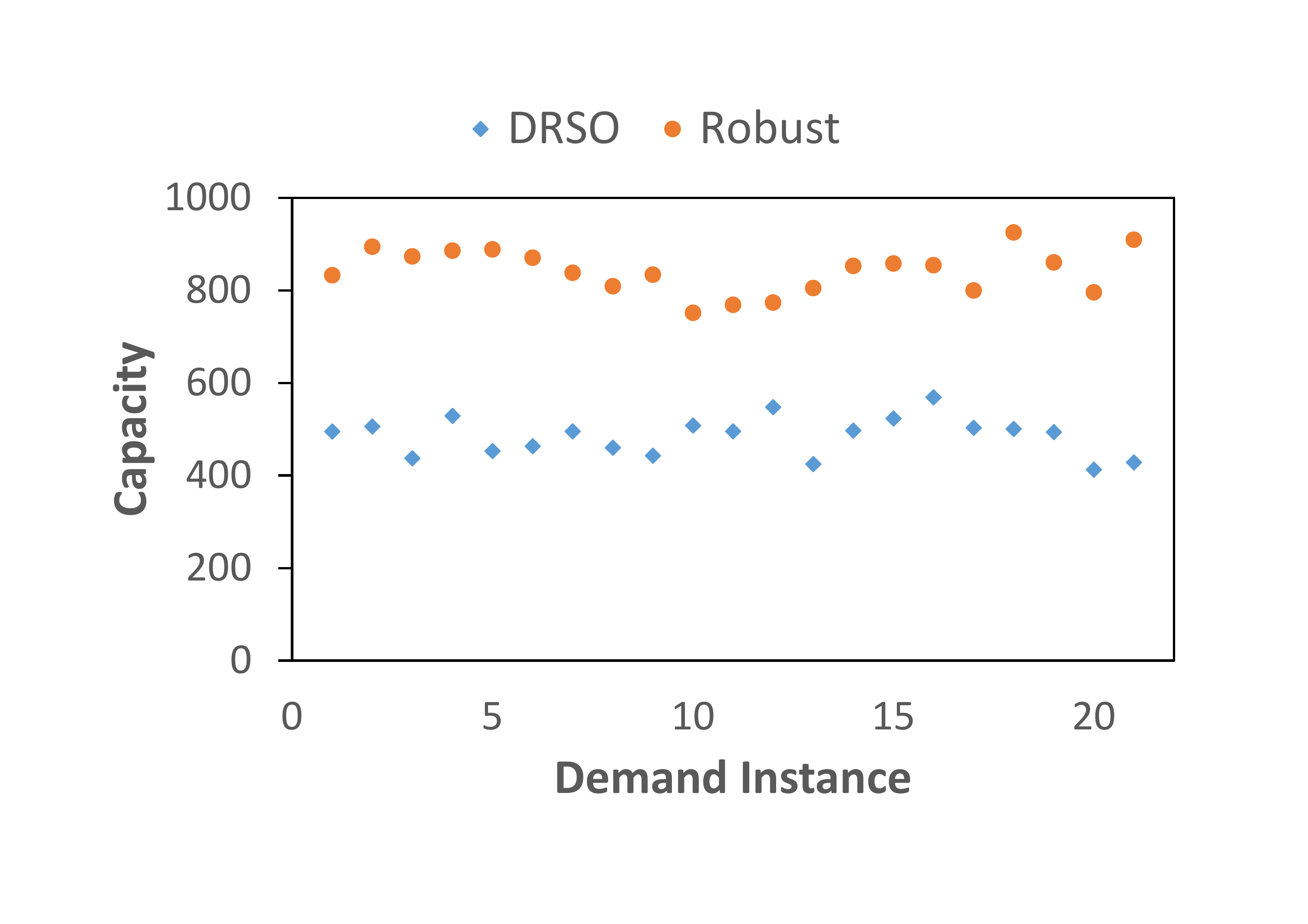}
\caption{Comparison of capacity deployed.}\label{fig3d}
\end{subfigure}
\caption{Investment efficiency (commodity A).}\label{fig3}
\end{center}
\end{figure}

\section{Conclusions}
\label{sec:conclude}

In this paper an efficient approach to distributionally robust network capacity planning under demand ambiguity was proposed. In this approach, we formulate the problem as bilevel optimization, where the worst-case distribution can be characterized by a two-point distribution. This allows us to reformulate the problem as a convex optimization problem, where we need to search over the demand $\tilde{\pmb{d}}$ we intent to satisfy.
% which we then characterized by a two point distribution defined by a one-sided Chebyshev inequality while ensuring the convexity of resulting DRSO model. 
We then solve this new model using Nelder-Mead algorithm, a convex optimization method.

In order to evaluate the quality of our new approach, the resulting model was compared with the robust approach model on the Nobel-US network taken from the SNDlib, \cite{Orlowski2010}, database on a number of performance metrics. Our computational result show that solutions from the DRSO model outperform those from the robust model on all high risk-averse performance metrics. Even in the area of solution robustness and quality where the later is generally of a higher robustness, the result scaling shows that solutions from DRSO outperform the robust model in this area on the high risk measures. The robust, however, performs better on the low risk-averse metric.

One interesting result which was also reported earlier by \cite{Nakao2017} using a different metric is the prediction accuracy of the DRSO model with over $90\%$ expected result variability explained by model result whereas the Robust model cannot be relied upon having a prediction accuracy of approximately $57\%$ and higher. It was also noted that despite the performance difference, the actual unit cost of capacity for this two model is not significantly different.

Moreover, the solutions based on the DRSO were found to be less conservative when compared to Robust model, irrespective of the observed demand instance, data set used and the commodity type, with lower total and capacity investment.

\begin{appendices}
\section{Proof of \cref{Nature1}}
\label{Nat_proof}
\begin{proof}
First suppose that $\tilde{d} \le \frac{\mu^2 + \sigma^2}{2\mu} $.\\	Here, Nature is characterized by a two point distribution defined by a one-sided Chebyshev inequality below;
	\begin{equation}
	T=
	\begin{cases}
	0 \quad &\mbox{w.p.} \hspace{0.3pc}  \  \frac{\sigma^2}{\sigma^2 +\mu^2} \\
	\frac{\sigma^2+\mu^2}{\mu} \quad &\mbox{w.p.} \hspace{0.3pc} \frac{\mu^2}{\sigma^2 + \mu^2} 
	\end{cases} 
	\end{equation}
	with mass $\frac{\sigma^2}{\sigma^2 +\mu^2} $ at $0$ and vice-versa.
	Hence, Nature becomes,
	
		\begin{equation*}
		N(\tilde{d},\chi_1) = (\chi_1-\tilde{d})\left(\frac{\mu^2}{\sigma^2 + \mu^2}\right)\label{Nt1}\\
		\end{equation*}
		where $\chi_1$ is the upper support, hence;
		\begin{align*}
		N(\tilde{d})& = \left(\frac{\sigma^2+\mu^2}{\mu} -\tilde{d}\right)\left(\frac{\mu^2}{\sigma^2 + \mu^2}\right)\\
		N(\tilde{d})&= \mu-\tilde{d}\left(\frac{\mu^2}{\sigma^2 + \mu^2}\right)
		\end{align*}
	\\
	%\smallskip
	Suppose now that  $\tilde{d} > \frac{\mu^2 + \sigma^2}{2\mu} $. Let $\chi_2$ be the upper support point of nature's distribution. Nature objective function can be written as:
	\begin{equation}
	N(\tilde{d},\chi_2) = (\chi_2-\tilde{d})(1-q),\label{Nt}
	\end{equation}
	where $q$ is probability mass on lower support point. 
	\smallskip
	We can express $q$ in terms of $\chi_2$ using the fact that 
	\begin{equation*}
	\chi_2 = \mu + \sigma \sqrt{\frac{q}{1-q}}, 
	\end{equation*}
	which gives
	\begin{equation*}
	q = \frac{(\chi_2-\mu)^2}{\sigma^2+(\chi_2-\mu)^2}.
	\end{equation*}
	which can be derived from the following two equations, where $\alpha$ is the lower support point ;
	\begin{align*}
	p\alpha+(1-p)\chi_2& = \mu \\ 
	p\alpha^2+(1-p)\chi_2^2&=\mu^2+\sigma^2 
	\end{align*}
	
	Then, \autoref{Nt} becomes;
	\begin{equation}
	N(\tilde{d},\chi_2)=(\chi_2-\tilde{d})\left (1-\frac{(\chi_2-\mu)^2}{(\chi_2-\mu)^2 + \sigma^2} \right ) \label{Nt3}
	\end{equation}
	
	% \begin{claim}
	%We now show that $N(\tilde{d},\chi)$ is concave in $[\mu, u]$ if $u<\sigma^2 + \mu^2$.
	% \end{claim}
	% \textbf{Proof of Claim} \quad 
	The value of $\chi$ at the root maximizes the above function. To this end, the  first derivative of $N$ w.r.t $\chi$ is 
	\begin{equation*}
	\frac{\partial N}{\partial \chi_2} =  -\frac{\sigma^2(\chi_2^2 - 2\tilde{d}\chi_2 + 2\mu \tilde{d} - \mu^2 - \sigma^2)}{(\chi_2^2-2\mu \chi_2 + \sigma^2 + \mu^2)^2}, 
	\end{equation*} 
	%and the second derivative is 
	%\begin{equation*}
	%\frac{\partial^2 N}{\partial^2 \chi} = \frac{2\sigma^2 (2\chi-2\mu) ((\chi^2 - 2\tilde{d}\chi + 2\mu \tilde{d} - \mu^2 - \sigma^2)}{(\chi^2-2\mu \chi + \sigma^2 + \mu^2)^3} - \frac{\sigma^2(2\chi-2\tilde{d})}{(\chi^2-2\mu \chi + \sigma^2 + \mu^2)^3}
	%\end{equation*}
	%The denominator in second derivative are positive as they can be written as $(\chi-\mu)^2-\mu^2+\sigma^2+\mu^2$. Since $\chi>\mu$, this implies second derivative is non-positive if $\chi<\mu^2 + \sigma^2$. \\ % \qed \\
	\smallskip
	Setting $\frac{\partial N}{\partial \chi_2} =0$, produces a root at $\chi_2 = \tilde{d}+\sqrt{(\tilde{d}-\mu)^2 + \sigma^2}$ which when substituted in \autoref{Nt3} gives
	\begin{equation*}
	N(\tilde{d})=\left(\tilde{d}+\sqrt{(\tilde{d}-\mu)^2 + \sigma^2}-\tilde{d}\right)\left (1-\frac{\left(\tilde{d}+\sqrt{(\tilde{d}-\mu)^2 + \sigma^2}-\mu\right)^2}{\left(\tilde{d}+\sqrt{(\tilde{d}-\mu)^2 + \sigma^2}-\mu\right)^2 + \sigma^2} \right ) \label{Nt2}
	\end{equation*}
	Simplifying the above equation and re-arranging terms result in the below;
	\begin{equation*}
	N(\tilde{d})=1/2 \left(\mu-\tilde{d}+\sqrt{(\tilde{d}-\mu)^2 +\sigma^2}\right)
	\end{equation*}
	and this completes the proof of \autoref{fln1_3}.
	
	\begin{equation*}
	N(\tilde{d})=
	\begin{cases}
	1/2 \left(\mu-\tilde{d}+\sqrt{(\tilde{d}-\mu)^2 +\sigma^2}\right) \quad &\mbox{when} \hspace{0.3pc}  \ \tilde{d} > \frac{\mu^2 + \sigma^2}{2\mu} \label{fln1_2}\\
	\mu - \tilde{d}\left(\frac{\mu^2}{\mu^2 + \sigma^2}\right) \quad &\mbox{when} \hspace{0.3pc} \tilde{d} \le \frac{\mu^2 + \sigma^2}{2\mu} 
	\end{cases} 
	\end{equation*}
\end{proof}\\
%\marc{It's interesting to note that ~\cite{lo1987semi} had a proof of the above equation from the context of option prices leveraging on proof in \cite{Scarf}. }

\section{Results for commodity type B.}
\label{AppendA}

We present additional results for commodity type B in a similar way to the presentation of results for commodity type A in the main text. \autoref{Tab2B} and \autoref{Tab3B} show key metrics for the 21 repetitions using the robust and the DRSO model, respectively.

\begin{table}[htbp]\footnotesize
	\centering
	%\tbl{Impact of Choice of k on the presorted original data.}
	\caption{Robust model results for commodity type B.}
	\begin{tabular}{rrcrcrrrr}%ccc}
		\toprule 
		&\multicolumn{2}{c}{In Sample} 
		&\multicolumn{5}{c}{Out of Sample}&\\	
		\cmidrule (r) {2-3} \cmidrule (l) {4-8}
		Inst. &    Cap. Inv.&	O/S Demand&	E[O/S]& E[Max O/S]&	CVaR95&	CVaR75&	E[$\tilde{d}$]&	CapAdd \\
		\midrule
	1	&	 26,955.23 	&	 138.21 	 & 	 94.47 	 & 	 614.68 	 & 	 458.99 	 & 	 272.58 	 & 	 342.28 	 & 	 682.99 	\\
	2	&	 27,340.83 	&	 122.58 	 & 	 95.06 	 & 	 592.60 	 & 	 436.90 	 & 	 259.98 	 & 	 358.40 	 & 	 690.01 	\\
	3	&	 27,966.59 	&	 109.42 	 & 	 88.49 	 & 	 601.61 	 & 	 445.92 	 & 	 262.78 	 & 	 349.14 	 & 	 698.86 	\\
	4	&	 30,966.21 	&	 96.20 	 & 	 73.17 	 & 	 569.72 	 & 	 414.03 	 & 	 232.73 	 & 	 356.59 	 & 	 787.31 	\\
	5	&	 27,081.86 	&	 135.60 	 & 	 93.40 	 & 	 600.58 	 & 	 444.89 	 & 	 262.61 	 & 	 342.51 	 & 	 698.33 	\\
	6	&	 32,108.77 	&	 98.27 	 & 	 71.99 	 & 	 540.65 	 & 	 384.96 	 & 	 212.76 	 & 	 384.38 	 & 	 824.14 	\\
	7	&	 25,174.15 	&	 144.43 	 & 	 109.13 	 & 	 615.47 	 & 	 459.78 	 & 	 280.33 	 & 	 340.72 	 & 	 627.89 	\\
	8	&	 28,144.28 	&	 125.35 	 & 	 85.90 	 & 	 588.84 	 & 	 433.15 	 & 	 250.75 	 & 	 363.18 	 & 	 717.50 	\\
	9	&	 33,905.19 	&	 84.59 	 & 	 62.63 	 & 	 531.42 	 & 	 375.74 	 & 	 206.53 	 & 	 394.00 	 & 	 871.49 	\\
	10	&	 25,598.99 	&	 127.12 	 & 	 101.55 	 & 	 622.91 	 & 	 467.21 	 & 	 284.64 	 & 	 329.08 	 & 	 656.61 	\\
	11	&	 30,395.00 	&	 100.40 	 & 	 73.78 	 & 	 554.45 	 & 	 399.09 	 & 	 223.45 	 & 	 379.72 	 & 	 774.90 	\\
	12	&	 29,084.16 	&	 112.90 	 & 	 84.14 	 & 	 583.00 	 & 	 427.31 	 & 	 244.61 	 & 	 358.90 	 & 	 743.51 	\\
	13	&	 30,291.89 	&	 109.15 	 & 	 80.03 	 & 	 567.86 	 & 	 412.16 	 & 	 235.45 	 & 	 352.00 	 & 	 764.24 	\\
	14	&	 27,374.82 	&	 111.42 	 & 	 87.46 	 & 	 593.87 	 & 	 438.18 	 & 	 254.00 	 & 	 356.60 	 & 	 701.42 	\\
	15	&	 28,590.93 	&	 112.90 	 & 	 82.91 	 & 	 593.14 	 & 	 437.45 	 & 	 251.73 	 & 	 351.84 	 & 	 731.89 	\\
	16	&	 30,027.87 	&	 95.84 	 & 	 75.90 	 & 	 560.51 	 & 	 404.82 	 & 	 226.01 	 & 	 367.25 	 & 	 764.01 	\\
	17	&	 27,506.54 	&	 111.69 	 & 	 92.84 	 & 	 603.45 	 & 	 447.76 	 & 	 266.75 	 & 	 337.79 	 & 	 703.11 	\\
	18	&	 27,813.21 	&	 121.74 	 & 	 89.64 	 & 	 593.74 	 & 	 438.04 	 & 	 253.97 	 & 	 354.84 	 & 	 709.47 	\\
	19	&	 31,422.87 	&	 100.67 	 & 	 78.99 	 & 	 550.96 	 & 	 395.27 	 & 	 218.36 	 & 	 357.05 	 & 	 800.68 	\\
	20	&	 26,886.16 	&	 118.71 	 & 	 95.24 	 & 	 619.57 	 & 	 463.88 	 & 	 278.46 	 & 	 341.75 	 & 	 685.73 	\\
	21	&	 27,750.82 	&	 123.86 	 & 	 93.98 	 & 	 594.73 	 & 	 439.04 	 & 	 260.02 	 & 	 343.73 	 & 	 701.20 	\\
		\bottomrule
	\end{tabular}
	%\caption{Experimental setup for generating 120 problem instances for each network.}
	\label{Tab2B}
\end{table}

\begin{table}[htbp]\footnotesize
	\centering
	%\tbl{Impact of Choice of k on the presorted original data.}
	\caption{DRSO model results for commodity type B.}
	\begin{tabular}{rrrcrcrrrr}%ccc}
		\toprule 
		&&\multicolumn{2}{l}{In Sample} 
		&\multicolumn{5}{c}{Out of Sample}&\\ 
		\cmidrule (r) {2-4} \cmidrule (l) {5-9}
		Inst. &    Cap. Inv.& Nature& $\tilde{d}$ &E[O/S]& E[ Max O/S]& CVaR95& CVaR75& E[$\tilde{d}$]& CapAdd \\
		\midrule
	1	&	 17,917.30 	 & 	 197.00 	 & 	 272.84 	 & 	 169.65 	 & 	 726.73 	 & 	 571.04 	 & 	 384.06 	 & 	 252.57 	 & 	 450.22 	\\
	2	&	 20,356.05 	 & 	 156.43 	 & 	 316.57 	 & 	 140.34 	 & 	 683.00 	 & 	 527.31 	 & 	 340.33 	 & 	 293.09 	 & 	 513.17 	\\
	3	&	 19,387.54 	 & 	 164.77 	 & 	 292.85 	 & 	 153.23 	 & 	 706.72 	 & 	 551.03 	 & 	 364.05 	 & 	 273.34 	 & 	 491.20 	\\
	4	&	 18,059.27 	 & 	 191.98 	 & 	 271.09 	 & 	 173.10 	 & 	 728.48 	 & 	 572.79 	 & 	 386.11 	 & 	 241.03 	 & 	 460.61 	\\
	5	&	 19,949.98 	 & 	 166.55 	 & 	 306.07 	 & 	 144.27 	 & 	 693.50 	 & 	 537.81 	 & 	 350.83 	 & 	 279.57 	 & 	 515.17 	\\
	6	&	 21,042.16 	 & 	 153.28 	 & 	 319.69 	 & 	 132.98 	 & 	 679.88 	 & 	 524.19 	 & 	 337.21 	 & 	 289.33 	 & 	 538.45 	\\
	7	&	 20,756.93 	 & 	 149.48 	 & 	 323.17 	 & 	 133.46 	 & 	 676.40 	 & 	 520.71 	 & 	 333.73 	 & 	 299.24 	 & 	 527.19 	\\
	8	&	 19,641.00 	 & 	 171.67 	 & 	 310.38 	 & 	 146.93 	 & 	 689.19 	 & 	 533.50 	 & 	 346.52 	 & 	 292.60 	 & 	 494.15 	\\
	9	&	 19,625.72 	 & 	 161.37 	 & 	 310.06 	 & 	 143.92 	 & 	 689.51 	 & 	 533.82 	 & 	 346.83 	 & 	 291.45 	 & 	 494.48 	\\
	10	&	 19,318.96 	 & 	 153.30 	 & 	 305.69 	 & 	 145.59 	 & 	 693.88 	 & 	 538.19 	 & 	 351.21 	 & 	 283.81 	 & 	 493.99 	\\
	11	&	 18,554.09 	 & 	 179.70 	 & 	 291.15 	 & 	 158.88 	 & 	 708.42 	 & 	 552.73 	 & 	 366.19 	 & 	 263.01 	 & 	 471.08 	\\
	12	&	 15,903.68 	 & 	 203.29 	 & 	 267.32 	 & 	 186.02 	 & 	 732.25 	 & 	 576.56 	 & 	 389.58 	 & 	 253.73 	 & 	 401.86 	\\
	13	&	 16,550.21 	 & 	 197.09 	 & 	 275.00 	 & 	 175.14 	 & 	 724.57 	 & 	 568.88 	 & 	 382.16 	 & 	 258.62 	 & 	 420.55 	\\
	14	&	 17,326.99 	 & 	 183.18 	 & 	 280.58 	 & 	 166.19 	 & 	 718.99 	 & 	 563.29 	 & 	 376.65 	 & 	 254.37 	 & 	 448.43 	\\
	15	&	 18,206.14 	 & 	 177.24 	 & 	 290.24 	 & 	 161.64 	 & 	 709.33 	 & 	 553.64 	 & 	 366.66 	 & 	 268.19 	 & 	 469.62 	\\
	16	&	 20,826.06 	 & 	 139.73 	 & 	 329.30 	 & 	 131.07 	 & 	 670.27 	 & 	 514.58 	 & 	 327.60 	 & 	 303.62 	 & 	 529.05 	\\
	17	&	 20,943.80 	 & 	 152.76 	 & 	 323.21 	 & 	 133.57 	 & 	 676.36 	 & 	 520.66 	 & 	 333.68 	 & 	 301.89 	 & 	 526.64 	\\
	18	&	 21,479.18 	 & 	 146.50 	 & 	 332.86 	 & 	 127.91 	 & 	 666.71 	 & 	 511.02 	 & 	 324.06 	 & 	 309.97 	 & 	 542.32 	\\
	19	&	 19,553.38 	 & 	 159.83 	 & 	 314.98 	 & 	 142.59 	 & 	 684.59 	 & 	 528.89 	 & 	 342.11 	 & 	 296.98 	 & 	 498.68 	\\
	20	&	 20,173.40 	 & 	 156.14 	 & 	 312.15 	 & 	 142.32 	 & 	 687.42 	 & 	 531.73 	 & 	 344.75 	 & 	 287.68 	 & 	 510.07 	\\
	21	&	 18,786.10 	 & 	 164.48 	 & 	 309.88 	 & 	 151.53 	 & 	 689.69 	 & 	 534.00 	 & 	 347.02 	 & 	 293.73 	 & 	 479.52 	\\
		\bottomrule
	\end{tabular}
	%\caption{Experimental setup for generating 120 problem instances for each network.}
	\label{Tab3B}
\end{table}

\autoref{fig1B}, \autoref{fig5B} and \autoref{figB2} correspond to \autoref{fig1}, \autoref{fig5} and \autoref{fig2} using commodity type B instead of A.

\begin{figure}[htbp]
	\begin{center}
		\begin{subfigure}{.45\textwidth}
			\includegraphics[width=1.1\linewidth]{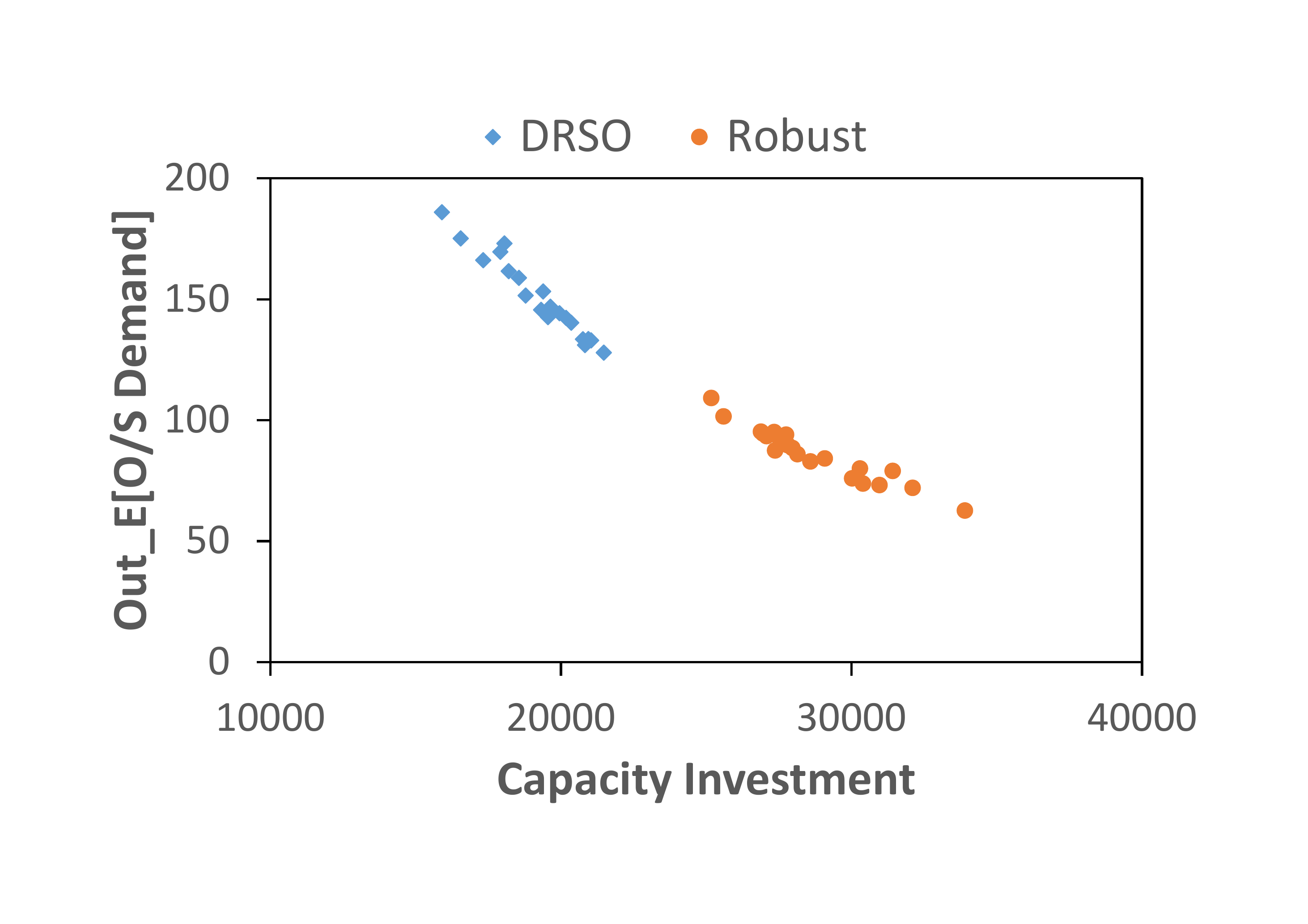}
			\caption{Expected unsatisfied demand.}\label{fig1Ba}
		\end{subfigure}
		\hspace{1cm}
		\begin{subfigure}{.45\textwidth}
			\includegraphics[width=1.1\linewidth]{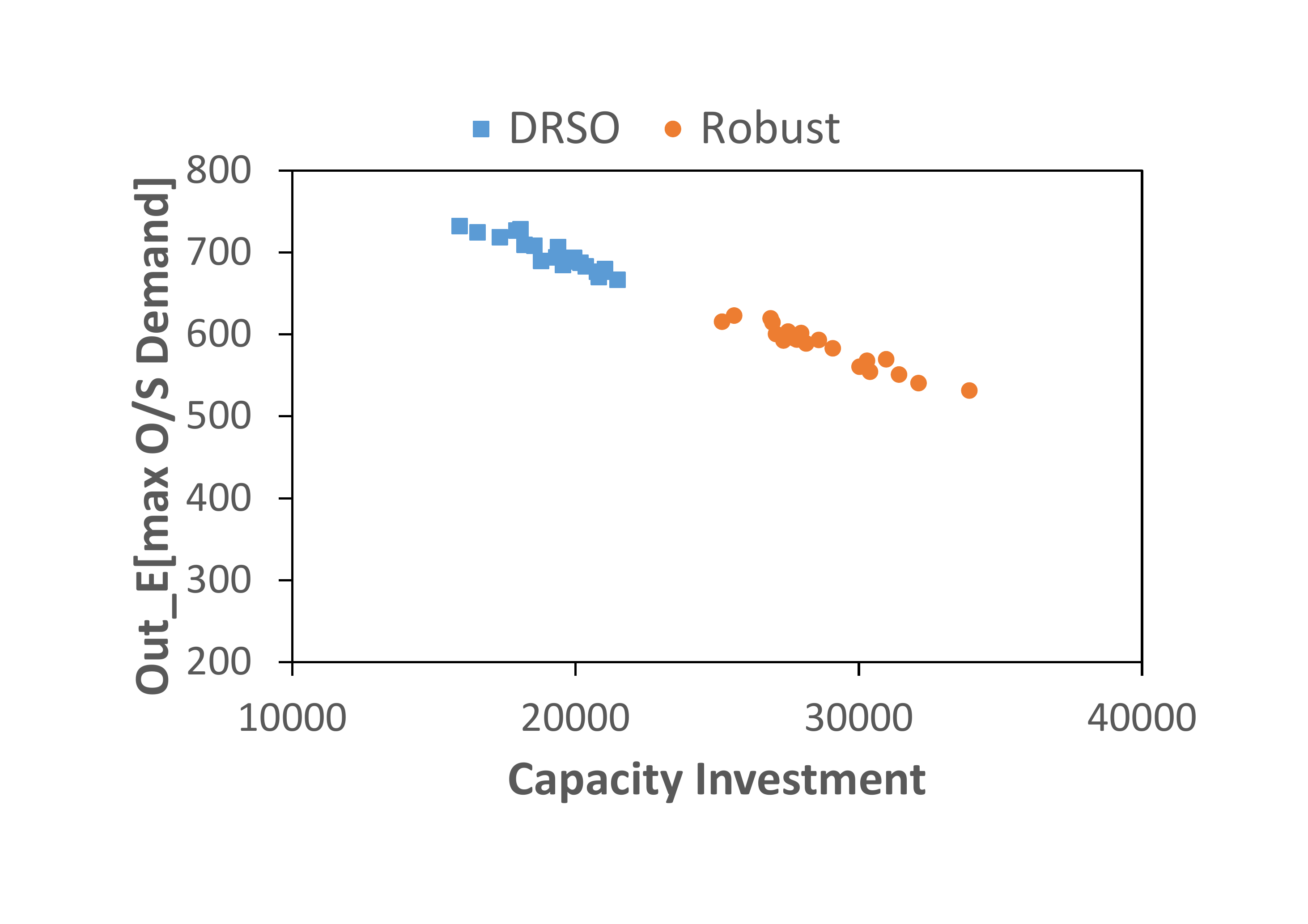}
			\caption{Expected max unsatisfied demand.}\label{fig1Bb}
		\end{subfigure}
		\vspace*{5mm}
		\begin{subfigure}{.45\textwidth}
			\includegraphics[width=1.1\linewidth]{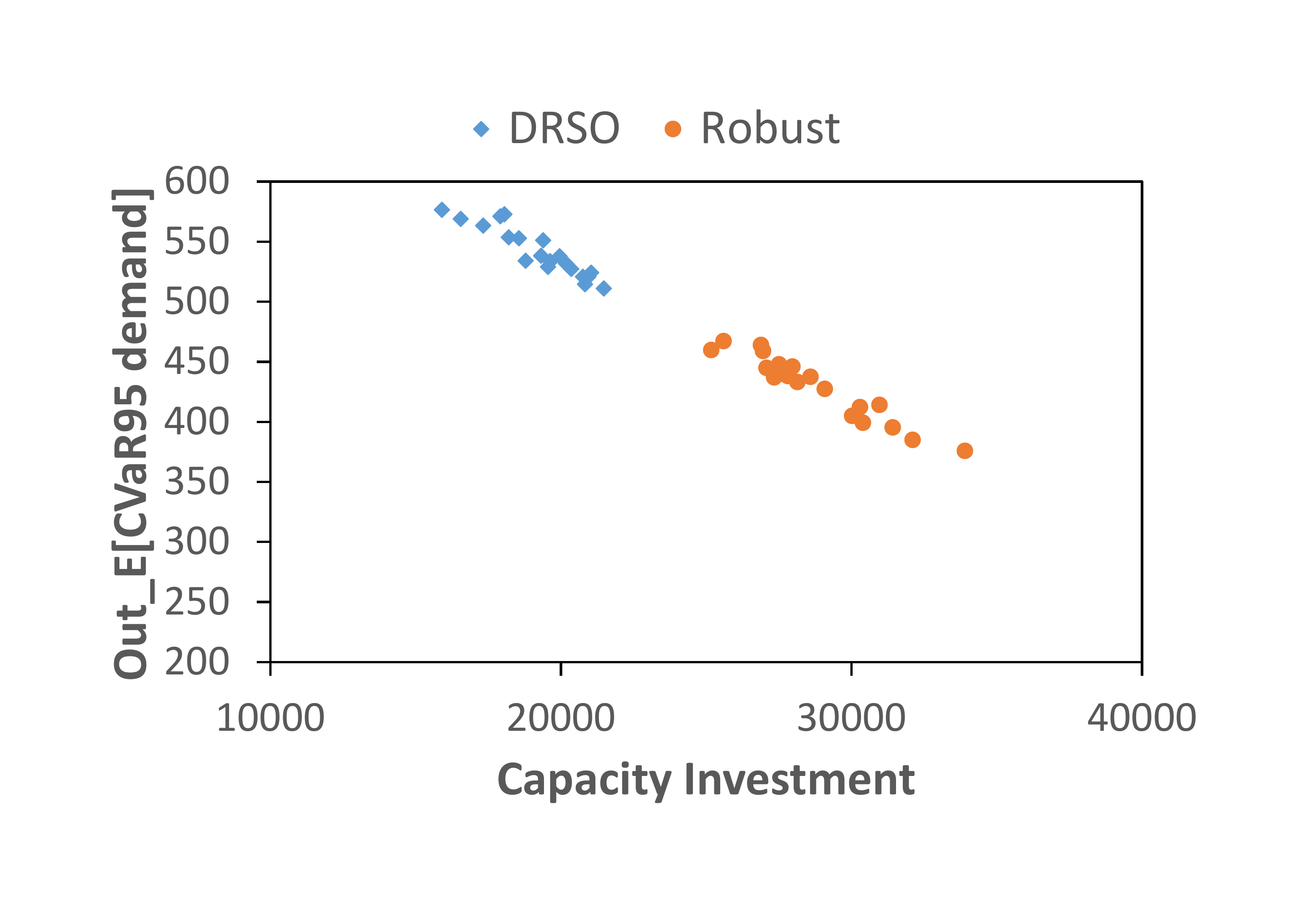}
			\caption{CVaR95 unsatisfied demand.}\label{figB1c}
		\end{subfigure}
		\hspace{1cm}
		\begin{subfigure}{.45\textwidth}
			\includegraphics[width=1.1\linewidth]{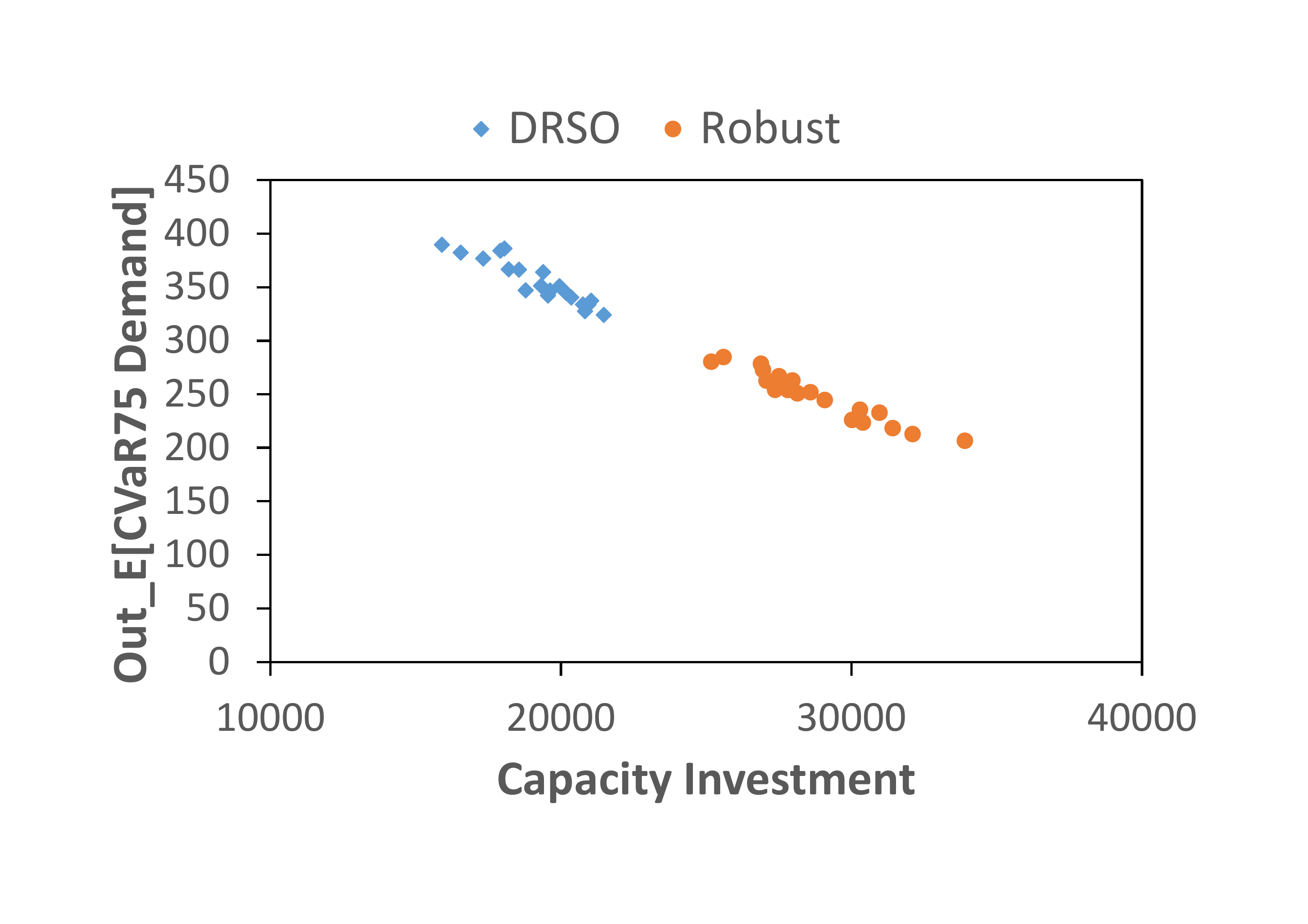}
			\caption{CVaR75 unsatisfied demand.}\label{fig1Bd}
		\end{subfigure}
		\caption{Expected unsatisfied demand mean and risk measures (commodity B).}\label{fig1B}
	\end{center}
\end{figure}

\begin{figure}[htbp]
	\begin{center}
		\begin{subfigure}{.45\textwidth}
			\includegraphics[width=1.1\linewidth]{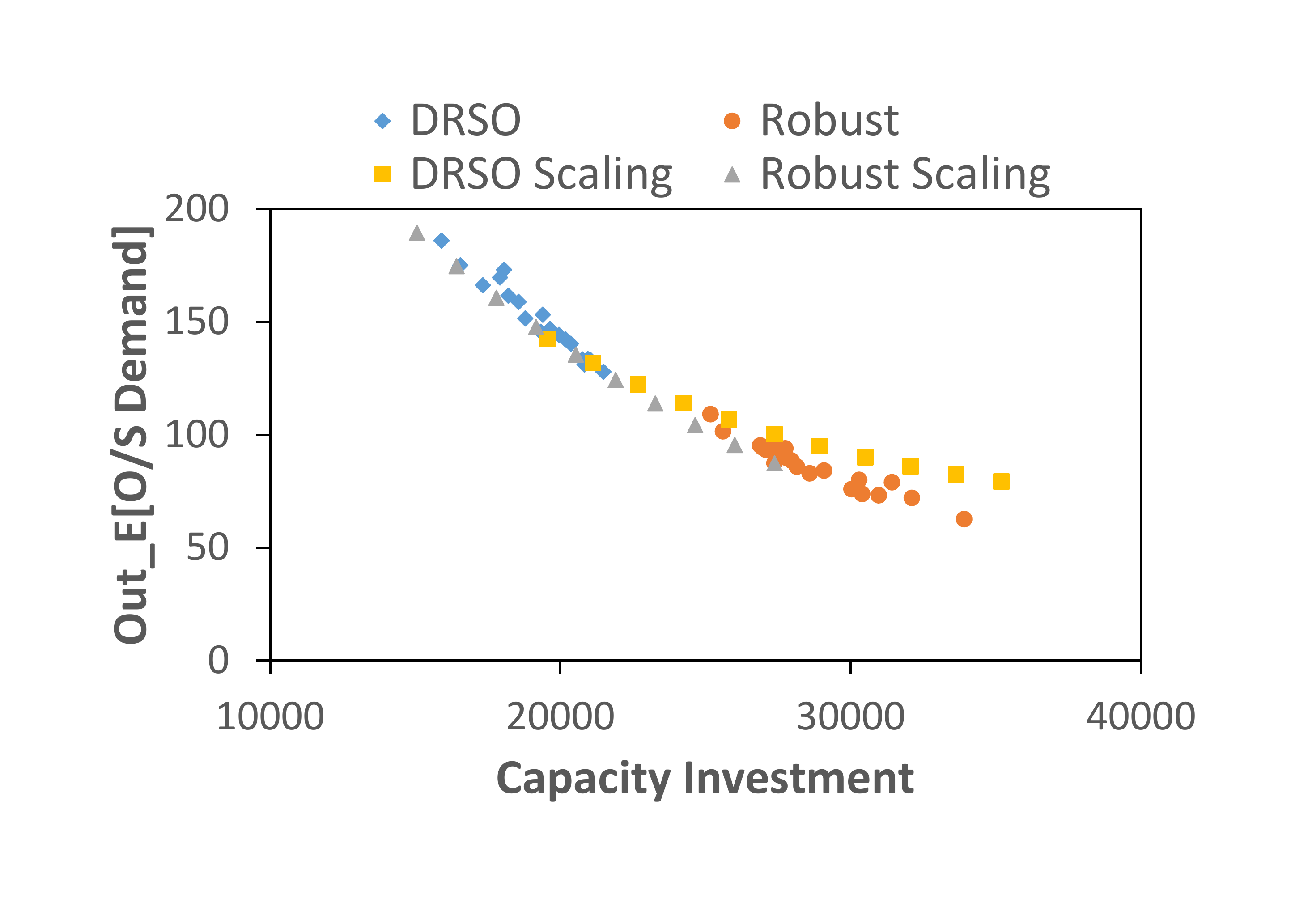}
			\caption{Expected unsatisfied demand.}\label{fig5Ba}
		\end{subfigure}
		\hspace{1cm}
		\begin{subfigure}{.45\textwidth}
			\includegraphics[width=1.1\linewidth]{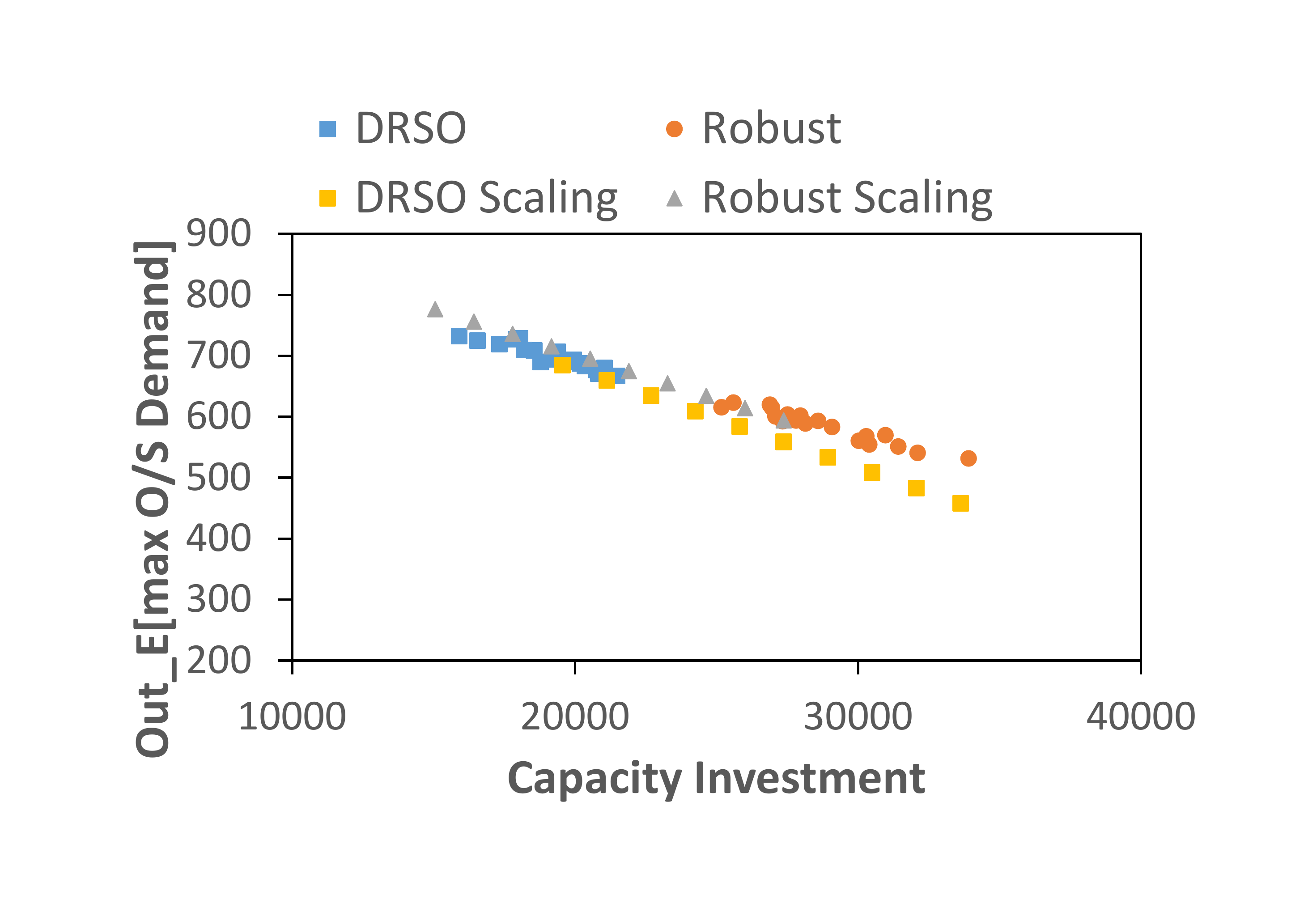}
			\caption{Expected maximum unsatisfied demand.}\label{fig5Bb}
		\end{subfigure}
		\vspace*{5mm}
		\begin{subfigure}{.45\textwidth}
			\includegraphics[width=1.1\linewidth]{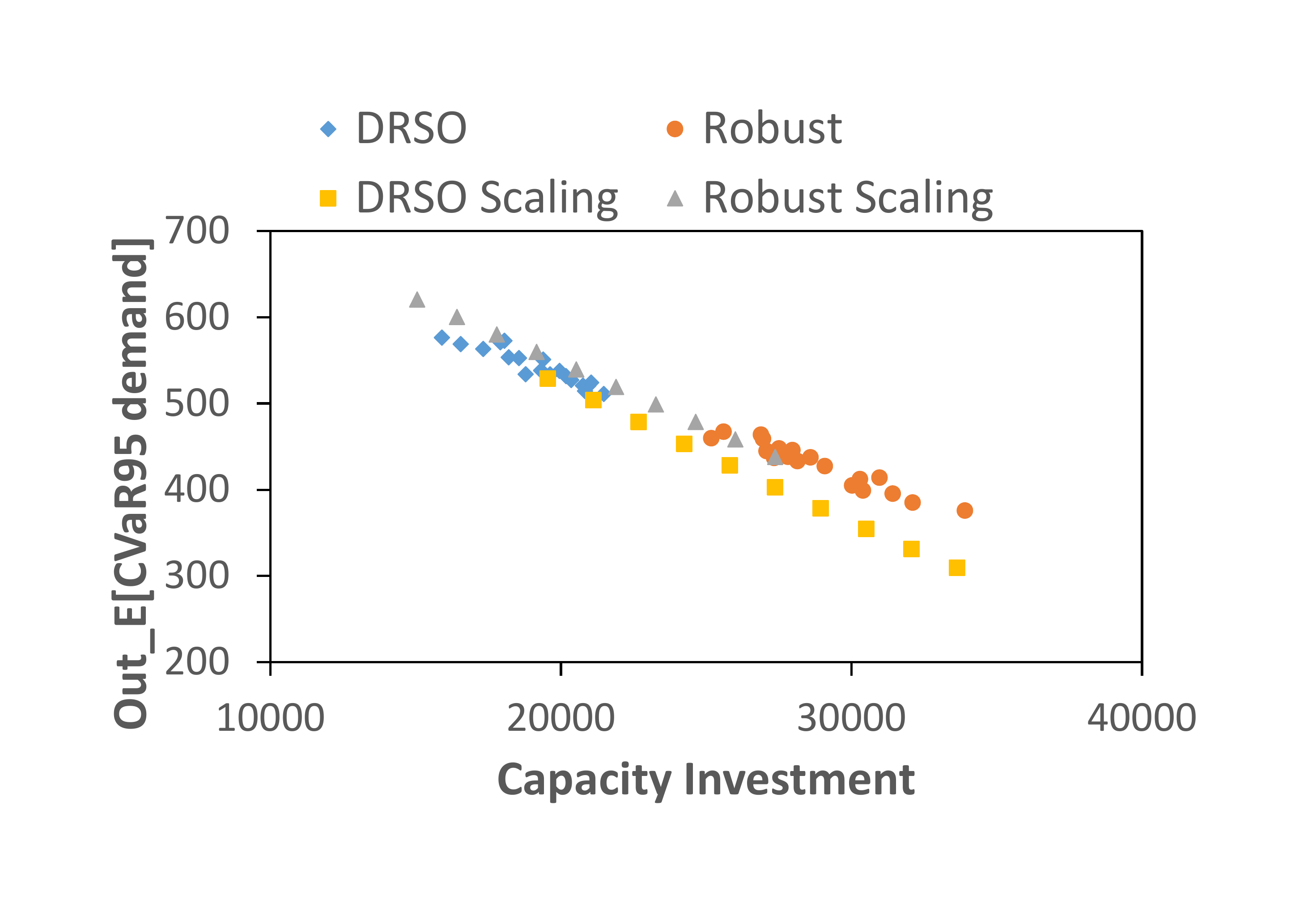}
			\caption{CVaR95 unsatisfied demand}\label{fig5Bc}
		\end{subfigure}
		\hspace{1cm}
		\begin{subfigure}{.45\textwidth}
			\includegraphics[width=1.1\linewidth]{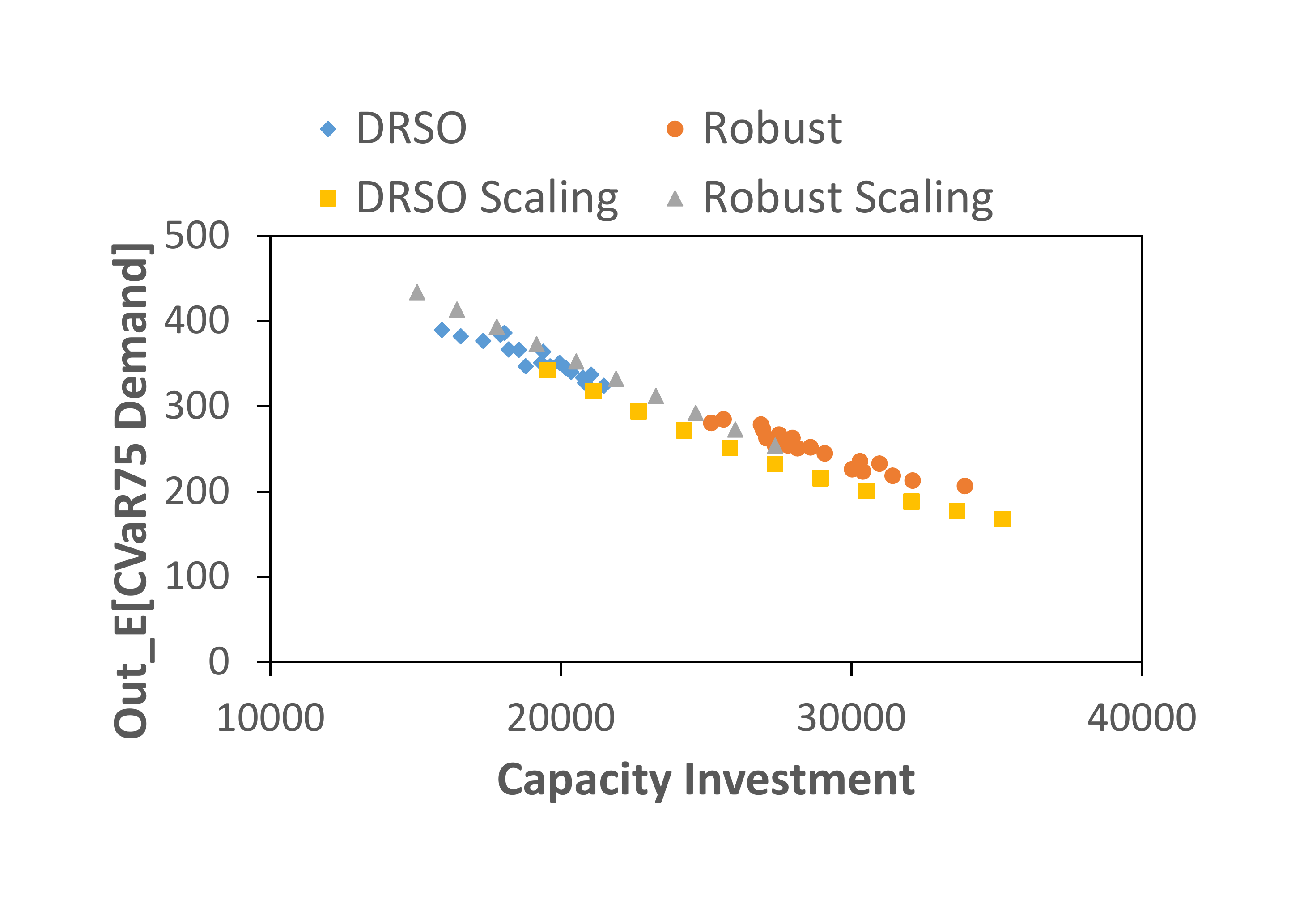}
			\caption{CVaR75 unsatisfied demand.}\label{fig5Bd}
		\end{subfigure}
		\caption{Performance metric scaling (commodity B).}\label{fig5B}
	\end{center}
\end{figure}

\begin{figure}[htbp]
	\begin{center}
		\begin{subfigure}{.45\textwidth}
			\includegraphics[width=1.1\linewidth]{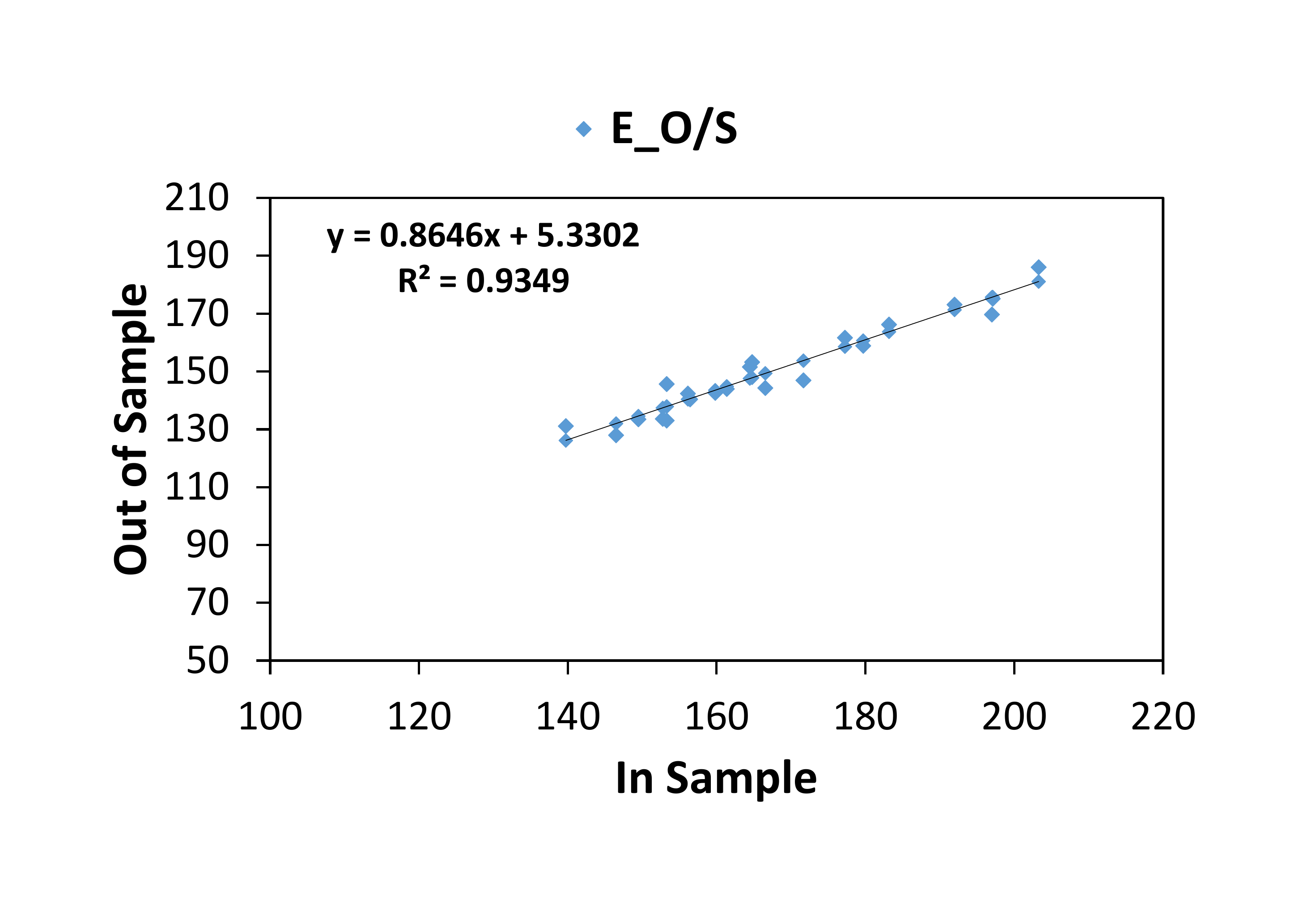}
			\caption{DRSO solutions, expected demand gap.}\label{fig2Ba}
		\end{subfigure}
		\hspace{1cm}
		\begin{subfigure}{.45\textwidth}
			\includegraphics[width=1.1\linewidth]{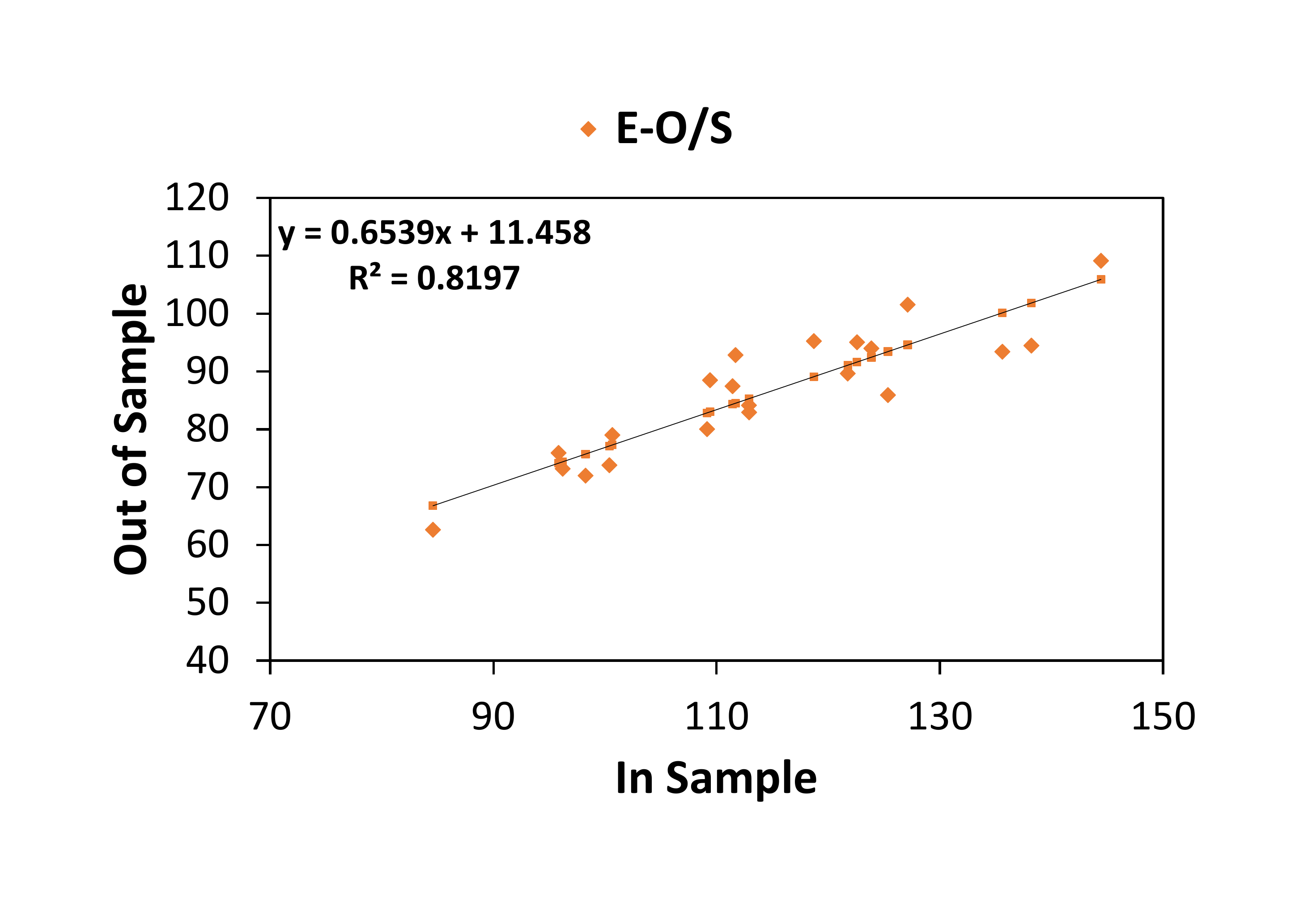}
			\caption{Robust solutions, expected demand gap.}\label{fig2Bb}
		\end{subfigure}
		\vspace*{5mm}
		\begin{subfigure}{.45\textwidth}
			\includegraphics[width=1.1\linewidth]{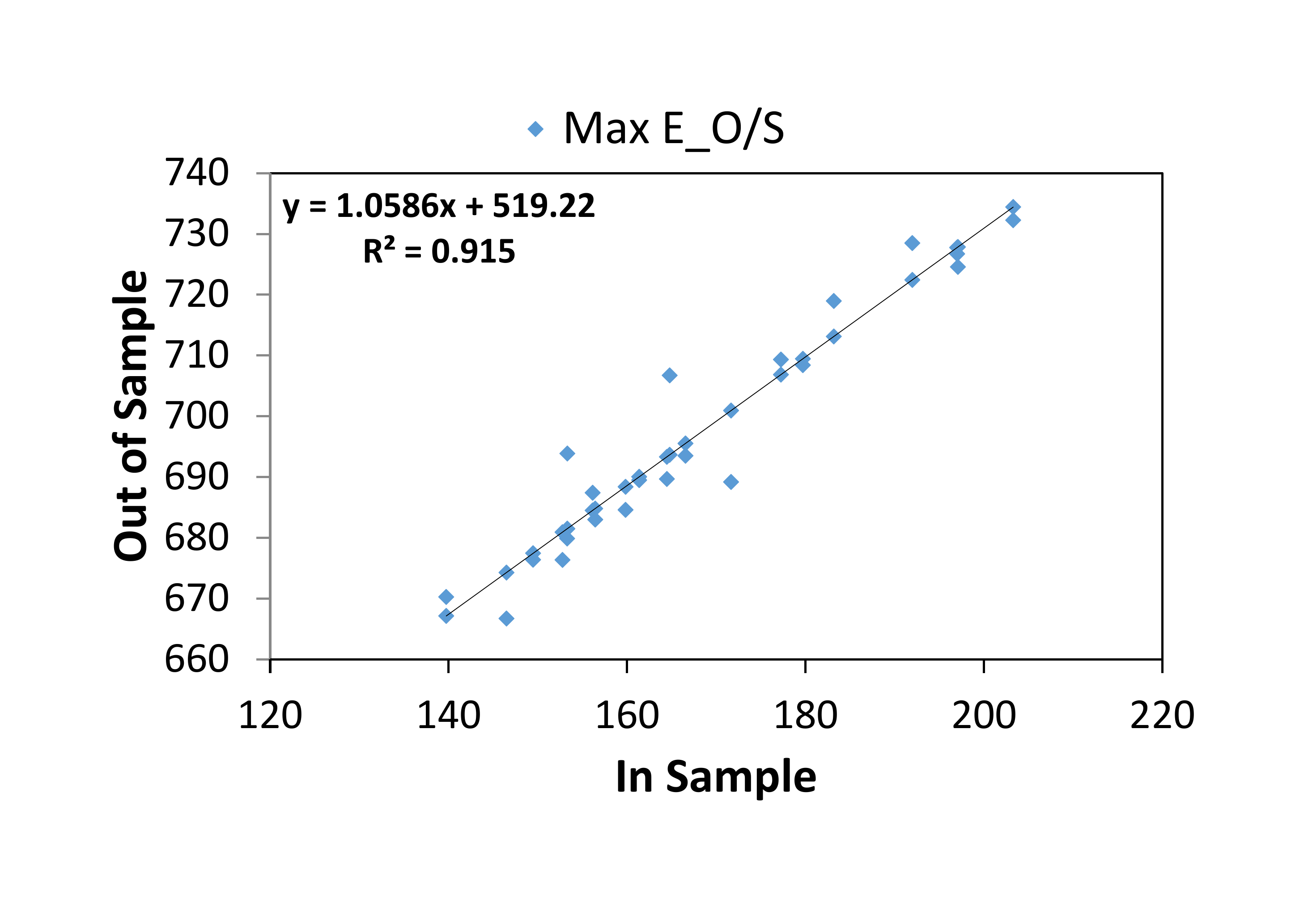}
			\caption{DRSO solutions, expected max demand gap.}\label{fig2Bc}
		\end{subfigure}
		\hspace{1cm}
		\begin{subfigure}{.45\textwidth}
			\includegraphics[width=1.1\linewidth]{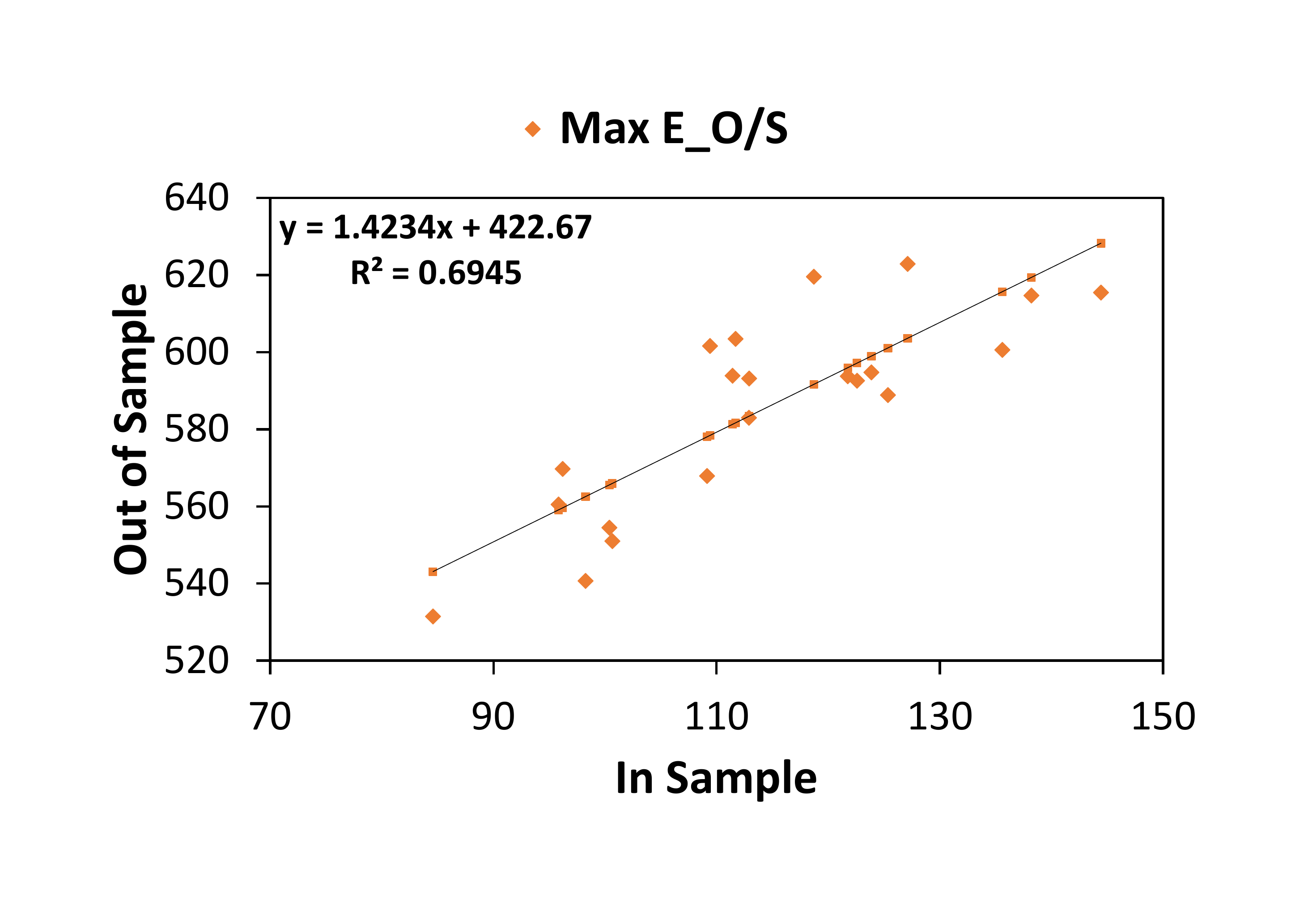}
			\caption{Robust solutions, expected max demand gap.}\label{fig2Bd}
		\end{subfigure}
		\vspace*{5mm}
		\begin{subfigure}{.45\textwidth}
			\includegraphics[width=1.1\linewidth]{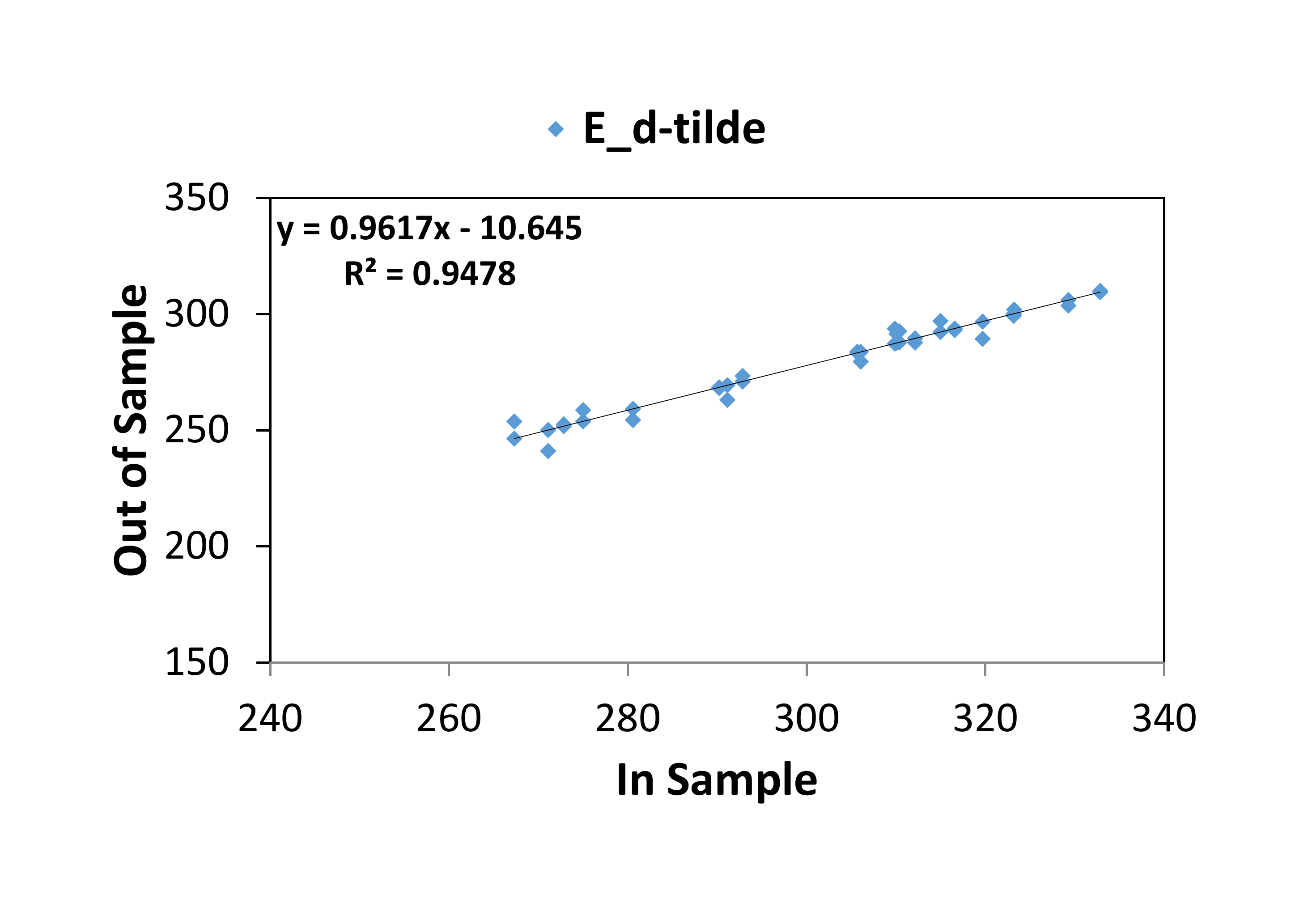}
			\caption{DRSO solutions, expected satisfied demand gap.}\label{fig2Be}
		\end{subfigure}
		\hspace{1cm}
		\caption{Results of out-of-sample prediction (commodity B).}\label{figB2}
	\end{center}
\end{figure}

Results indicate an overall similarity to the results reported in Section~\ref{sec:Exp}.

\section{Results for commodity type C.}
\label{AppendB}

We present additional results for commodity type C in a similar way to the presentation of results for commodity type A in the main text. \autoref{Tab5} and \autoref{Tab6} show key metrics for the 20 repetitions using the robust and the DRSO model, respectively.

\begin{table}[htbp]\footnotesize
	\centering
	%\tbl{Impact of Choice of k on the presorted original data.}
	\caption{Robust model results for commodity type C.}
	\begin{tabular}{rrcrcrrrr}%ccc}
		\toprule 
		&\multicolumn{2}{c}{In Sample} 
		&\multicolumn{5}{c}{Out of Sample}&\\	
		\cmidrule (r) {2-3} \cmidrule (l) {4-8}
		Inst. &    Cap. Inv.&	O/S Demand&	E[O/S]& E[Max O/S]&	CVaR95&	CVaR75&	E[$\tilde{d}$]&	CapAdd \\
		\midrule
		
		1	&	 26,956.58 	 & 	 179.10 	 & 	 136.73 	 & 	 684.56 	 & 	 529.60 	 & 	 339.64 	 & 	 276.26 	 & 	 647.25 	\\
		2	&	 32,347.16 	 & 	 161.78 	 & 	 130.64 	 & 	 634.36 	 & 	 479.54 	 & 	 296.79 	 & 	 279.20 	 & 	 794.22 	\\
		3	&	 30,172.88 	 & 	 149.01 	 & 	 119.10 	 & 	 659.10 	 & 	 504.14 	 & 	 313.73 	 & 	 282.00 	 & 	 725.67 	\\
		4	&	 28,334.37 	 & 	 172.10 	 & 	 137.63 	 & 	 667.46 	 & 	 512.50 	 & 	 325.54 	 & 	 273.43 	 & 	 686.66 	\\
		5	&	 35,453.89 	 & 	 120.88 	 & 	 98.17 	 & 	 594.81 	 & 	 439.86 	 & 	 257.32 	 & 	 320.85 	 & 	 851.52 	\\
		6	&	 30,982.87 	 & 	 152.36 	 & 	 119.19 	 & 	 664.96 	 & 	 510.01 	 & 	 320.93 	 & 	 293.67 	 & 	 749.94 	\\
		7	&	 25,591.28 	 & 	 195.04 	 & 	 159.70 	 & 	 690.12 	 & 	 535.33 	 & 	 353.75 	 & 	 264.98 	 & 	 616.11 	\\
		8	&	 25,347.46 	 & 	 209.66 	 & 	 156.05 	 & 	 706.55 	 & 	 551.59 	 & 	 363.42 	 & 	 246.95 	 & 	 609.26 	\\
		9	&	 30,205.32 	 & 	 184.27 	 & 	 132.04 	 & 	 651.36 	 & 	 496.41 	 & 	 317.62 	 & 	 275.19 	 & 	 727.79 	\\
		10	&	 27,130.79 	 & 	 166.38 	 & 	 143.88 	 & 	 698.51 	 & 	 543.55 	 & 	 353.33 	 & 	 266.32 	 & 	 656.16 	\\
		11	&	 29,966.64 	 & 	 150.23 	 & 	 126.10 	 & 	 649.42 	 & 	 494.47 	 & 	 309.78 	 & 	 285.65 	 & 	 722.97 	\\
		12	&	 30,664.61 	 & 	 146.39 	 & 	 123.74 	 & 	 648.96 	 & 	 494.00 	 & 	 305.39 	 & 	 291.40 	 & 	 742.28 	\\
		13	&	 30,595.65 	 & 	 154.07 	 & 	 129.73 	 & 	 641.56 	 & 	 486.60 	 & 	 302.37 	 & 	 296.50 	 & 	 739.28 	\\
		14	&	 32,529.85 	 & 	 149.01 	 & 	 109.24 	 & 	 628.82 	 & 	 475.08 	 & 	 292.77 	 & 	 310.60 	 & 	 781.49 	\\
		15	&	 28,710.56 	 & 	 150.83 	 & 	 129.56 	 & 	 674.99 	 & 	 520.03 	 & 	 329.21 	 & 	 279.38 	 & 	 692.88 	\\
		16	&	 28,970.35 	 & 	 167.93 	 & 	 133.26 	 & 	 655.04 	 & 	 500.08 	 & 	 316.23 	 & 	 308.97 	 & 	 703.85 	\\
		17	&	 29,124.96 	 & 	 169.75 	 & 	 134.52 	 & 	 658.81 	 & 	 504.96 	 & 	 323.99 	 & 	 283.71 	 & 	 698.58 	\\
		18	&	 24,216.81 	 & 	 191.10 	 & 	 163.21 	 & 	 717.45 	 & 	 562.50 	 & 	 371.64 	 & 	 249.18 	 & 	 590.91 	\\
		19	&	 31,721.36 	 & 	 171.99 	 & 	 119.38 	 & 	 627.87 	 & 	 472.94 	 & 	 292.73 	 & 	 281.83 	 & 	 759.90 	\\
		20	&	 25,506.27 	 & 	 189.19 	 & 	 145.45 	 & 	 706.33 	 & 	 551.37 	 & 	 360.61 	 & 	 267.52 	 & 	 610.99 	\\
		\bottomrule
	\end{tabular}
	%\caption{Experimental setup for generating 120 problem instances for each network.}
	\label{Tab5}
\end{table}

\begin{table}[htbp]\footnotesize
	\centering
	%\tbl{Impact of Choice of k on the presorted original data.}
	\caption{DRSO model results for commodity type C.}
	\begin{tabular}{rrrcrcrrrr}%ccc}
		\toprule 
		&&\multicolumn{2}{l}{In Sample} 
		&\multicolumn{5}{c}{Out of Sample}&\\ 
		\cmidrule (r) {2-4} \cmidrule (l) {5-9}
		Inst. &    Cap. Inv.& Nature& $\tilde{d}$ &E[O/S]& E[ Max O/S]& CVaR95& CVaR75& E[$\tilde{d}$]& CapAdd \\
		\midrule
		1	&	 12,363.99 	 & 	 268.29 	 & 	 152.98 	 & 	258.35	 & 	 842.53 	 & 	 687.58 	 & 	 496.72 	 & 	146.54	 & 	 294.04 	\\
		2	&	 5,571.67 	 & 	 345.44 	 & 	 59.95 	 & 	332.11	 & 	 929.18 	 & 	 774.23 	 & 	 583.37 	 & 	65.39	 & 	 136.01 	\\
		3	&	 13,259.41 	 & 	 259.92 	 & 	 165.58 	 & 	247.00	 & 	 829.93 	 & 	 674.97 	 & 	 484.12 	 & 	157.41	 & 	 321.55 	\\
		4	&	 7,897.34 	 & 	 309.18 	 & 	 105.22 	 & 	301.23	 & 	 888.37 	 & 	 733.42 	 & 	 542.56 	 & 	101.94	 & 	 186.84 	\\
		5	&	 12,396.05 	 & 	 285.00 	 & 	 146.38 	 & 	258.36	 & 	 839.87 	 & 	 684.91 	 & 	 494.06 	 & 	142.81	 & 	 299.13 	\\
		6	&	 11,576.99 	 & 	 263.34 	 & 	 151.77 	 & 	261.03	 & 	 843.09 	 & 	 688.14 	 & 	 497.28 	 & 	147.41	 & 	 283.31 	\\
		7	&	 11,152.00 	 & 	 270.66 	 & 	 148.23 	 & 	267.12	 & 	 847.28 	 & 	 692.33 	 & 	 501.47 	 & 	136.14	 & 	 266.95 	\\
		8	&	 14,153.71 	 & 	 249.28 	 & 	 174.53 	 & 	237.78	 & 	 820.98 	 & 	 666.03 	 & 	 475.17 	 & 	162.31	 & 	 344.49 	\\
		9	&	 9,627.59 	 & 	 282.76 	 & 	 134.52 	 & 	276.63	 & 	 858.54 	 & 	 703.58 	 & 	 512.73 	 & 	129.57	 & 	 234.24 	\\
		10	&	 10,393.15 	 & 	 287.02 	 & 	 126.55 	 & 	272.23	 & 	 861.08 	 & 	 706.13 	 & 	 515.27 	 & 	126.03	 & 	 249.93 	\\
		11	&	 10,305.45 	 & 	 284.76 	 & 	 131.37 	 & 	275.49	 & 	 861.32 	 & 	 706.36 	 & 	 515.51 	 & 	130.22	 & 	 252.57 	\\
		12	&	 8,650.82 	 & 	 311.90 	 & 	 105.36 	 & 	295.17	 & 	 882.67 	 & 	 727.72 	 & 	 536.86 	 & 	106.36	 & 	 214.13 	\\
		13	&	 14,760.43 	 & 	 242.42 	 & 	 189.52 	 & 	229.55	 & 	 805.94 	 & 	 650.98 	 & 	 460.13 	 & 	178.94	 & 	 355.27 	\\
		14	&	 10,207.56 	 & 	 295.88 	 & 	 132.09 	 & 	276.41	 & 	 860.33 	 & 	 705.37 	 & 	 514.52 	 & 	125.09	 & 	 248.92 	\\
		15	&	 15,725.45 	 & 	 240.47 	 & 	 176.62 	 & 	231.30	 & 	 813.68 	 & 	 658.73 	 & 	 467.87 	 & 	173.52	 & 	 381.58 	\\
		16	&	 12,809.80 	 & 	 261.85 	 & 	 160.46 	 & 	253.39	 & 	 835.05 	 & 	 680.10 	 & 	 489.24 	 & 	154.27	 & 	 312.61 	\\
		17	&	 14,626.78 	 & 	 243.12 	 & 	 186.55 	 & 	230.40	 & 	 808.96 	 & 	 654.00 	 & 	 463.15 	 & 	171.96	 & 	 353.18 	\\
		18	&	 13,876.73 	 & 	 245.18 	 & 	 173.35 	 & 	241.13	 & 	 822.12 	 & 	 667.17 	 & 	 476.31 	 & 	159.20	 & 	 336.01 	\\
		19	&	 13,628.69 	 & 	 262.00 	 & 	 175.23 	 & 	236.30	 & 	 815.98 	 & 	 661.03 	 & 	 470.17 	 & 	165.27	 & 	 327.33 	\\
		20	&	 11,700.59 	 & 	 261.69 	 & 	 158.25 	 & 	254.83	 & 	 836.88 	 & 	 681.93 	 & 	 491.07 	 & 	150.23	 & 	 272.91 	\\
		
		\bottomrule
	\end{tabular}
	%\caption{Experimental setup for generating 120 problem instances for each network.}
	\label{Tab6}
\end{table}

\autoref{fig6}, \autoref{fig9} and \autoref{fig7} 
correspond to \autoref{fig1}, \autoref{fig5} and \autoref{fig2} using commodity type B instead of A.

\begin{figure}[htbp]
	\begin{center}
		\begin{subfigure}{.45\textwidth}
			\includegraphics[width=1.1\linewidth]{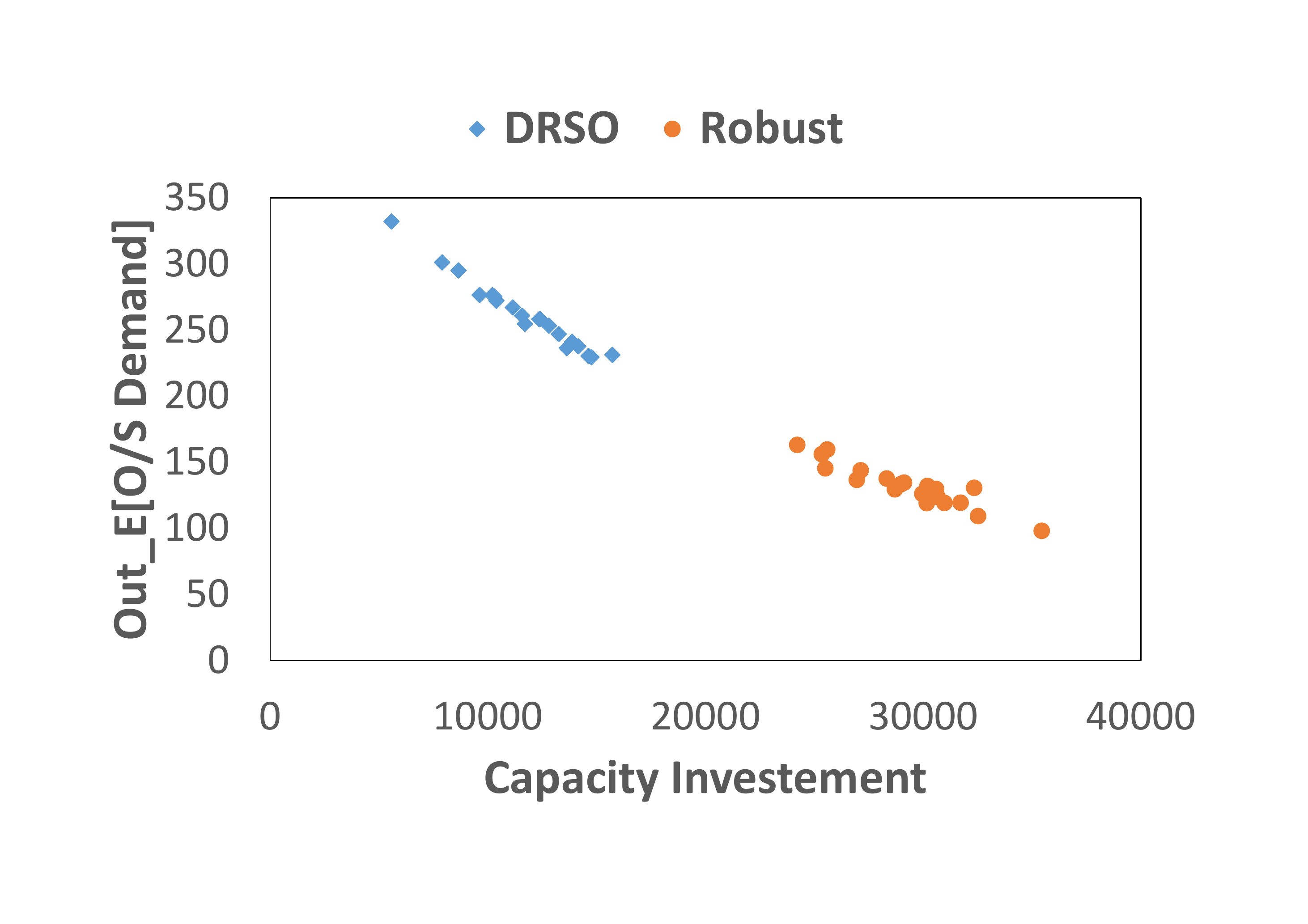}
			\caption{Expected unsatisfied demand.}\label{fig6a}
		\end{subfigure}
		\hspace{1cm}
		\begin{subfigure}{.45\textwidth}
			\includegraphics[width=1.1\linewidth]{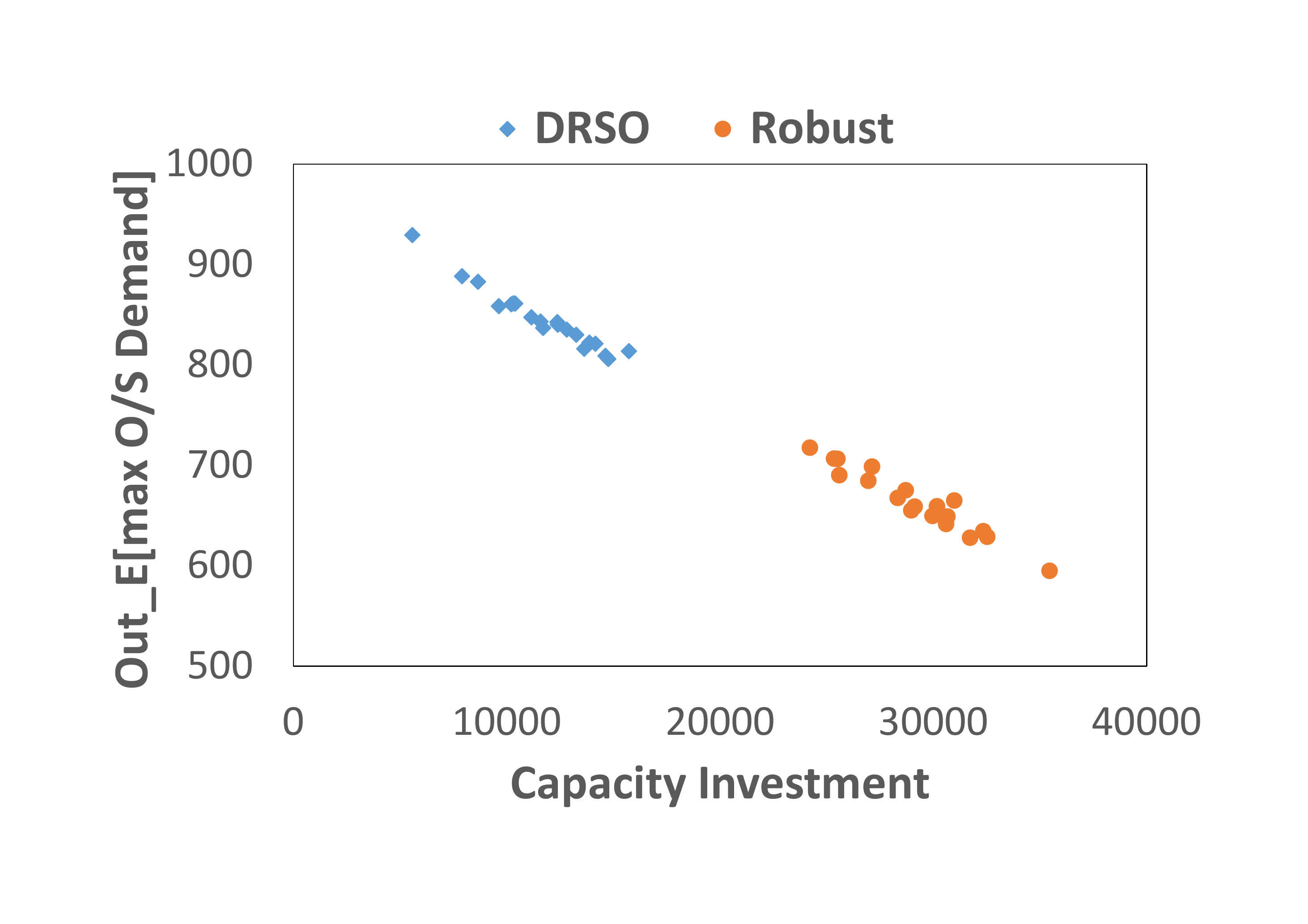}
			\caption{Expected max unsatisfied demand.}\label{fig6b}
		\end{subfigure}
		\vspace*{5mm}
		\begin{subfigure}{.45\textwidth}
			\includegraphics[width=1.1\linewidth]{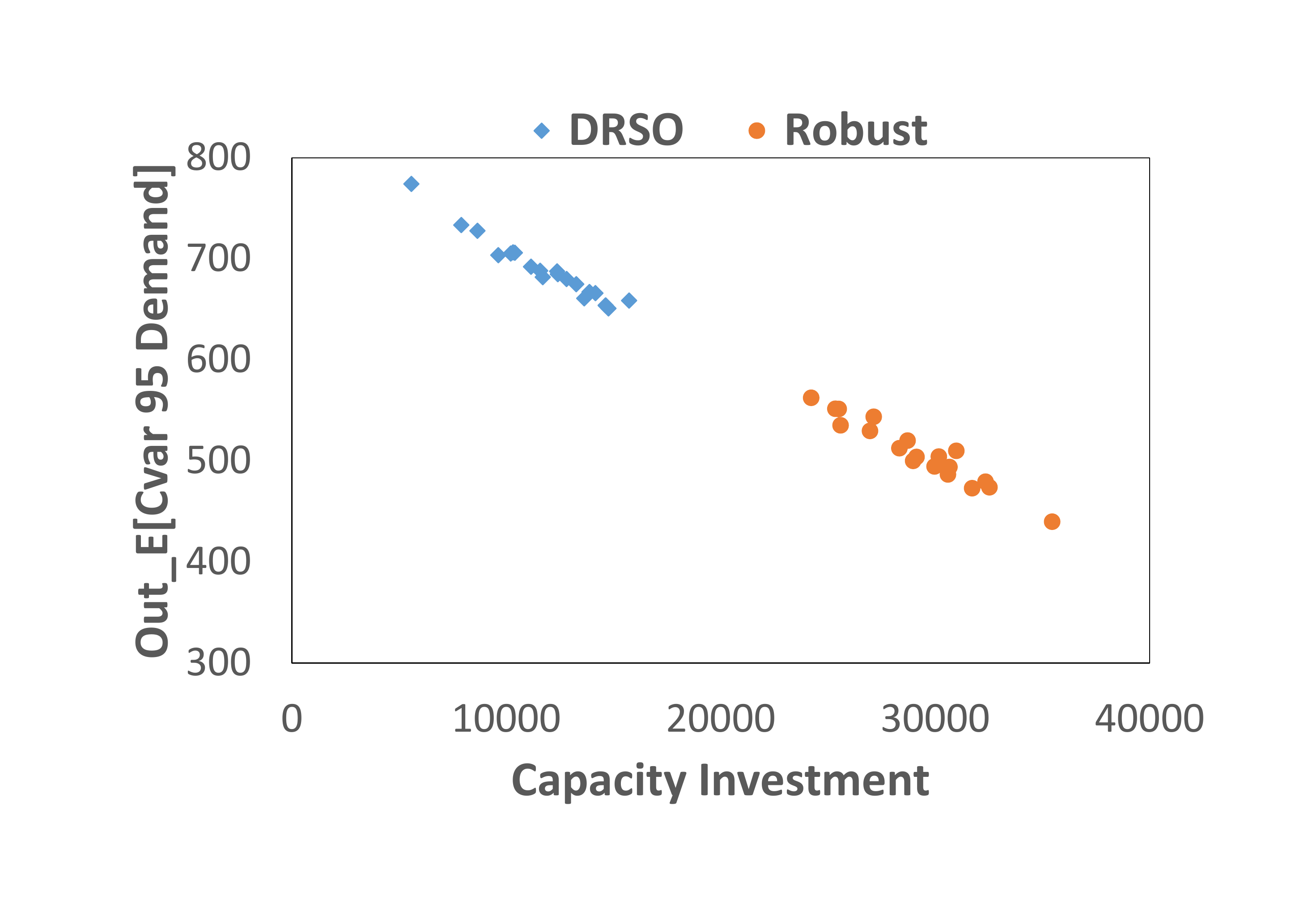}
			\caption{CVaR95 unsatisfied demand.}\label{fig6c}
		\end{subfigure}
		\hspace{1cm}
		\begin{subfigure}{.45\textwidth}
			\includegraphics[width=1.1\linewidth]{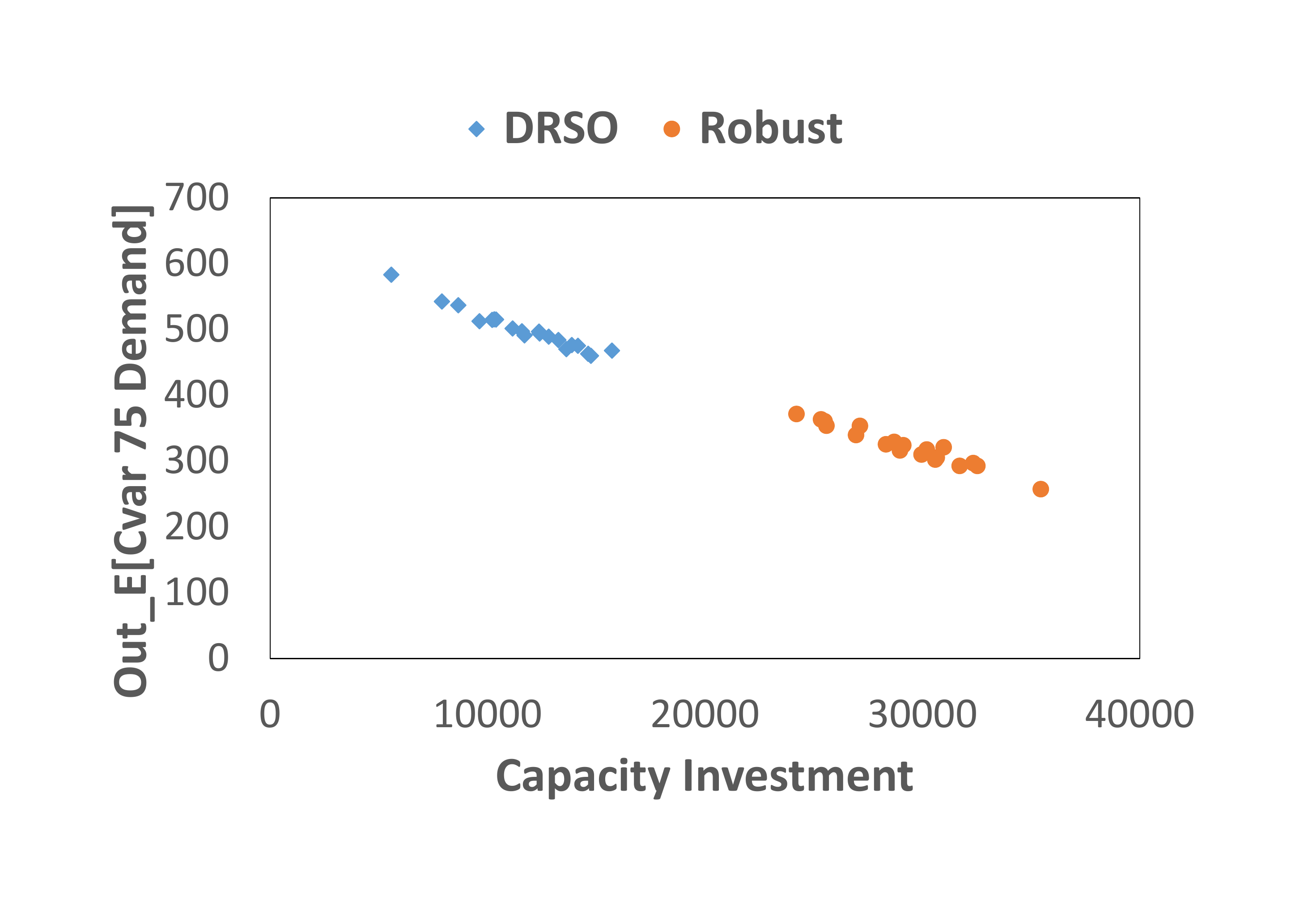}
			\caption{CVaR75 unsatisfied demand.}\label{fig6d}
		\end{subfigure}
		\caption{Expected unsatisfied demand mean and risk measures (commodity C).}\label{fig6}
	\end{center}
\end{figure}	

\begin{figure}[htbp]
	\begin{center}
		\begin{subfigure}{.45\textwidth}
			\includegraphics[width=1.1\linewidth]{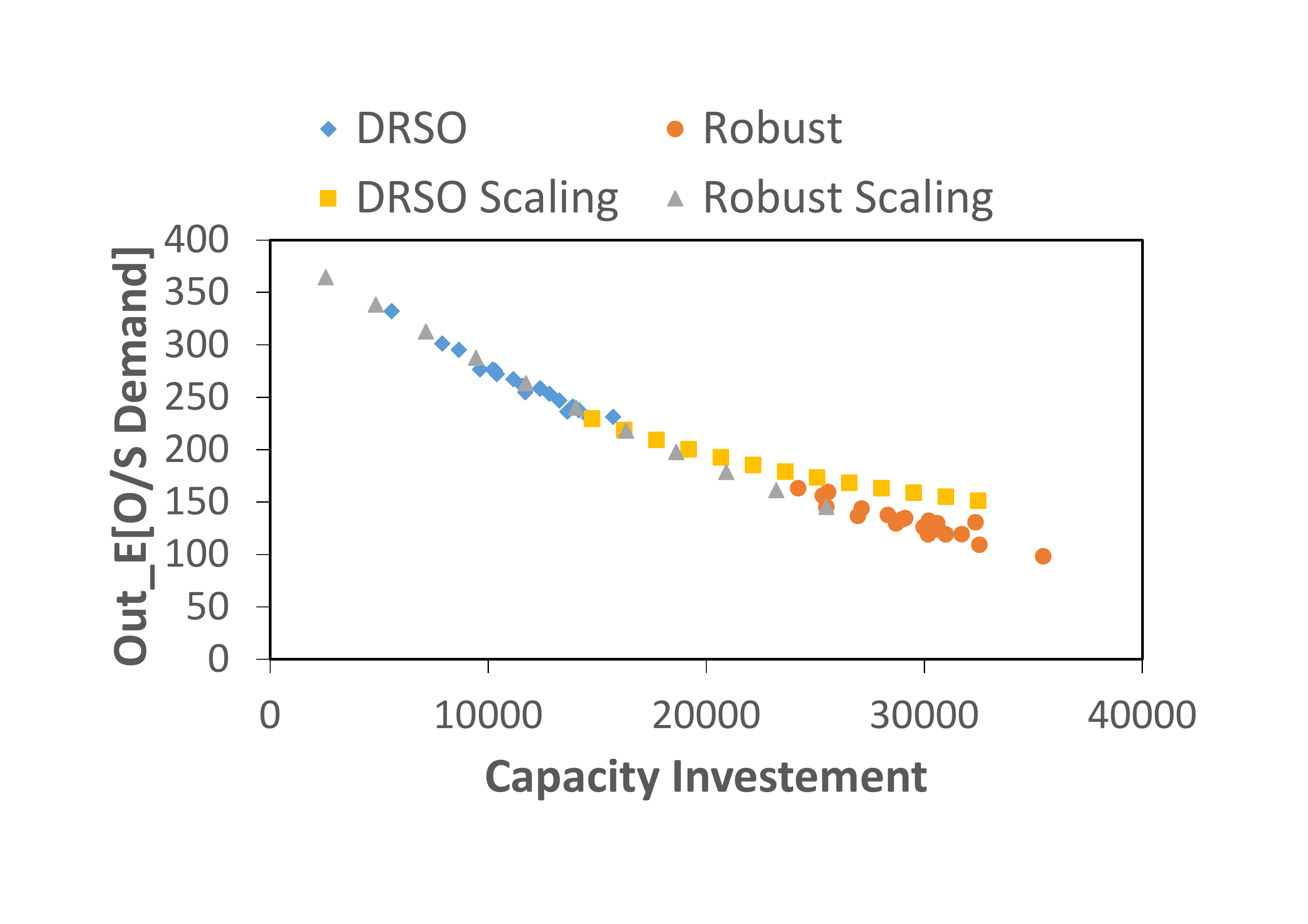}
			\caption{Expected unsatisfied demand.}\label{fig9a}
		\end{subfigure}
		\hspace{1cm}
		\begin{subfigure}{.45\textwidth}
			\includegraphics[width=1.1\linewidth]{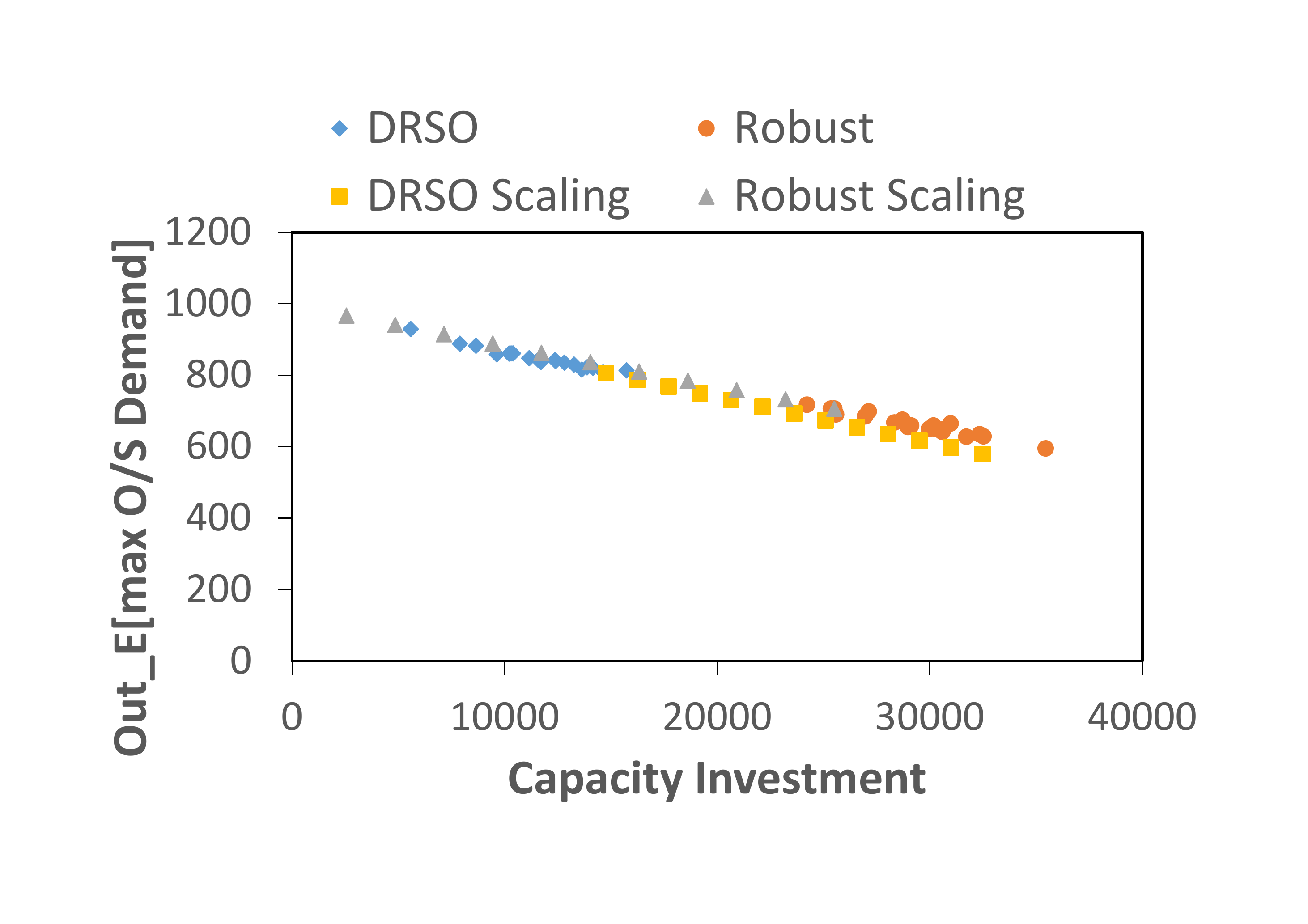}
			\caption{Expected maximum unsatisfied demand.}\label{fig9b}
		\end{subfigure}
		\vspace*{5mm}
		\begin{subfigure}{.45\textwidth}
			\includegraphics[width=1.1\linewidth]{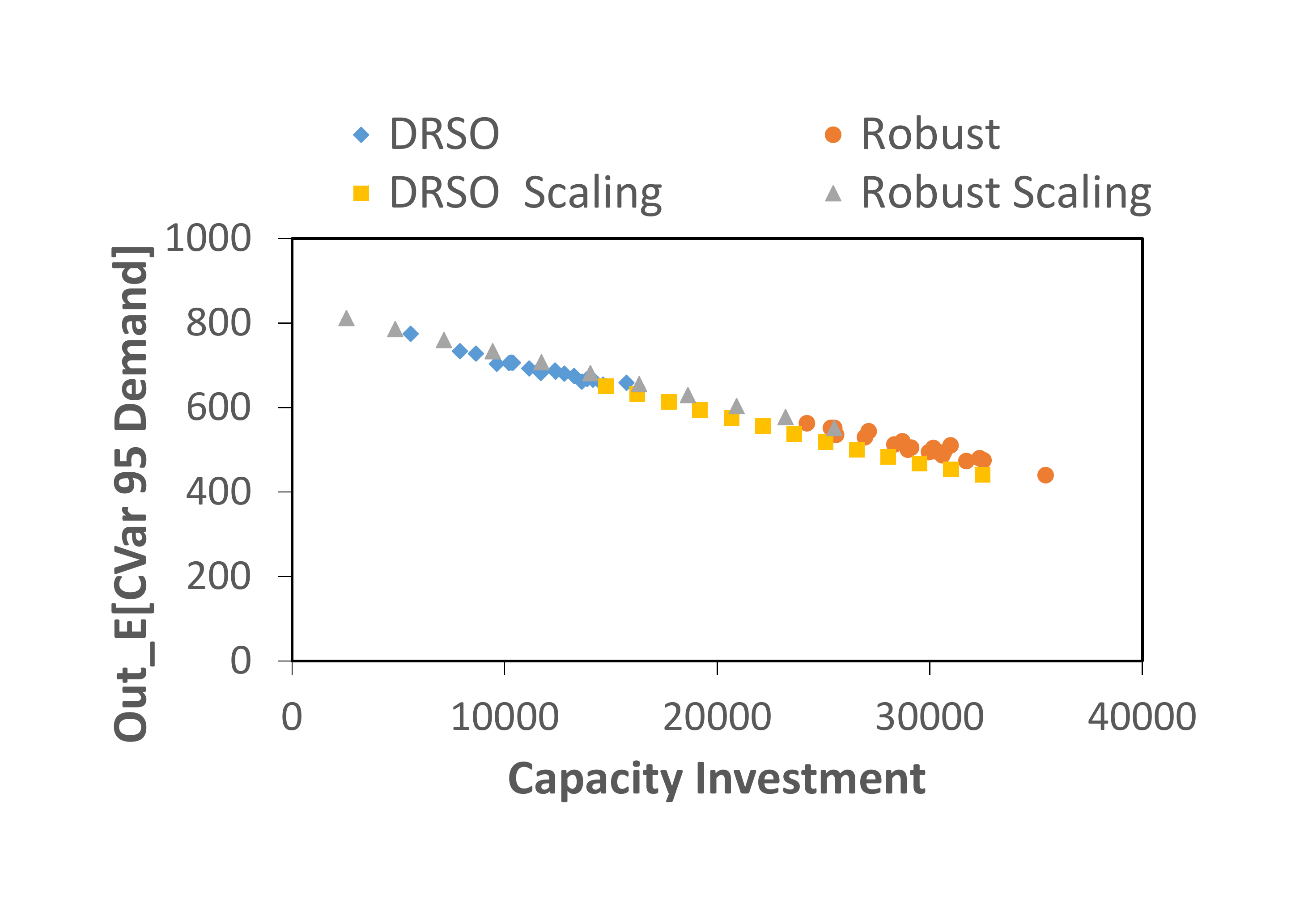}
			\caption{CVaR95 unsatisfied demand}\label{fig9c}
		\end{subfigure}
		\hspace{1cm}
		\begin{subfigure}{.45\textwidth}
			\includegraphics[width=1.1\linewidth]{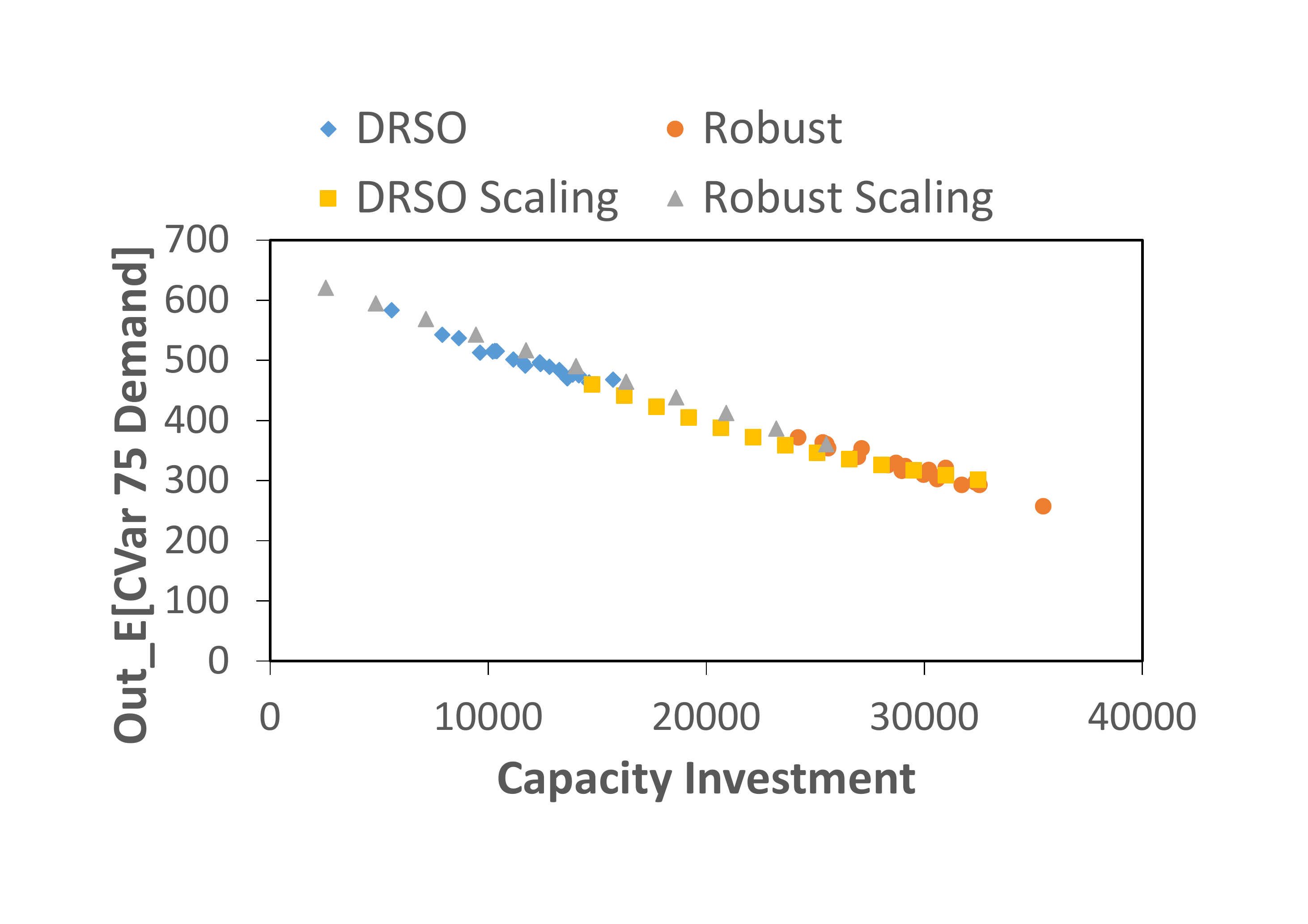}
			\caption{CVaR75 Unsatisfied Demand.}\label{fig9d}
		\end{subfigure}
		\caption{Performance metric scaling (commodity C).}\label{fig9}
	\end{center}
\end{figure}

%	\vspace*{5mm}
\begin{figure}[htbp]
	\begin{center}
		\begin{subfigure}{.45\textwidth}
			\includegraphics[width=1.1\linewidth]{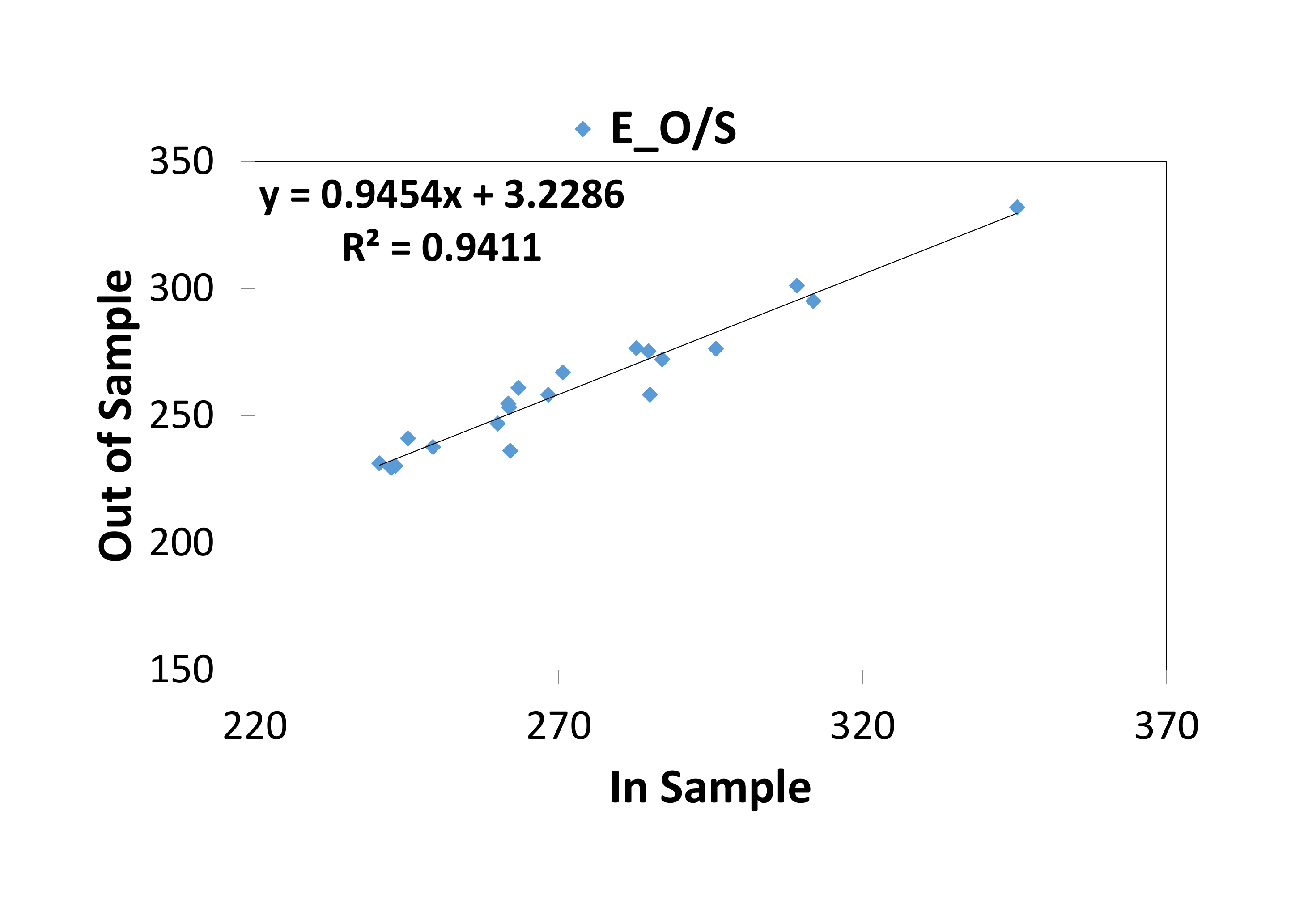}
			\caption{DRSO solutions, expected demand gap.}\label{fig7a}
		\end{subfigure}
		\hspace{1cm}
		\begin{subfigure}{.45\textwidth}
			\includegraphics[width=1.1\linewidth]{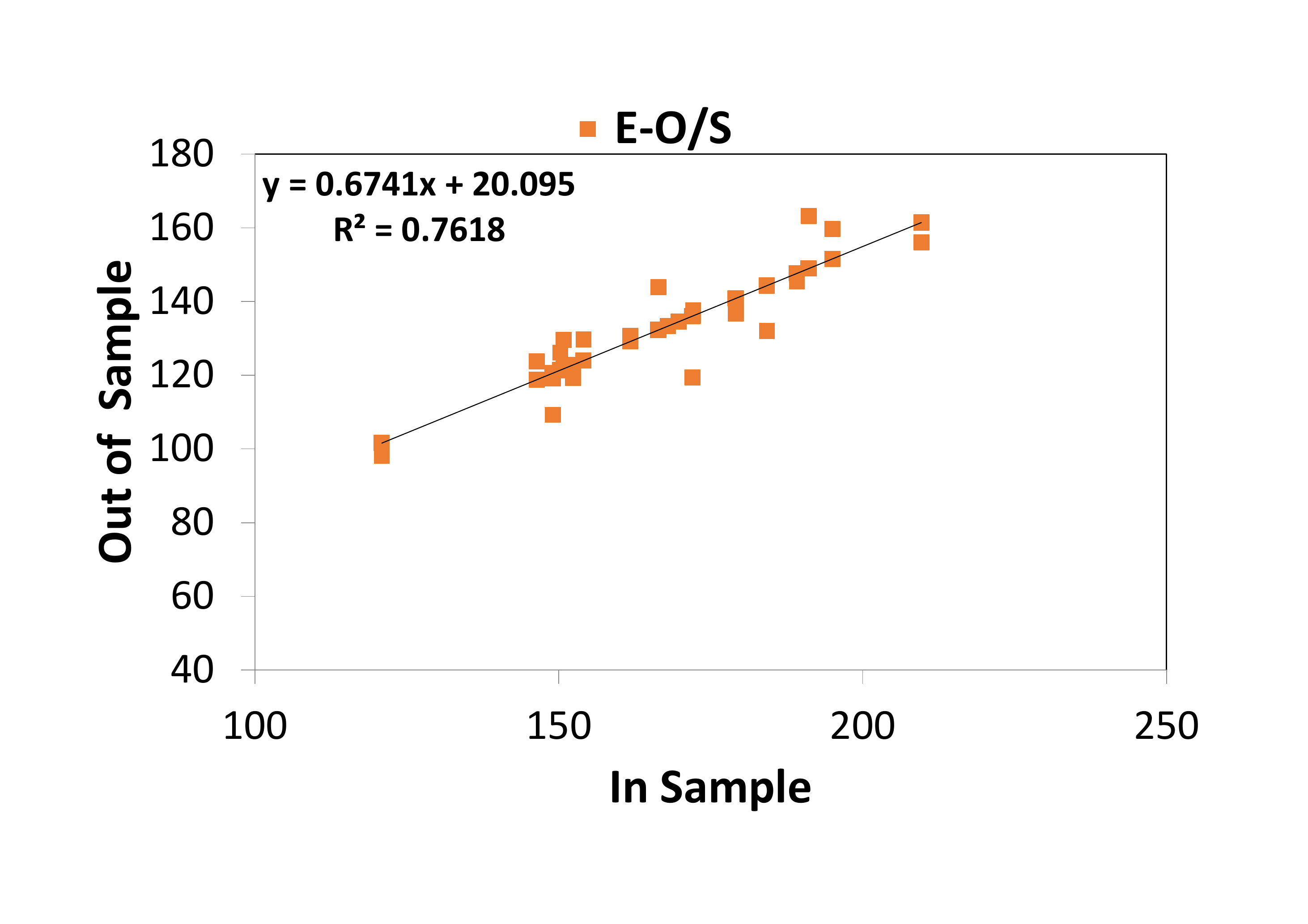}
			\caption{Robust solutions, expected demand gap.}\label{fig7b}
		\end{subfigure}
		\vspace*{5mm}
		\begin{subfigure}{.45\textwidth}
			\includegraphics[width=1.1\linewidth]{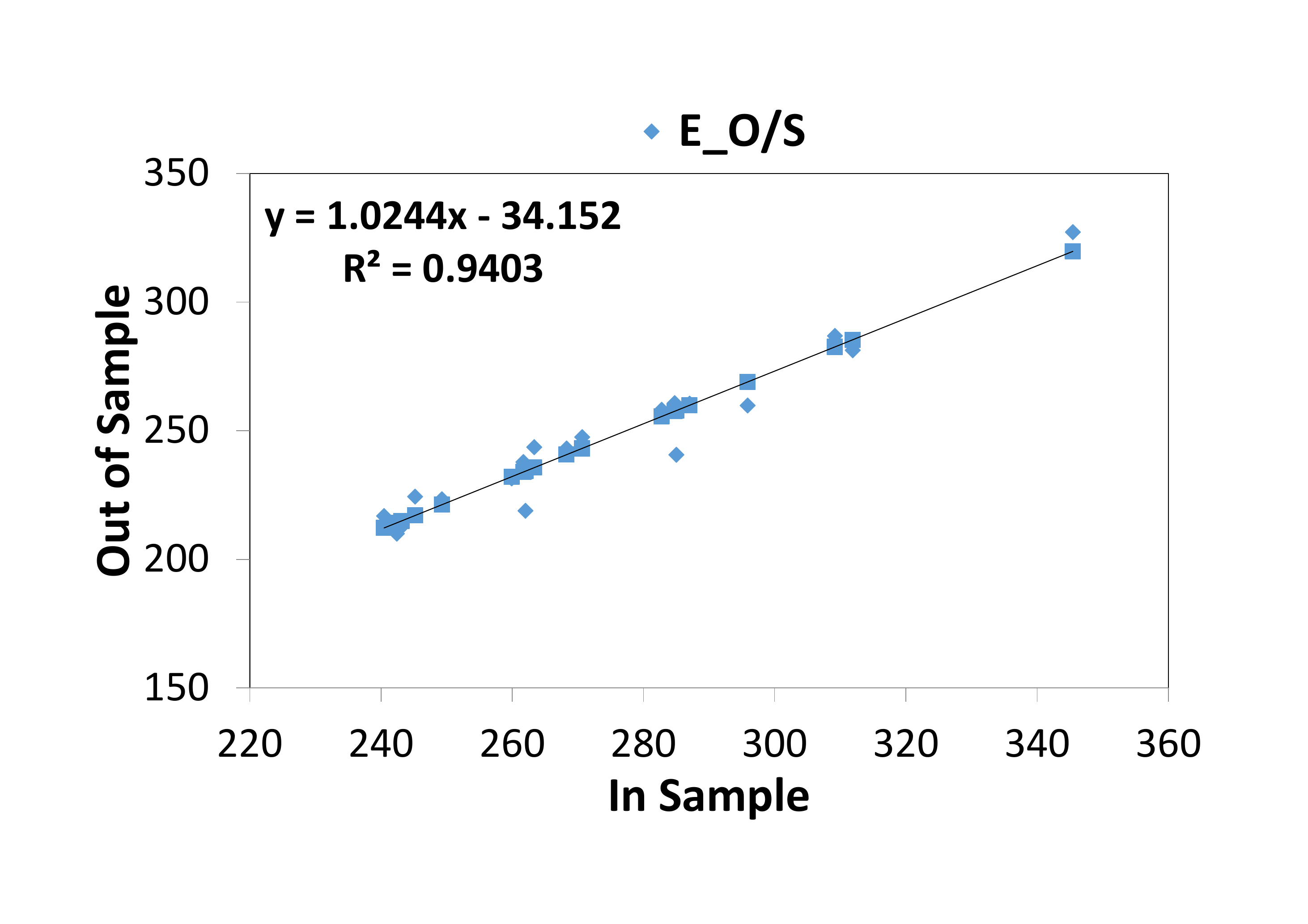}
			\caption{DRSO solutions, expected max demand gap.}\label{fig7c}
		\end{subfigure}
		\hspace{1cm}
		\begin{subfigure}{.45\textwidth}
			\includegraphics[width=1.1\linewidth]{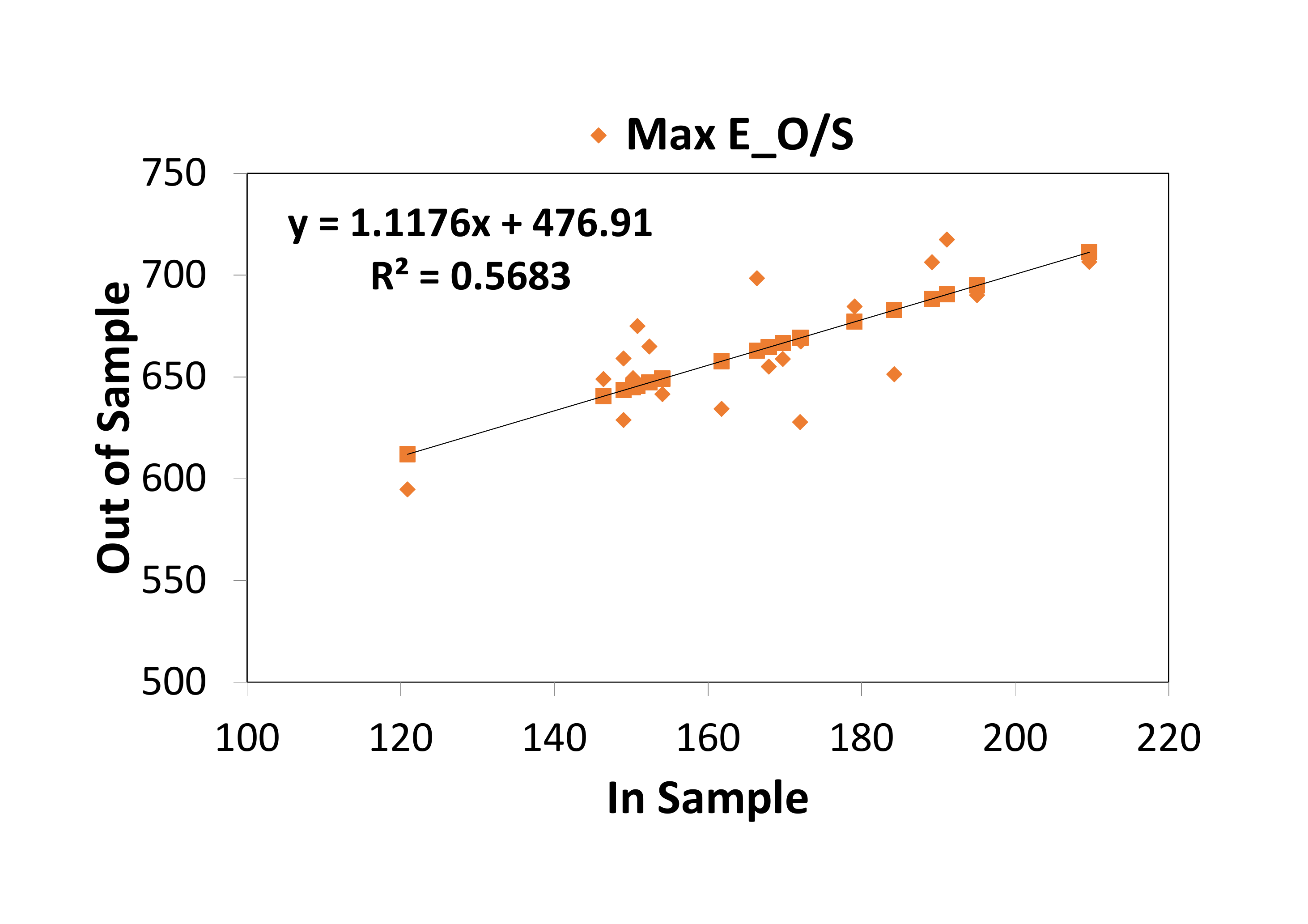}
			\caption{Robust solutions, expected max demand gap.}\label{fig7d}
		\end{subfigure}
		\vspace*{5mm}
		\begin{subfigure}{.45\textwidth}
			\includegraphics[width=1.1\linewidth]{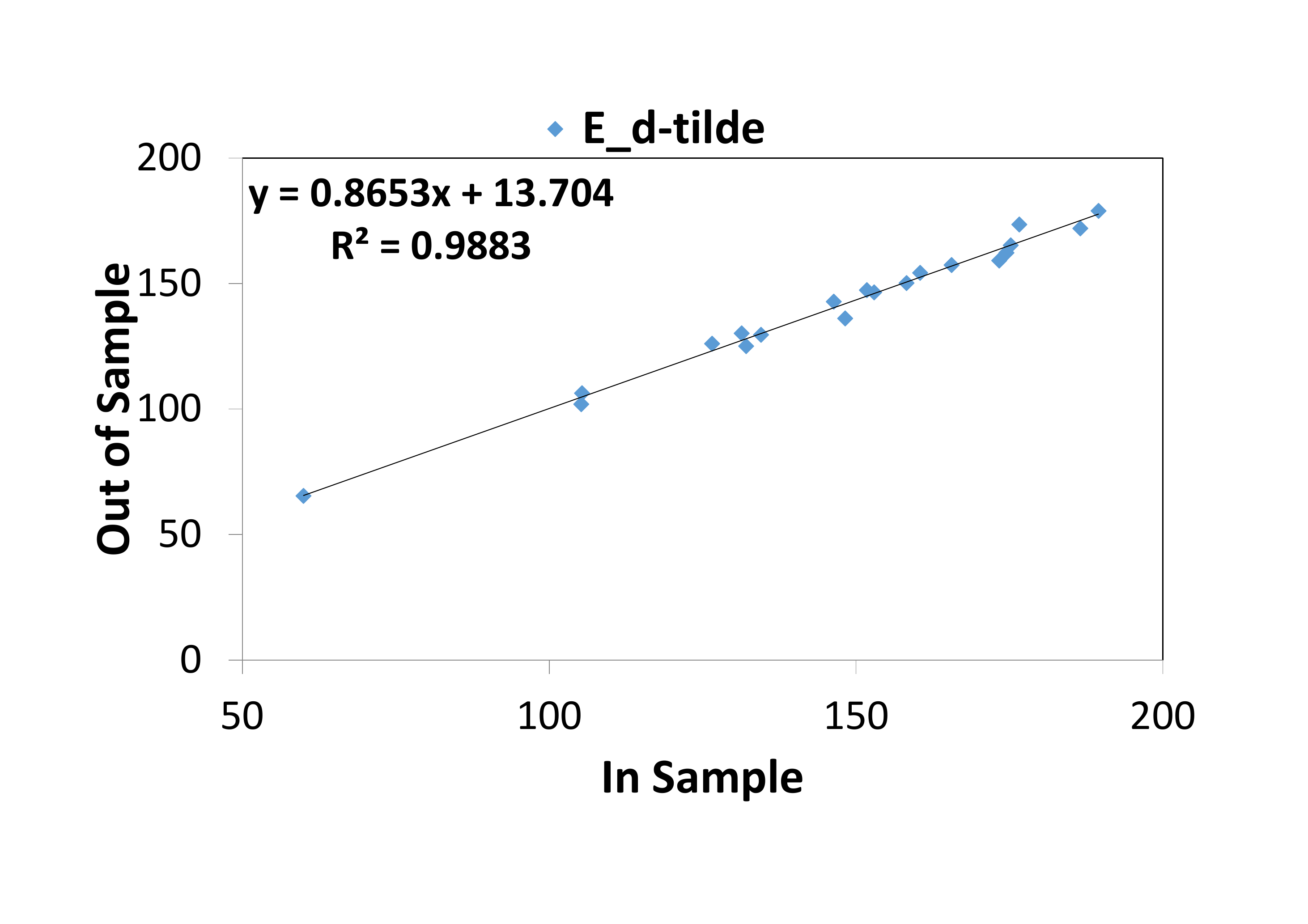}
			\caption{DRSO solutions, expected satisfied demand gap.}\label{fig7e}
		\end{subfigure}
		\hspace{1cm}
		\caption{Results of out-of-sample prediction (commodity C).}\label{fig7}
	\end{center}
\end{figure}

\end{appendices}

\end{document}